\newcommand{\bA}{\mathbb{A}}
\newcommand{\bE}{\mathbb{E}}
\newcommand{\bG}{\mathbb{G}}
\newcommand{\bL}{\mathbb{L}}
\newcommand{\bN}{\mathbb{N}}
\newcommand{\bP}{\mathbb{P}}
\newcommand{\bZ}{\mathbb{Z}}
\newcommand{\oH}{\operatorname{H}}
\newcommand{\cB}{\mathcal{B}}
\newcommand{\cC}{\mathcal{C}}
\newcommand{\cD}{\mathcal{D}}
\newcommand{\cE}{\mathcal{E}}
\newcommand{\cG}{\mathcal{G}}
\newcommand{\cH}{\mathcal{H}}
\newcommand{\cI}{\mathcal{I}}
\newcommand{\cJ}{\mathcal{J}}
\newcommand{\cL}{\mathcal{L}}
\newcommand{\cN}{\mathcal{N}}
\newcommand{\cO}{\mathcal{O}}
\newcommand{\cP}{\mathcal{P}}
\newcommand{\cS}{\mathcal{S}}
\newcommand{\cU}{\mathcal{U}}
\newcommand{\cV}{\mathcal{V}}
\newcommand{\cW}{\mathcal{W}}
\newcommand{\cX}{\mathcal{X}}
\newcommand{\cY}{\mathcal{Y}}
\newcommand{\cZ}{\mathcal{Z}}
\newcommand{\stProj}{\operatorname{Proj}}
\newcommand{\E}{\operatorname{Ext}}
\newcommand{\spec}{\operatorname{Spec}}
\newcommand{\Hom}{\operatorname{Hom}}
\newcommand{\Gm}{\bG_{{\rm m}}}
\newcommand{\Id}{\operatorname{Id}}
\newcommand{\Isom}{\operatorname{Isom}}
\newcommand{\bmu}{\bm{\mu}}
\newcommand{\Out}{\operatorname{Out}}
\newcommand{\pr}{{\rm pr}}
\newtheorem{theorem}{Theorem}[section]
\newtheorem{Teo}[theorem]{Theorem}
\newtheorem*{Teo*}{Theorem}
\newtheorem{Lemma}[theorem]{Lemma}
\newtheorem{Cor}[theorem]{Corollary}
\newtheorem{example}[theorem]{Example}
\newtheorem{Prop}[theorem]{Proposition}
\newtheorem*{Ques*}{Question}
\theoremstyle{definition}
\newtheorem{Oss}[theorem]{Remark}
\newtheorem*{Oss'}{Remark}
\newtheorem{Def}[theorem]{Definition}
\newtheorem*{Def*}{Definition}
\newtheorem{Remark}[theorem]{Remark}
\newcommand{\marg}[1]{\normalsize{{\color{red}\footnote{{\color{blue}#1}}}{\marginpar[{\color{red}\hfill\tiny\thefootnote$\rightarrow$}]{{\color{red}$\leftarrow$\tiny\thefootnote}}}}}
\newcommand{\Gio}[1]{\marg{(Giovanni) #1}}
\begin{document}
\title{A criterion for smooth weighted blow-downs}
\author[V.~Arena]{V.~Arena}
\address[Veronica Arena]{DPMMS, University of Cambridge, UK}
	\email{va365@cam.ac.uk}
\author[A.~Di Lorenzo]{A.~Di Lorenzo}
	\address[Andrea Di Lorenzo]{Dipartimento di Matematica, Università di Pisa, Italy}
	\email{andrea.dilorenzo@unipi.it}
\author[G.~Inchiostro]{G.~Inchiostro}
	\address[Giovanni Inchiostro]{University of Washington, Seattle, Washington, USA}
	\email{ginchios@uw.edu}
\author[S.~Mathur]{S.~Mathur}
\address[Siddharth Mathur]{ University of Georgia, Athens, USA}
\email{siddharth.mathur@uga.edu}
\author[S.~Obinna]{S.~Obinna}
\address[Stephen Obinna]{University of Waterloo, Canada}
	\email{sobinna@uwaterloo.ca}
\author[M.~Pernice]{M.~Pernice}
\address[Michele  Pernice]{University of Washington, Seattle, Washington, USA}
	\email{mpernice@uw.edu}
	\maketitle
	\begin{abstract}
		We establish a criterion for determining when a smooth Deligne-Mumford stack is a weighted blow-up. More precisely, given a smooth Deligne-Mumford stack $\cX$ and a Cartier divisor $\cE \subset \cX$ such that (1) $\cE$ is a weighted projective bundle over a smooth Deligne-Mumford stack $\cY$ and (2) for every $y\in\cY$ we have $\cO_{\cX}(\cE)|_{\cE_y}\simeq \cO_{\cE_y}(-1)$, then there exists a contraction $\cX\to\cZ$ to a smooth Deligne-Mumford stack $\cZ$. Moreover, the stack $\cX$ can be recovered as a weighted blow-up along $\cY\subset \cZ$ with exceptional divisor $\cE$, and $\cZ$ is a pushout in the category of algebraic stacks. As an application, we show that the moduli stack $\overline{\mathscr{M}}_{1,n}$ of stable $n$-pointed genus one curves is a weighted blow-up of the stack of pseudo-stable curves. A key step is a reconstruction result for smooth Deligne-Mumford stacks that may be of independent interest. 
	\end{abstract}

 \section{Introduction}
 Blow-ups are arguably the most important kind of birational transformations between algebraic varieties. Indeed, any rational map between smooth varieties $X\dashrightarrow Y$ can be factored as a composition of blow-ups and blow-downs along smooth centers \cite{abramovich_factor}. Similarly, classical desingularizations can also be factored into a sequence of blow-ups.
 As such, it is natural to ask: given a variety $X$, when is it the blow-up of another variety $Y$? This is useful as usually $Y$ is easier to understand than $X$. For smooth surfaces, the answer is classical: Castelnuovo's theorem says that $X$ is the blow-up of a smooth surface $Y$ if an only if it has a rational $-1$-curve. 
 For higher dimensional varieties, there are similar results due to Kodaira \cite{kodaira1954kahler} and Grauert \cite{grauert1962modifikationen} (for contracting $n$-dimensional projective spaces), to Moishezon \cite{moishezon1966n} (in the complex manifold case) , and to Artin \cite{artin1970algebraization} (in even greater generality).

 In the realm of algebraic stacks, there is a natural generalization of blow-ups: \textit{weighted blow-ups} \cites{abramovich2019functorial, QR}. Rather than being defined by a single ideal sheaf, these are characterized by a sequence of ideal sheaves which satisfy a compatibility condition (see \Cref{def_weighted_embedding}).
 Note that two birational algebraic stacks need not be related by a sequence of blow-ups (e.g. root stacks), so the extra flexibility of weighted blow-ups yields better algorithms for resolution of singularities \cite{abramovich2019functorial}.
 
 Just as with classical blow-ups, a weighted blow-up $\cX\to \cZ$ is an isomorphism on the complement of a closed substack, which we call the \textit{reduced center} of the weighted blow-up.
 Thus, weighted blow-ups yield birational transformations between algebraic stacks, and sometimes they even introduce new stacky structure.
 Indeed, the exceptional divisor of a weighted blow-up is a weighted projective bundle over $\cY$: this is a stack which, locally over $\cY$, looks like the base times $\cP(a_1,\ldots,a_n)$. This is the quotient stack $[\bA^{n}\smallsetminus\{0\}/\bG_m]$ with $\bG_m$ acting linearly with weights $a_1,\ldots, a_n$. 
 Then a natural question is: \textcolor{black}{given an algebraic stack $\cX$, when is it the \emph{weighted} blow-up of an algebraic stack $\cZ$?}

 \begin{Teo}\label{Thm_intro}Let $\cX$ and $\cY$ be smooth \textcolor{black}{tame} and separated Deligne-Mumford stacks, and let $\pi:\cE\to \cY$ be a weighted projective bundle with positive dimensional fibers. Assume that there is a closed embedding $\cE\hookrightarrow \cX$ with
  $\cE$ a Cartier divisor in $\cX$, such that $\cO_\cX(\cE)|_\cE\cong\cO_{\cE}(-1)\otimes \pi^*\cL$ for a line bundle $\cL$ over $\cY$.
  
  Then there is a smooth, separated \textcolor{black}{and tame} Deligne-Mumford stack $\cZ$, with two maps $i: \cY\to \cZ$ and $p: \cX\to \cZ$ such that $i$ is a closed embedding and $p$ is a weighted blow-up with reduced center $\cY$. \textcolor{black}{Moreover, the resulting square is a pushout in algebraic stacks.}
 \end{Teo}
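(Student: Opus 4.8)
The plan is to realise $\cZ$ as the contraction of $\cX$ that collapses $\cE$ onto $\cY$ along $\pi$; concretely, $\cZ$ will be the pushout $\cX\sqcup_{\cE}\cY$, so that the required square is a pushout by construction, with $i\colon\cY\hookrightarrow\cZ$ and $p\colon\cX\to\cZ$ the two structure maps. Since $p$ is to be an isomorphism away from $\cE$, the entire content is the structure of $\cX$ in a neighbourhood of the divisor $\cE$, which is governed by the graded sheaf of algebras $\bigoplus_{m\ge0}\sI_\cE^m/\sI_\cE^{m+1}$, where $\sI_\cE$ denotes the ideal of $\cE$.

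\emph{The local model.} Write the weighted projective bundle as $\cE=\cP(\cN)$ for a suitable graded locally free sheaf $\cN$ on $\cY$ (the grading recording the weights), choosing $\cN$ so that $\cO_{\cP(\cN)}(-1)\cong\cO_\cX(\cE)|_\cE$; this absorbs the twist by $\cL$ into the choice of $\cN$. The natural candidate for $\cZ$ near $\cY$ is then an open substack $\cW$ of the total space $\mathrm{Tot}(\cN)$ containing the zero section $\cY$, with $\cX$ near $\cE$ being the weighted blow-up $\mathrm{Bl}_\cY\cW$, whose exceptional divisor is canonically $\cP(\cN)=\cE$ with normal bundle $\cO_\cE(-1)$. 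Since $\cE$ is Cartier, on both $\cX$ and $\mathrm{Bl}_\cY\mathrm{Tot}(\cN)$ one has canonically $\sI_\cE^m/\sI_\cE^{m+1}\cong\cO_\cE(m)$, with matching extension data because $\cO_\cX(\cE)|_\cE\cong\cO_\cE(-1)$; the first key step is to upgrade this to an isomorphism of completions along $\cE$ --- this is the ``reconstruction result'' promised in the introduction. Working smooth-locally on $\cY$ (so that the relevant coherent cohomology of $\cY$ vanishes), the obstruction to lifting an identification from the $m$-th to the $(m{+}1)$-st infinitesimal neighbourhood of $\cE$ lies in $H^1\bigl(\cE,\ T_{\cE}\otimes\cO_\cE(m)\bigr)$ for $m\ge1$, which vanishes since $\cO_\cE(1)$ is $\pi$-ample, $\cY$ is locally affine, and everything is smooth and tame; hence, smooth-locally on $\cY$, the completion of $\cX$ along $\cE$ is isomorphic to that of $\mathrm{Bl}_\cY\mathrm{Tot}(\cN)$. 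Algebraising by Artin approximation then yields, smooth-locally on $\cY$, an open $\cU\subseteq\cX$ with $\cE\subset\cU$, an open $\cW\subseteq\mathrm{Tot}(\cN)$ with $\cY\subset\cW$, and an isomorphism $\cU\cong\mathrm{Bl}_\cY\cW$ carrying $\cE$ to the exceptional divisor.

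\emph{Assembling $\cZ$, $p$, and the universal property.} I would then glue $\cX\smallsetminus\cE$ to the local models $\cW$ along the common opens $\cU\smallsetminus\cE\cong\cW\smallsetminus\cY$: the descent datum for the family of $\cW$'s is induced by that for the $\cU$'s inside $\cX\smallsetminus\cE$, so it is effective and produces an algebraic stack $\cZ$, which is smooth, separated and tame because it is obtained by gluing such stacks along opens (separatedness needs a short verification that the diagonal remains proper). The morphism $p\colon\cX\to\cZ$ is the identity on $\cX\smallsetminus\cE$ and the weighted blow-down $\mathrm{Bl}_\cY\cW\cong\cU\to\cW$ near $\cE$; these agree on overlaps, and since forming a weighted blow-up is local on the target, $p$ is a weighted blow-up with reduced centre $\cY$ and exceptional divisor $\cE$, while $i\colon\cY\hookrightarrow\cW\hookrightarrow\cZ$ is a closed embedding. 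For the pushout property, given $f\colon\cX\to\cV$ and $g\colon\cY\to\cV$ with $f|_\cE\simeq g\circ\pi$, one glues $f|_{\cX\smallsetminus\cE}$ with factorisations $\cW\to\cV$ of the composites $\mathrm{Bl}_\cY\cW\to\cV$; these exist because the weighted blow-down is itself a pushout, $\mathrm{Bl}_\cY\cW\sqcup_\cE\cY\xrightarrow{\ \sim\ }\cW$ --- checked smooth-locally on $\cY$ on the affine graded model, where the blow-down contracts the weighted-projective fibres and is initial among morphisms constant along them --- and the compatibility $f|_\cE\simeq g\circ\pi$ is exactly what lets these local factorisations glue to a morphism $\cZ\to\cV$, necessarily unique.

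The step I expect to be hardest is the reconstruction/algebraisation of the local model: setting up the deformation theory of the pair (smooth ambient stack, the divisor $\cE$) so that completions along $\cE$ can be matched, keeping track throughout of the weighted and stacky structure of $\pi\colon\cE\to\cY$ --- weighted projective bundles are not Zariski-locally trivial and their Serre twists interact nontrivially with $\cL$ --- and then globalising cleanly from the smooth-local models to $\cZ$, together with the accompanying tameness, separatedness, and ``pushout in algebraic stacks'' bookkeeping.
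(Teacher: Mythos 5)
Your strategy (formal rigidity of the neighbourhood of $\cE$, algebraization, then gluing local models) is a recognizable classical route, but as written it has gaps that are precisely the points the paper has to work hardest on. The most basic one is structural: there is no morphism from a neighbourhood of $\cE$ in $\cX$ to $\cY$ (only $\cE$ maps to $\cY$), so ``working smooth-locally on $\cY$'' does not produce opens or \'etale neighbourhoods $\cU\subseteq\cX$ with $\cE\subset\cU$ to which your order-by-order comparison and Artin approximation could apply; approximation at a point of $\cY$ gives you nothing you can base-change $\cX$ along. This is exactly why the paper first constructs the contraction $Z$ of the coarse space globally (via Artin's contraction theorem, \Cref{teo_contraction_cms}) and only then localizes \'etale-locally on $Z$, using coherent completeness of the completion along $\cE$, Tannaka duality, and Artin approximation over the henselization (\Cref{prop_reduction_to_quotient}) to algebraize; your ``algebraising by Artin approximation then yields an open $\cU\supseteq\cE$'' skips all of this and is not a consequence of approximation alone. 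Relatedly, $\cE=\cP(\cN)$ for a \emph{graded locally free sheaf} $\cN$ is not available: weighted affine bundles have non-linear transition functions (\Cref{twistedweightedvectorbundle}), and the obstruction spaces you quote ($H^1(\cE,T_\cE\otimes\cO_\cE(m))$) are not the right ones — one needs the tangent sheaf of the ambient model restricted to the thickenings, including normal directions, and a separate argument matching the first-order neighbourhoods.

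The second genuine gap is the gluing and the pushout. Your transition data between the local models $\cW_i$ is only defined away from the centers, which have codimension $\geq 2$; extending these isomorphisms (and their cocycle compatibilities) across the centers is a nontrivial extension problem for smooth tame DM stacks — it is the content of the paper's Section 3 (canonical stacks, root stacks, extension and uniqueness of gerbes, \Cref{thm_DM_stack_is_determined_by_cms_and_codim1}) together with \Cref{lemma_isom_coarse_moduli_space} and Zariski's main theorem, and it cannot be dismissed as ``the descent datum is induced by that of the $\cU$'s.'' Likewise, the statement that the weighted blow-down of the affine model ``is itself a pushout, checked smooth-locally on $\cY$'' is unjustified: colimits of algebraic stacks do not localize for free, and the paper needs a genuine criterion (\Cref{prop_pushout_criteria}) resting on coherent completeness, Tannaka duality, and Mayer–Vietoris squares, verified via the cohomological vanishings of \Cref{cor_conditions_are_satisfied_for_affine_cont_to_be_a_pushout}. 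Until you supply (i) a way to localize — in practice, a global coarse contraction or an equivalent formal-to-\'etale algebraization mechanism, (ii) the codimension-$\geq 2$ extension results for the gluing, and (iii) an actual proof of the local pushout property, the proposal does not yet constitute a proof.
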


As a sample application, we show the following.
\begin{Teo}\label{thm:application intro}
    Let $\overline{\mathscr{M}}_{1,n}$ be the moduli stack of stable $n$-pointed curves of genus one, with $n\geq 2$.
    \begin{enumerate}
        \item there is a contraction $\overline{\mathscr{M}}_{1,n}\to \overline{\mathscr{M}}^*_{1,n}$ which sends the divisor $\Delta_{0,n}$ of curves with elliptic tails to a closed substack isomorphic to $\overline{\mathscr{M}}_{0,n+1}$. This contraction is a weighted blow-up.
        \item The stack $\overline{\mathscr{M}}_{1,n}^*$ is isomorphic to the moduli stack of pseudo-stable curves $\overline{\mathscr{M}}_{1,n}^{\rm{ps}}$ \cite[Sec. 5]{Sch}, hence $\overline{\mathscr{M}}_{1,n}$ is a weighted blow up of $\overline{\mathscr{M}}_{1,n}^{\rm{ps}}$.
    \end{enumerate}
\end{Teo}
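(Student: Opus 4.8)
The plan is to apply \Cref{Thm_intro} to $\cX=\overline{\mathscr{M}}_{1,n}$ with $\cE=\Delta_{0,n}$ and $\cY=\overline{\mathscr{M}}_{0,n+1}$. Recall that the clutching construction identifies the elliptic-tails divisor with a product, $\Delta_{0,n}\cong\overline{\mathscr{M}}_{0,n+1}\times\overline{\mathscr{M}}_{1,1}$, the first factor recording the genus-zero component with its $n$ markings and the attaching node (as the $(n+1)$-st point), the second recording the elliptic tail with its attaching point; the hypothesis $n\geq 2$ is exactly what makes the genus-zero component stable. Using the classical isomorphism $\overline{\mathscr{M}}_{1,1}\cong\cP(4,6)$ (valid in characteristic zero, and more generally whenever $6$ is invertible), the first projection $\pi\colon\Delta_{0,n}\cong\overline{\mathscr{M}}_{0,n+1}\times\cP(4,6)\to\overline{\mathscr{M}}_{0,n+1}$ is a (trivial) weighted projective bundle with one-dimensional fibers. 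The stacks $\overline{\mathscr{M}}_{1,n}$ and $\overline{\mathscr{M}}_{0,n+1}$ are smooth, separated and tame Deligne-Mumford stacks, and $\Delta_{0,n}$ is Cartier because $\overline{\mathscr{M}}_{1,n}$ is smooth.

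The one hypothesis of \Cref{Thm_intro} requiring a computation is the condition on the conormal bundle. By the standard description of the normal bundle of a separating boundary divisor, $\cO_{\overline{\mathscr{M}}_{1,n}}(\Delta_{0,n})|_{\Delta_{0,n}}$ is the external tensor product of the duals of the cotangent lines at the two branches of the node; writing $\rho\colon\Delta_{0,n}\to\overline{\mathscr{M}}_{1,1}$ for the second projection, this is $\pi^*\cL\otimes\rho^*(\lambda^{-1})$, where $\cL$ is the dual of the $(n+1)$-st cotangent line bundle on $\overline{\mathscr{M}}_{0,n+1}$ and $\lambda$ is the Hodge bundle on $\overline{\mathscr{M}}_{1,1}$ (using $\psi_1=\lambda$ there). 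Since $\lambda=\cO_{\cP(4,6)}(1)$ under $\overline{\mathscr{M}}_{1,1}\cong\cP(4,6)$, we get $\rho^*(\lambda^{-1})=\cO_{\Delta_{0,n}}(-1)$, hence $\cO_{\overline{\mathscr{M}}_{1,n}}(\Delta_{0,n})|_{\Delta_{0,n}}\cong\cO_{\Delta_{0,n}}(-1)\otimes\pi^*\cL$; in particular its restriction to each fiber of $\pi$ is $\cO_{\cP(4,6)}(-1)$. Thus \Cref{Thm_intro} applies and produces a smooth, separated, tame Deligne-Mumford stack, which we call $\overline{\mathscr{M}}^*_{1,n}$, together with a closed embedding $i\colon\overline{\mathscr{M}}_{0,n+1}\hookrightarrow\overline{\mathscr{M}}^*_{1,n}$ and a weighted blow-up $p\colon\overline{\mathscr{M}}_{1,n}\to\overline{\mathscr{M}}^*_{1,n}$ with reduced center $\overline{\mathscr{M}}_{0,n+1}$ and exceptional divisor $\Delta_{0,n}$ satisfying $p|_{\Delta_{0,n}}=i\circ\pi$. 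This is precisely the contraction of part (1).

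For part (2) I would compare $\overline{\mathscr{M}}^*_{1,n}$ with the moduli stack $\overline{\mathscr{M}}^{\mathrm{ps}}_{1,n}$ of pseudo-stable curves, using the following standard inputs on pseudo-stabilization (see \cite{Sch}): $\overline{\mathscr{M}}^{\mathrm{ps}}_{1,n}$ is a smooth, separated, proper Deligne-Mumford stack; there is a pseudo-stabilization morphism $T\colon\overline{\mathscr{M}}_{1,n}\to\overline{\mathscr{M}}^{\mathrm{ps}}_{1,n}$ which is an isomorphism away from $\Delta_{0,n}$; and $T$ contracts $\Delta_{0,n}$ by replacing the elliptic tail with a cusp, so that $T|_{\Delta_{0,n}}$ factors as $\Delta_{0,n}\xrightarrow{\ \pi\ }\overline{\mathscr{M}}_{0,n+1}\xrightarrow{\ j\ }\overline{\mathscr{M}}^{\mathrm{ps}}_{1,n}$, where $j$ is the closed embedding of the locus of cuspidal curves, which is isomorphic to $\overline{\mathscr{M}}_{0,n+1}$ and has codimension two. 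Since $T\circ(\Delta_{0,n}\hookrightarrow\overline{\mathscr{M}}_{1,n})=j\circ\pi$, the pair $(T,j)$ is a cocone under the span $\overline{\mathscr{M}}_{1,n}\hookleftarrow\Delta_{0,n}\xrightarrow{\pi}\overline{\mathscr{M}}_{0,n+1}$; as $\overline{\mathscr{M}}^*_{1,n}$ is the pushout of this span in algebraic stacks by \Cref{Thm_intro}, there is a unique morphism $g\colon\overline{\mathscr{M}}^*_{1,n}\to\overline{\mathscr{M}}^{\mathrm{ps}}_{1,n}$ with $g\circ p=T$ and $g\circ i=j$.

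It remains to show $g$ is an isomorphism. It is birational, being an isomorphism on the dense open complements of the two centers (where $p$ and $T$ are isomorphisms); it is proper, because $\overline{\mathscr{M}}^*_{1,n}$ is proper (being covered by $\overline{\mathscr{M}}_{1,n}$ via the proper surjection $p$) and $\overline{\mathscr{M}}^{\mathrm{ps}}_{1,n}$ is separated; and it is quasi-finite, since this is clear away from the cuspidal locus, while over a point $w\in j(\overline{\mathscr{M}}_{0,n+1})$, setting $y_w:=j^{-1}(w)$, one has $p^{-1}(g^{-1}(w))=T^{-1}(w)=\pi^{-1}(y_w)=p^{-1}(i(y_w))$, whence $g^{-1}(w)=\{i(y_w)\}$ by surjectivity of $p$. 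Moreover $g$ is representable: away from the centers it is an isomorphism, and on the centers all automorphism groups are trivial --- on $\overline{\mathscr{M}}^{\mathrm{ps}}_{1,n}$ because a genus-one cuspidal curve with $n\geq 2$ distinct smooth marked points has no nontrivial automorphisms, and on $\overline{\mathscr{M}}^*_{1,n}$ because $i$ is a monomorphism and $\overline{\mathscr{M}}_{0,n+1}$ is a scheme --- so $g$ induces monomorphisms on all automorphism groups. A representable, proper, quasi-finite morphism is finite, and a finite birational morphism onto a normal stack is an isomorphism; therefore $\overline{\mathscr{M}}^*_{1,n}\cong\overline{\mathscr{M}}^{\mathrm{ps}}_{1,n}$, which together with part (1) completes the proof. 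The main obstacle is twofold: pinning down the conormal computation so that it matches $\cO(-1)$ on the nose, which rests on the classical identities $\psi_1=\lambda$ on $\overline{\mathscr{M}}_{1,1}$ and $\lambda=\cO_{\cP(4,6)}(1)$; and assembling the structural input on $\overline{\mathscr{M}}^{\mathrm{ps}}_{1,n}$ and pseudo-stabilization (smoothness and properness of the target, the identification of the contracted locus with $\Delta_{0,n}$ and of its fibers with $\overline{\mathscr{M}}_{1,1}$, and the description of the cuspidal locus), which I expect to require the most care.
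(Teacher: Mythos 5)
Your part (1) is essentially the paper's own argument: the same identification $\Delta_{0,n}\simeq \overline{\mathscr{M}}_{1,1}\times\overline{\mathscr{M}}_{0,n+1}$ via the gluing map, the isomorphism $\overline{\mathscr{M}}_{1,1}\simeq\cP(4,6)$ matching the Hodge bundle with $\cO_{\cP(4,6)}(1)$, and the computation $\cN_{\Delta_{0,n}\vert\overline{\mathscr{M}}_{1,n}}\simeq\cO_{\cP(4,6)}(-1)\otimes\pr_2^*\bL_{n+1}$ as the external product of the duals of the cotangent lines at the two branches of the node, after which \Cref{theo:main} applies verbatim. For part (2) you take a genuinely different, and correct, route: the paper observes that $\overline{\mathscr{M}}^{\rm ps}_{1,n}$, with its cuspidal locus identified with $\overline{\mathscr{M}}_{0,n+1}$ by the pinching morphism and with the contraction diagram of \cite{CTV}, is a smooth separated Deligne-Mumford stack fitting into the same contraction diagram, and then invokes the uniqueness of the contraction established in the proof of \Cref{theo:main} (which rests on \Cref{teo_contraction_cms} and \Cref{thm_DM_stack_is_determined_by_cms_and_codim1}); you instead use only the pushout property stated in \Cref{Thm_intro} to produce the comparison map $g\colon\overline{\mathscr{M}}^*_{1,n}\to\overline{\mathscr{M}}^{\rm ps}_{1,n}$ with $g\circ p=T$ and $g\circ i=j$, and then check by hand that $g$ is representable, proper, quasi-finite and birational, hence finite, and an isomorphism by Zariski's main theorem since the target is smooth, hence normal. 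Both routes rely on the same external inputs on pseudo-stabilization (\cite{Sch}, \cite{CTV}, \cite{DLPV}); yours is self-contained given only the pushout clause of the stated theorem, at the price of redoing the ZMT-type verification that the paper's uniqueness machinery already packages. Two small remarks: your claim that cuspidal pseudo-stable curves with $n\geq 2$ markings have trivial automorphisms is not needed for representability, since injectivity on stabilizers already follows from the triviality of the stabilizers of $\overline{\mathscr{M}}^*_{1,n}$ along $i(\overline{\mathscr{M}}_{0,n+1})$ (which, as you say, holds because $i$ is a closed embedding with source a scheme); and your quasi-finiteness computation implicitly uses that $T^{-1}$ of the cuspidal locus is exactly $\Delta_{0,n}$ set-theoretically, which is part of the standard description of pseudo-stabilization you cite, so no gap.
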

See also \cite{Inc} for the case $n=2$, and \cite{BDL} for further applications of this criterion.

\textcolor{black}{Finally, in \Cref{section_appendix} Stephen Obinna studies how \Cref{theo:main} changes if one replaces $\cO_\cE(-1)$ with $\cO_\cE(-m)$ (see \Cref{thm_appendix}).}
\subsection{Strategy of proof}
Our strategy for proving the main theorem is the following: we first prove that the hypotheses of the main theorem imply the existence of a contraction at the level of coarse spaces. 

We leverage this first result to prove the main theorem in the simpler case of $\cY=\spec(A)$ an affine scheme and $\cE\simeq \cP_A(a_1,\ldots,a_n)$: in fact $\cY$ being a scheme implies that locally around $\cE$ the stack $\cX$ is a scheme away from $\cE$, hence the contracted \emph{stack} can be obtained by gluing $\cX\smallsetminus\cE$ and a neighbourhood of $\spec(A)$ in the contracted coarse space. More effort is needed in order to prove the smoothness of this stack, and that $\cX$ is a weighted blow-up.

We then extend the previous result to an equivariant setting, i.e. we suppose that a finite group $G$ acts on all the stacks involved, and all the maps are equivariant, and we prove that the $G$-action extends to the contracted stack. This easily implies our main theorem in the case $\cY=[\spec(A)/G]$ and $\cE=[\cP_A(a_1,\ldots,a_n)/G]$. 

We lift then the contraction of the coarse spaces to a contraction of smooth Deligne-Mumford stacks, thus proving our main theorem. We start by showing that the contracted coarse space has only finite quotient singularities, hence it can be regarded as the coarse space of a smooth Deligne-Mumford stack $\cZ$. This requires extending stacky structure along open immersions and we prove results along these lines.
%\begin{Teo}[\Cref{thm_DM_stack_is_determined_by_cms_and_codim1}]\label{thm_reconstruction_intro}
   %  Let $X$ be a separated and connected scheme of finite type over $\spec(k)$, with finite tame quotient singularities, let $U$ be an open subscheme of $X$ with complement of codimension $\geq 2$. Assume that $\cU$ is a smooth Deligne-Mumford gerbe with coarse moduli space $U$. Then there is a unique smooth Deligne-Mumford gerbe $\cX$ such that its coarse moduli space is isomorphic to $X$, and with an open embedding $\cU\to \cX$ such that the compositions $\cU\to \cX\to X$ and $\cU\to U \to X$ agree.
% \end{Teo} To construct the contraction, we work then \'{e}tale locally on the Deligne-Mumford stack $\cZ$, and then we leverage the fact that smooth and separated Deligne-Mumford stacks locally always look like $[W/H]$ for a finite group $H$ acting on an affine scheme $W$, in order to apply the results of the previous steps. The locally-defined contractions glue by the uniqueness of the local constructions.

\subsection{Organization of the manuscript}
The manuscript is organized as follows.  \Cref{section_introduction_to_wblowups} begins with a brief introduction to weighted blow-ups and their main properties.
In \Cref{section_prelims} we prove an extension result for Deligne-Mumford stacks (see \Cref{thm_DM_stack_is_determined_by_cms_and_codim1}).
In \Cref{section_contraction_on_cms} we construct an algebraic space $Z$, which will be the coarse moduli space of $\cZ$, using \cite{artin1970algebraization}. We also prove that $Z$ is normal, which will be useful later, as it will allow us to apply Zariski's main theorem to prove that certain maps are isomorphisms. 
\Cref{section_local_analisis} contains the most technical results: roughly speaking, we prove the case where the coarse moduli space of $\cY$ is affine and finite over an affine scheme $B$. The core argument is \Cref{thm_affine_contraction}, which is an adaptation of the proof of Castelnuovo's theorem in \cite{kol_resol}.
In \Cref{section_pf_thm} we prove the main result. Using the results of \Cref{section_local_analisis} we construct $\cZ$ \'etale locally over $Z$. By using \Cref{thm_DM_stack_is_determined_by_cms_and_codim1} and Zariski's main theorem we prove that the local constructions glue, and give the desired algebraic stack $\cZ$. 
In \Cref{section_application_to_the_case_of_curves} we give an application of our main result to some moduli stacks of curves: namely, let $\overline{\mathscr{M}}_{1,n}$ be the moduli stack of stable $n$-pointed curves of genus one. Then we use our theorem to construct a contraction $\overline{\mathscr{M}}_{1,n}\to \overline{\mathscr{M}}^*_{1,n}$ which sends the divisor $\Delta_{0,n}$ of curves with elliptic tails to a closed substack isomorphic to $\overline{\mathscr{M}}_{0,n+1}$. The stack $\overline{\mathscr{M}}_{1,n}^*$ turns out to be isomorphic to the moduli stack of pseudo-stable curves $\overline{\mathscr{M}}_{1,n}^{\rm{ps}}$, thus exhibiting $\overline{\mathscr{M}}_{1,n}$ as  a weighted blow up of $\overline{\mathscr{M}}_{1,n}^{\rm{ps}}$.
Finally in \Cref{section_appendix} Stephen Obinna shows how \Cref{Thm_intro} changes if we assume instead that $\cO_\cE(-1)$ is replaced with $\cO_\cE(-m)$.
 
 \subsection*{Conventions} We work over a field $k$ which is algebraically closed. \textcolor{black}{Unless otherwise stated, all our algebraic stacks will be with affine stabilizers and locally excellent.}\\
 
\subsection*{Acknowledgements} We thank Dan Abramovich, Jarod Alper, Elden Elmanto, Tyson Klingner, Martin Olsson, Mauro Porta and David Rydh for helpful conversations. We also thank the anonymous referee for their useful suggestions. Michele Pernice was supported by the Knut and Alice Wallenberg foundation 2021.0291. Veronica Arena and Stephen Obinna were supported in part by funds from BSF grant 2018193 and NSF grant DMS-2100548. Siddharth Mathur was supported
by FONDECYT Regular grant No. 1230402.

  \section{Introduction to weighted blow-ups}\label{section_introduction_to_wblowups}

This section is organized in two subsections, where we first introduce weighted blow-ups in general, and then we specialize to \textit{regular} weighted blow-ups: a particular type of weighted blow-ups.
\subsection{Weighted embeddings and weighted blow-ups}
This introduction to weighted blow-ups is based on the work of Quek and Rydh \cite{QR}.
Intuitively, an ordinary blow-up is constructed by replacing the center with a projective bundle which parameterizes the normal directions to the center in the ambient space. When considering weighted blow-ups, the same intuition holds with the added subtlety that each normal direction has a (positive integer) weight. 
This intuition highlights that the datum of a closed embedding, which is all we need in order to define an ordinary blow-up, is no longer enough. In fact, for each choice of basis for the normal directions, we need to specify the positive integer weight. 
This data is given by a \textit{weighted embedding}. 

\begin{Def}[\cite{QR}*{Def. 3.1.1}]\label{def_weighted_embedding}
    A weighted embedding $Y_{\bullet} \hookrightarrow X$ is defined by a sequence of closed embeddings $\{ Y_n=V(I_n) \hookrightarrow X \}_{n \ge 0}$ such that: \begin{itemize}
        \item $I_0 \supset I_1 \supset \dots \supset I_n \supset \dots$
        \item $I_n I_m \subset I_{n+m}$
        \item Locally in the smooth topology on $X$, there exists a sufficiently large positive integer $d$ such that for all integers $n \ge 1$, $$I_n=\left(I_1^{l_1}I_2^{l_2} \cdots I_d^{l_d} \; : \; l_i \in \bN, \ \sum_{i=1}^d il_i=n\right)$$ in which case, we say $I_\bullet$ is generated in degrees $\le d$. 
    \end{itemize} 
\end{Def} 

The three conditions above are the conditions that the filtration of ideal sheaves needs to satisfy in order to be a Rees algebra. 

\begin{Def}The weight of a local coordinate will be the maximum index $n$ such that $I_n$ contains said local coordinate. \end{Def}

\begin{example}\label{embeddingoftheorigin}
    Consider the sequence of ideals 
    $$k[x,y] \supset (x,y) \supset (x^2, y) \supset (x^3, xy, y^2) \supset (x^4, y^2, x^2y) \supset \dots$$

    These ideals define a weighted embedding $0_\bullet$ of the origin in $\bA^2$.
    In this case, the weights will be $1$ for $x$ and $2$ for $y$, and this weighted embedding will define the weighted blow-up of the origin in $\bA^2$ with weights $(1,2)$. 

\end{example}

\begin{Def}[Weighted blow-up]
Let $Y_\bullet \hookrightarrow X$ be a weighted embedding defined by the ideal sheaves $I_n$ and let $I_\bullet=\bigoplus I_nt^n \subset \cO_X[t]$. Let $I_{+}$ be the ideal generated by $t$.
The weighted blow-up of $X$ at $Y=V(I_1)$ is defined as the morphism $$Bl_{Y}X:=\stProj_X\left(I_\bullet\right)=[(\spec_X(I_\bullet) \setminus V(I_+))/\bG_m] \rightarrow X$$ where $\stProj$ is the stack-theoretic proj. 
\end{Def}

The same morphism is obtained by considering the extended Rees algebras:
\begin{Remark}\label{def_wblowup_algebra_in_all_degrees}
Let $Y_\bullet \hookrightarrow X$ be a weighted embedding defined by the ideal sheaves $I_n$ and let $I_\bullet^{ext}=\bigoplus_{n \in \bZ} I_nt^n \subset \cO_X[t^{\pm 1}]$ with $I_n = \cO_X $ for $n \le 0$. The weighted blow-up of $X$ at $Y$ can also be defined as the morphism $\stProj_X\left(I_\bullet^{ext}\right)\rightarrow X$, see \cite{QR}*{Rmk. 3.2.4}.
\end{Remark}

The definition of weighted blow-up via $I_\bullet^{ext}$ as in \Cref{def_wblowup_algebra_in_all_degrees} will often make for an easier description, as in the following example. 

\begin{example}\label{blowupoftheorigin} Given the weighted embedding in \Cref{embeddingoftheorigin}, we can explicitly write $Bl_{0}\bA^2 \xrightarrow{\pi}\bA^2$ via  $$Bl_{0}\bA^2 \cong \stProj \left(\frac{k[x,y][s,x',y']}{(x-sx', y-s^2y')}\right)$$
where $s$ has degree $-1$, $x'$ has degree $1$ and $y'$ has degree $2$.
Similarly, the weighted blow-up of the origin in $\bA^n$ with weights $(a_1,\dots a_n)$ will be $$Bl_{0}\bA^n \cong \stProj\left(\frac{k[x_1,\dots x_n][s,x_1',\dots x_n']}{(x_1-s^{a_1}x_1',\dots, x_n-s^{a_n}x_n')}\right)$$ with $x_i'$ in degree $a_i$ and $s$ in degree $-1$.
\end{example}
\begin{Remark}
    \Cref{blowupoftheorigin} can also be obtained as in \cite{Inc}*{Sec. 2.2}. In particular, it is the open substack of $[\bA^{n+1}/\Gm]$, given by the complement of the points of the form $(0,...,0,s)$, where the action of $\Gm$ on the coordinates of $\bA^{n+1}$ is with weights $(a_1,...,a_n,-1)$.
\end{Remark}
Lastly, we cite the two main properties of weighted blow-up, \cite[Thm. 3.2.9 and Cor. 3.2.14]{QR}. 
\begin{theorem}[Universal property of weighted blow-ups]\label{thm_univ_property_wblowup}
    Let $Y_\bullet$ be weighted embedding induced by the Rees algenbra $I_\bullet$, and let $\pi: X'=Bl_YX \rightarrow X$ the induced weighted blow-up. \begin{enumerate}
        \item For every $n \in \bN$ we have $\pi^{-1}(I_n)\cdot \cO_{X'}\subset I^n_E$ which is (locally on X) an equality for a sufficiently divisible $n$;
        \item Let $f: T \rightarrow X$ be a morphism such that $U:=T \setminus f^{-1}(Y)$ is schematically dense. The groupoid of factorizations through $\pi$ is equivalent to the set of effective Cartier divisors $D$ on $T$ such that $f^{-1}(I_n) \cdot \cO_T \subset I^n_D$ for all $n$, with equality for sufficiently divisible $n$ (locally on T). If $f=\pi \circ g$, then $D=g^{-1}(E).$
    \end{enumerate}  
\end{theorem}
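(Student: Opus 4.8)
The plan is to reduce both assertions to explicit computations with (extended) Rees algebras on the affine charts of the stack-theoretic $\stProj$, following the classical proof of the universal property of ordinary blow-ups while bookkeeping the weights. Since both statements are smooth-local on $X$, I may assume by \Cref{def_weighted_embedding} that $X=\spec R$ is affine and $I_\bullet$ is generated in degrees $\le d$. Write $\mathcal{R}=\bigoplus_{n\ge0}I_nt^n$ and let $\mathcal{R}^{ext}=\bigoplus_{n\in\bZ}I_nt^n\subset R[t^{\pm1}]$ be the extended Rees algebra of \Cref{def_wblowup_algebra_in_all_degrees}, finitely generated over $R$ in degrees $-1,1,\dots,d$ by $t^{-1}$ and generators of $I_1,\dots,I_d$. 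Then $X'=\stProj_X\mathcal{R}=[(\spec_X\mathcal{R}^{ext}\setminus V(I_+))/\Gm]$ is covered by the $\Gm$-invariant affine opens $D_+(f)$, $f$ homogeneous of positive degree, and on each such chart the twisting sheaves $\cO_{X'}(m)$ are \emph{line bundles}; this is precisely what using $\stProj$ buys us and what permits weights $>1$. The key identification is that $t^{-1}\in\Gamma(X',\cO_{X'}(-1))$ cuts out the exceptional divisor $E$, since $\mathcal{R}^{ext}/(t^{-1})\cong\bigoplus_nI_n/I_{n+1}$ is the associated graded ring of the filtration and its $\stProj$ is $E$. Hence $E$ is an effective Cartier divisor with $\cO_{X'}(E)\cong\cO_{X'}(-1)$, so $I_E\cong\cO_{X'}(1)$ and $I_E^n\cong\cO_{X'}(n)$ for all $n\ge0$.

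For part (1): on a chart $D_+(f)$ the ideal $\pi^{-1}(I_n)\cdot\cO_{X'}$ is generated by the image of $I_n\subset R=\mathcal{R}^{ext}_0$; writing $I_n=t^{-n}\cdot(I_nt^n)$ inside $R[t^{\pm1}]$ with $I_nt^n\subset\mathcal{R}^{ext}_n$ exhibits this image as the result of applying the embedding $\cO_{X'}(n)\hookrightarrow\cO_{X'}$ cutting out $nE$ (given by $(t^{-1})^n$) to the image of $\mathcal{R}^{ext}_n$ in $\cO_{X'}(n)$, which visibly lies in $\cO_{X'}(n)=I_E^n$. This gives $\pi^{-1}(I_n)\cO_{X'}\subseteq I_E^n$. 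For the reverse inclusion: because $\mathcal{R}$ is generated in degrees $\le d$, once $n$ is divisible by a suitable multiple of the weights occurring on the given chart, $\cO_{X'}(n)$ is generated there by degree-$n$ monomials in the degree-$\le d$ generators of $\mathcal{R}$, all coming from $I_n$, so the inclusion is an equality — locally on $X$, with the divisibility possibly chart-dependent, as one already sees for the weighted blow-up of $\bA^2$ with weights $(1,2)$.

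For part (2): one direction is immediate from part (1). If $f=\pi\circ g$ and $U=T\setminus f^{-1}(Y)$ is schematically dense, then $g$ cannot factor through $E$ — over $X\setminus Y$ the ideal $I_1$, and hence every $I_n$, is the unit ideal so $\pi$ is an isomorphism there, and $U$ is dense — whence $D:=g^{-1}(E)$ is an effective Cartier divisor, and pulling back part (1) along $g$ gives $f^{-1}(I_n)\cO_T=g^{-1}(\pi^{-1}(I_n)\cO_{X'})\cdot\cO_T\subseteq g^{-1}(I_E^n)\cdot\cO_T=I_D^n$, with equality for sufficiently divisible $n$, locally on $T$. Conversely, given such a $D$, I would construct the factorization from the universal property of the stack-theoretic $\stProj$: take the line bundle $\mathcal{L}=\cO_T(-D)=I_D$ and the graded $\cO_T$-algebra map $\varphi\colon f^*\mathcal{R}\to\bigoplus_{n\ge0}\mathcal{L}^{\otimes n}$ which in degree $n$ is the map $f^*I_n\to I_D^n$ provided by the hypothesis $f^{-1}(I_n)\cO_T\subseteq I_D^n$. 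This $\varphi$ is a map of graded algebras since $I_n\cdot I_m\subseteq I_{n+m}$ lands in $I_D^n\cdot I_D^m=I_D^{n+m}$, and the clause ``equality for sufficiently divisible $n$, locally on $T$'' is exactly the non-degeneracy condition ensuring $\varphi$ defines a morphism $g\colon T\to X'$ into the complement of $V(I_+)$; one reads off $g^{-1}(E)=D$ from the fact that $t^{-1}$ then pulls back to a local generator of $I_D$. Uniqueness is automatic: over the dense open $U$ any factorization is the unique lift through the isomorphism $\pi|_{X\setminus Y}$, and since $U$ is schematically dense and $\pi$ is separated, two factorizations agreeing on $U$ coincide and the comparison isomorphism between them is unique, so the groupoid of factorizations is discrete, i.e.\ a set.

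The main obstacle I anticipate is not any single step but the bookkeeping around the two ``equality for sufficiently divisible $n$, locally'' clauses — pinning down how divisible $n$ must be in terms of $d$ and the weights, and verifying this stays intact under the pullbacks in part (2) — together with correctly formulating and checking the non-degeneracy condition in the universal property of the stack-theoretic $\stProj$, so that the $g$ we build genuinely lands in $[(\spec_X\mathcal{R}^{ext}\setminus V(I_+))/\Gm]$ and not in a larger stack. This last point is where it is essential that all the $\cO_{X'}(m)$ are line bundles, which is exactly what the stack-theoretic $\stProj$ provides.
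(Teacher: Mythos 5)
A preliminary but important remark: the paper does not prove \Cref{thm_univ_property_wblowup} at all --- it is quoted verbatim from Quek--Rydh (\cite[Thm. 3.2.9 and Cor. 3.2.14]{QR}), so there is no internal argument to compare yours against. On its own terms, your outline follows the standard Rees-algebra proof: working smooth-locally, identifying $E$ with $V(t^{-1})$ via $\mathcal{R}^{ext}/(t^{-1})\cong\bigoplus_n I_n/I_{n+1}$, getting $I_E\cong\cO_{X'}(1)$, proving (1) chart by chart (on $D_+(f)$ with $\deg f=e$ and $e\mid n$, the section $f^{n/e}$ lies in $I_nt^n$ because $I_e^{n/e}\subseteq I_n$ and generates $\cO_{X'}(n)$ there), and constructing the factorization in (2) from the pair $(\cL,\varphi)=(I_D,\{f^*I_n\to I_D^n\})$ via the universal property of the stacky $\stProj$, with the ``equality for sufficiently divisible $n$, locally'' clause supplying nondegeneracy. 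All of that is correct and is essentially how the cited result is proved.

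The one step I would not accept as written is the discreteness of the groupoid of factorizations. You argue that, since $\pi$ is separated and $U$ is schematically dense, ``two factorizations agreeing on $U$ coincide.'' For a target that is not representable over $X$ --- and $X'\to X$ is not representable, having $\bmu_{a_i}$-stabilizers along $E$ --- this principle is false in general: a $2$-isomorphism $g_1|_U\Rightarrow g_2|_U$ is a section over $U$ of the separated $T$-space $\underline{\Isom}_X(g_1,g_2)$, a pseudo-torsor under the pullback of the (finite) relative inertia of $\pi$, and a section over a schematically dense open of such a space need not extend (a nontrivial $\bmu_2$-torsor trivialized over $U$ is already a counterexample); separatedness gives at most uniqueness of an extension, never existence. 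The fix lives inside the framework you already set up: under the $\stProj$ universal property a factorization corresponds to a pair $(\cL,\varphi)$, and composing $\varphi_n$ with $\cL^{\otimes n}\cong I_D^n\subseteq\cO_T$ recovers the canonical map $f^*I_n\to\cO_T$; hence two factorizations inducing the same $D$ correspond to canonically isomorphic pairs, so a $2$-isomorphism exists, and an automorphism of such a pair is a unit $u\in\cO_T^\times$ with $u^n\varphi_n=\varphi_n$ for all $n$, which forces $u|_U=1$ (over $U$ every $\varphi_n$ is surjective onto $I_D^n|_U=\cO_U$) and then $u=1$ by schematic density. Two smaller points to tighten: in the first direction of (2) you should say explicitly that the local equation of $E$ pulls back to a nonzerodivisor on $T$ (it is a unit on the schematically dense $U$, so any annihilator vanishes), which is what makes $D=g^{-1}(E)$ an effective Cartier divisor; and $g^*(t^{-1})$ is a section of $g^*\cO_{X'}(-1)\cong\cO_T(D)$ with vanishing divisor $D$, not a generator of $I_D$ --- the intended statement, but worth phrasing correctly.
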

\begin{theorem}[Flat base change]\label{flat_bc_wblowups}
    Let $f: X' \rightarrow X$ be a flat morphism. Let $Y_\bullet \hookrightarrow X$ be a weighted embedding with ideals $I_n$ and let $Y'_\bullet \hookrightarrow X'$ the induced weighted embedding with ideals $f^{-1}(I_n)\cdot \cO_{X'}$. Then the following square is Cartesian 
    $$\begin{tikzcd}
    Bl_{Y'}X' \arrow[d] \arrow[r] & Bl_{Y}X \arrow[d] \\
    X' \arrow[r, dashed, hook]   & X.                \end{tikzcd}$$
\end{theorem}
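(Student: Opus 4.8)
The plan is to reduce the claim to the base-change compatibility of the stack-theoretic $\stProj$, with flatness of $f$ entering only to identify the pulled-back graded algebra with the (extended) Rees algebra of the induced weighted embedding. To set up, I would use the description of \Cref{def_wblowup_algebra_in_all_degrees}: writing $I_\bullet^{ext}=\bigoplus_{n\in\bZ}I_nt^n$ for the extended Rees algebra of $Y_\bullet\hookrightarrow X$, one has $Bl_YX=\stProj_X(I_\bullet^{ext})$, and setting $J_n:=f^{-1}(I_n)\cdot\cO_{X'}$ and $J_\bullet^{ext}:=\bigoplus_{n\in\bZ}J_nt^n$ one likewise has $Bl_{Y'}X'=\stProj_{X'}(J_\bullet^{ext})$.

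First I would record that, for the given $f$ (indeed for any morphism) and any quasi-coherent graded $\cO_X$-algebra $\cA$, there is a canonical isomorphism $\stProj_{X'}(f^*\cA)\cong\stProj_X(\cA)\times_XX'$ over $X'$. This is immediate from the construction $\stProj_X(\cA)=[(\spec_X(\cA)\smallsetminus V(\cA_+))/\bG_m]$: relative $\spec$ of a quasi-coherent algebra commutes with base change, the $\bG_m$-action is induced by the grading and hence pulls back, the irrelevant locus satisfies $V(\cA_+)\times_XX'=V((f^*\cA)_+)$ since $\cA_+$ is generated by homogeneous elements, and formation of quotient stacks commutes with base change. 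Applying this to $\cA=I_\bullet^{ext}$ gives $\stProj_{X'}(f^*I_\bullet^{ext})\cong Bl_YX\times_XX'$ over $X'$.

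Next I would show $f^*I_\bullet^{ext}\cong J_\bullet^{ext}$ as graded $\cO_{X'}$-algebras; this is the one place where flatness is used. Since $f$ is flat, $f^*$ is exact, so applying it to $I_n\hookrightarrow\cO_X$ shows $f^*I_n\to f^*\cO_X=\cO_{X'}$ is injective, whence $f^*I_n$ is canonically the ideal sheaf $f^{-1}(I_n)\cdot\cO_{X'}=J_n$; exactness of $f^*$ also gives $f^*(I_aI_b)=(f^*I_a)(f^*I_b)$ and $f^*(\sum_\alpha K_\alpha)=\sum_\alpha f^*K_\alpha$ as subsheaves of $\cO_{X'}$, so $f^*$ carries the graded-algebra structure of $I_\bullet^{ext}$ onto that of $J_\bullet^{ext}$. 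Along the way I would check that $Y_\bullet'$ is genuinely a weighted embedding in the sense of \Cref{def_weighted_embedding}: the containments $J_{n+1}\subseteq J_n$ and $J_aJ_b\subseteq J_{a+b}$ follow from their counterparts on $X$ by exactness of $f^*$, while pulling back a smooth chart $\{V\to X\}$ on which $I_n=(I_1^{l_1}\cdots I_d^{l_d}\ :\ \sum_i il_i=n)$ for $n\ge1$ gives a smooth chart $\{V\times_XX'\to X'\}$ on which the same formula holds for the $J_n$, because $f^{-1}(\,\cdot\,)\cdot\cO_{X'}$ commutes with the finitely many products and sums of ideals involved.

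Combining the two steps yields
\[
Bl_{Y'}X'=\stProj_{X'}(J_\bullet^{ext})\cong\stProj_{X'}(f^*I_\bullet^{ext})\cong Bl_YX\times_XX'
\]
over $X'$ and compatibly with the structure morphisms, which is exactly the claim that the square is Cartesian. (Alternatively one could argue from the universal property \Cref{thm_univ_property_wblowup}, checking that $Bl_YX\times_XX'\to X'$ enjoys the property characterising $Bl_{Y'}X'$ — here one uses $f^{-1}(Y)=Y'$ as closed subschemes, so that for a test map $g\colon T\to X'$ one has $(f\circ g)^{-1}(I_n)\cdot\cO_T=g^{-1}(J_n)\cdot\cO_T$ — but the $\stProj$ computation is more direct.) The one genuinely non-formal point is the identification $f^*I_n=f^{-1}(I_n)\cdot\cO_{X'}$: flatness is precisely what makes $f^*I_n\to\cO_{X'}$ injective, and the failure of this for a general morphism is exactly why (weighted) blow-ups are not stable under arbitrary base change. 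Everything else is formal, modulo the mild point that passing to a quotient stack rather than an honest scheme does not disturb base change — which it does not, since relative $\spec$, removal of the vertex, and the stacky quotient are each stable under base change.
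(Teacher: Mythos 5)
Your argument is correct, and it is the standard proof of this fact. Note, however, that the paper itself offers no proof: this statement is quoted directly from Quek--Rydh (\cite{QR}, Cor.\ 3.2.14), so there is nothing internal to compare against; your reduction --- stack-theoretic $\stProj$ commutes with arbitrary base change (relative $\spec$, removal of the irrelevant locus, and the $\bG_m$-quotient each do), while flatness enters only to identify $f^*I_n$ with $f^{-1}(I_n)\cdot\cO_{X'}$ and hence $f^*I_\bullet^{ext}$ with $J_\bullet^{ext}$ as graded algebras --- is essentially the argument in the cited reference, and all the steps you flag (exactness of $f^*$ giving injectivity of $f^*I_n\to\cO_{X'}$, compatibility with products and sums of ideals, descent of the generation-in-degrees-$\le d$ condition to a pulled-back smooth chart) check out.
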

\subsection{Exceptional divisor of a weighted blow-up}
In this subsection we introduce a special kind of blow-up that will be of interest in this article.
\begin{Def}[Exceptional divisor]
The inclusion $I_{\bullet + 1}\subset I_{\bullet}$ induces an inclusion $\cO_{Bl_YX}(1) \to \cO_{Bl_Y X}$. We define $\cE$ as the Cartier divisor associated with this inclusion. In particular by construction we have $\cO_{Bl_YX}(-\cE) \cong \cO_{Bl_YX}(1)$.
\end{Def}

Note that, as we can see in the next example, it is no longer the case that the diagram
$$\begin{tikzcd}
\cE \arrow[r] \arrow[d] & Bl_{Y}X \arrow[d] \\
Y \arrow[r] & X
\end{tikzcd}$$
is  Cartesian unless all the weights are $1$ (and we are in the case of an ordinary blow-up).

\begin{example} Consider the blow-up of the origin in $\bA^2$ computed in \ref{blowupoftheorigin}. Then the exceptional divisor corresponds to the the locus $V(s)$. In particular $$\cE \cong \stProj\left(k[x',y']\right)$$ with $x'$ in degree $1$ and $y'$ in degree $2$, which is exactly the weighted projective space $\cP(1,2)$. 
   On the other hand, with the same degrees, $$Y \times_X Bl_YX \cong \stProj\left(\frac{k[x',y',s]}{(sx', s^2y')}\right)$$ which has some nonreduced structure.
\end{example}

Nonetheless, it is the case that the exceptional divisor and the fiber product $Y \times_X Bl_YX$ have the same reduced structure. 
The following observation will be useful later. 
\begin{Oss}\label{rmk_wblowup_of_0_in_An} In the example \Cref{blowupoftheorigin},
 the exceptional divisor can be obtained as the zero locus $V(s)$. It follows that $\pi_*\cO_{Bl_0\bA^n}(-k\cE)=I_n$ and hence
   $Bl_0\bA^n \cong \stProj(\bigoplus_k \pi_*\cO_{Bl_0\bA^n}(-k\cE))$.
\end{Oss}

%Going back to our intuition, we want to identify the exceptional divisor with something that parameterizes "weighted normal directions". In particular, we want to describe $\cE$ as the projectivization of some generalization of the normal bundle.  \begin{Def}\Gio{I would get rid of this definition. @ Veronica if you agree just get rid of it} We define the weighted conormal algebra as $\mathscr{C}_{Y_\bullet}X:=\bigoplus_{n \ge 0} I_n/I_{n+1}$ and the weighted normal cone $C_{Y_\bullet}X=\spec_X(\mathscr{C}_{Y_\bullet}X)$. \end{Def} It follows that the exceptional divisor is $\cE \cong \stProj_X(\bigoplus_{n \ge 0} I_n/I_{n+1})$.

In this manuscript we will only deal with a special type of weighted blow-up, those which in \cite{QR}*{Sec. 5.2} come from a \textit{regular} weighted embedding, and which we call a \textit{regular} weighted blow-up:
\begin{Def}
    The weighted blow-up $B\to X$ of $X$ at $Y$ is a \textit{regular} weighted blow-up if:
    \begin{enumerate}
        \item there are positive integers $a_1$, ..., $a_n$, and
        \item there is a smooth cover $X'\to X$ with a map $X'\to \bA^n$ which is flat at the preimage of the origin, 
    \end{enumerate}
    such that $B\times_X X'\cong X'\times_{\bA^n}Bl_0\bA^n$, where $Bl_0\bA^n$ is the weighted blow-up of 0 in $\bA^n$ with weights $a_1,...,a_n$ as in \Cref{blowupoftheorigin}.
\end{Def}
In other terms, a {regular} weighted blow-up is a weighted blow-up which, locally in the smooth topology on the center, is the flat pull-back of the weighted blow-up of the origin in $\bA^n$ of \Cref{blowupoftheorigin}.
Observe also that, from \cite[\href{https://stacks.math.columbia.edu/tag/062H}{Tag 062H}]{stacks-project}, the preimage of $x_1,...,x_n$ is a regular sequence in $X'$. So the center $Y$ has as ideal sheaf a regular sequence.
\begin{Remark}
    Let $X$ be a scheme and let $f_1,...,f_n$ a regular sequence in $\cO_X$. Then by \cite{Eisenbud}*{Ex. 18.18} the map $X\to \bA^n$ given by the $f_i$ is flat along the preimage of the origin in $\bA^n$.
    In particular, from \Cref{flat_bc_wblowups}, the weighted blow-up of $V(f_1,...,f_n)$ where $f_i$ has weight $a_i$ is a {regular weighted} blow-up, and it is the pull-back of the
    weighted blow-up of the origin in $\bA^n$, with weights $a_1,...,a_n$. So a weighted regular blow-up is the blow-up given by a sequence of ideal sheaves which in \cite{QR}*{Sec. 5.2} is called a regular weighted embedding.
\end{Remark}
\begin{Def}
    A \textit{weighted affine bundle} is a $\bG_m$-equivariant $\bA^n$-bundle $\cN \to Y$ where locally in the smooth topology $\bG_m$ acts linearly on $\mathbb{A}^n$ with positive weights $a_1,...,a_n \in \mathbb{Z}$. Moreover, a \textit{weighted projective bundle} over $Y$ is the stack quotient $\cP(\cN)=[\cN \setminus \cN_0/\bG_m]$, where $\cN_0$ is the zero section in $\cN$.
\end{Def}

\begin{example}\label{twistedweightedvectorbundle}
Let $\cN=\spec_Y(R)$ be a weighted affine bundle of rank 2 with weights (1,2), and suppose
there is a Zariski covering $Y=U\cup U'$ such that $R|_U\cong \cO_Y[x,y]$ and $R|_{U'}\cong \cO_Y[x',y']$ with $\Gm$ acting by multiplication with weight 1 on $x,x'$ and weight 2 on $y,y'$.
The transition functions for $\cN$ will map $$x' \mapsto \alpha x;  \quad y' \mapsto \beta y+\gamma x^2$$ with $\alpha, \beta \in \Gamma(U \cap U', \cO^\times_{U \cap U'})$, $\gamma \in \Gamma(U \cap U', \cO_{U \cap U'})$. 
\end{example}

\begin{Remark}
    From \cite{QR}*{Prop. 5.1.4}, the exceptional divisor of a weighted blow-up is a weighted projective bundle if and only if it comes from a {regular} weighted blow-up. In particular for us, as we only deal with {regular} weighted blow-ups, all the exceptional divisors will be weighted projective bundles over the center of the weighted blow-up.
\end{Remark}
We end this section with the following proposition:
\begin{Prop}\label{prop_checking_wblowup_smooth_locally}
    Let $\pi:\cX\to \cY$ be a morphism of algebraic stacks, and assume that:
    \begin{enumerate}
    \item There is a smooth cover $Y\to \cY$ such that $\pi_Y:\cX\times_{\cY}Y\to Y$ is a weighted blow-up, and
        \item There is a Cartier divisor $\cE\subseteq \cX$ such that $\cE\times_\cY Y$ is the exceptional divisor.
    \end{enumerate}
    Then $\cX\to \cY$ is a weighted blow-up.
\end{Prop}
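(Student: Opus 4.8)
The strategy is to reconstruct a weighted embedding directly on $\cY$ by pushing forward the twists of $\cE$. Set $I_n:=\pi_*\cO_\cX(-n\cE)$ for $n\ge 0$. The Cartier divisor $\cE$ provides inclusions $\cO_\cX(-n\cE)\hookrightarrow\cO_\cX(-(n-1)\cE)$ and multiplication maps $\cO_\cX(-n\cE)\otimes\cO_\cX(-m\cE)\xrightarrow{\sim}\cO_\cX(-(n+m)\cE)$. First I would observe that $\pi$ is proper: a weighted blow-up is proper, and properness descends along the fppf cover $Y\to\cY$. Hence each $I_n$ is coherent, and by \Cref{rmk_wblowup_of_0_in_An} applied over $Y$ (together with flat base change for $\pi_*$) one gets $I_0=\pi_*\cO_\cX=\cO_\cY$. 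Viewing the $I_n$ as coherent ideal sheaves of $\cO_\cY$ through this identification, the inclusions and multiplications above give $I_0\supseteq I_1\supseteq\cdots$ and $I_n\cdot I_m\subseteq I_{n+m}$ for free; so the only condition of \Cref{def_weighted_embedding} left to verify is that $I_\bullet:=\bigoplus_{n\ge 0}I_nt^n$ is generated in degrees $\le d$ locally in the smooth topology.

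To verify this condition — and to eventually identify $\cX$ with the resulting blow-up — I would pull back along the smooth cover $g:Y\to\cY$. Writing $\cX_Y:=\cX\times_\cY Y$ and $\cE_Y:=\cE\times_\cY Y$, flat base change along $g$ gives $g^*I_n\cong\pi_{Y*}\cO_{\cX_Y}(-n\cE_Y)$, compatibly with the graded structures. Since $\cE_Y$ is by hypothesis the exceptional divisor of the weighted blow-up $\pi_Y:\cX_Y\to Y$, \Cref{rmk_wblowup_of_0_in_An} together with \Cref{flat_bc_wblowups} identifies $g^*I_\bullet$ with the Rees algebra of the weighted embedding defining $\pi_Y$; in particular it is generated in degrees $\le d$ for a suitable $d$. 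As $g$ is a smooth cover, this shows $I_\bullet$ is a weighted embedding on $\cY$, and \Cref{flat_bc_wblowups} identifies $\stProj_\cY(I_\bullet)\times_\cY Y$ with $\stProj_Y(g^*I_\bullet)=\cX_Y$.

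Finally I would produce a canonical morphism $f:\cX\to Bl:=\stProj_\cY(I_\bullet)$ over $\cY$ via the universal property \Cref{thm_univ_property_wblowup}(2). The open substack $\cX\smallsetminus\pi^{-1}(V(I_1))$ is schematically dense: after the faithfully flat base change $g$ its preimage is the complement of the Cartier divisor $\cE_Y$, and schematic density descends along faithfully flat quasi-compact morphisms. Moreover $\cE$ is an effective Cartier divisor on $\cX$ with $\pi^{-1}(I_n)\cdot\cO_\cX\subseteq\cO_\cX(-n\cE)$, with equality for sufficiently divisible $n$ locally on $\cX$ — both assertions may be checked on $\cX_Y$, where they follow from \Cref{thm_univ_property_wblowup}(1) applied to $\pi_Y$. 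Hence $\pi$ factors as $\cX\xrightarrow{f}Bl\to\cY$ with $f^{-1}(\cE_{Bl})=\cE$. Being an isomorphism is smooth-local on $\cY$, and the base change of $f$ along $g$ is the identity of $\cX_Y=Bl\times_\cY Y$ by the uniqueness clause of \Cref{thm_univ_property_wblowup}(2); therefore $f$ is an isomorphism, exhibiting $\cX\to\cY$ as the weighted blow-up of $\cY$ along $V(I_1)$.

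I expect the only genuinely delicate point to be the flat base change identification $g^*\pi_*\cO_\cX(-n\cE)\cong\pi_{Y*}\cO_{\cX_Y}(-n\cE_Y)$ and its compatibility with the graded algebra structures — this is where properness (hence quasi-compactness and quasi-separatedness) of $\pi$ is used. Granting this, the remainder is bookkeeping: being a weighted embedding, schematic density, and being an isomorphism are all properties that can be tested after the smooth surjective base change $Y\to\cY$.
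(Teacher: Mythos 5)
Your argument is correct and rests on the same key computation as the paper's, namely that the sheaves $I_n=\pi_*\cO_\cX(-n\cE)$ pull back along $Y\to\cY$ to the Rees ideals $J_n$ of the given weighted blow-up $\pi_Y$, but your way of concluding is genuinely different. The paper does not invoke the universal property \Cref{thm_univ_property_wblowup} at all: it uses the quotient description of \Cref{def_wblowup_algebra_in_all_degrees}, takes $\cL=\cO_\cX(-\cE)$ and the identity map $\bigoplus I_n\to\bigoplus\pi_*\cL^{\otimes n}$, and reduces the check that the induced $\Gm$-equivariant morphism $\spec_{\cO_\cX}(\bigoplus\cL^{\otimes n})\to\spec_{\cO_\cY}(\bigoplus I_n)$ is an open immersion away from $V(\bigoplus_{n>0}I_n)$ to the base-changed situation over $Y$, where it verifies $I_n=J_n$ directly. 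You instead check that $I_\bullet$ is a weighted embedding by smooth descent, build a comparison map $\cX\to \stProj_\cY(I_\bullet)$ via \Cref{thm_univ_property_wblowup}(2), and show it is an isomorphism after base change to $Y$ using the uniqueness clause together with \Cref{flat_bc_wblowups}. Both routes work; yours makes the descent steps (schematic density, the ideal inclusions, being an isomorphism) more explicit, while the paper's avoids having to verify the hypotheses of the universal property.

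One point to tighten: you justify the crucial identity $\pi_{Y*}\cO_{\cX_Y}(-n\cE_Y)=J_n$ (and $\pi_{Y*}\cO_{\cX_Y}=\cO_Y$) by citing \Cref{rmk_wblowup_of_0_in_An}, but that remark only treats the model $Bl_0\bA^n$; transporting it to $\pi_Y$ by flat base change presupposes that $\pi_Y$ is smooth-locally a flat pullback of that model, i.e.\ a \emph{regular} weighted blow-up, which the hypotheses of \Cref{prop_checking_wblowup_smooth_locally} do not grant. The paper supplies the argument valid for an arbitrary weighted blow-up in one line: since $\cO_{\cX_Y}(-\cE_Y)\cong\cO_{\cX_Y}(1)$ corresponds to the inclusion $J_{\bullet+1}\subset J_\bullet$, the pushforward of $\cO_{\cX_Y}(-n\cE_Y)$ is the degree-zero (i.e.\ $\Gm$-invariant) part, which is $J_n$. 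Substituting that argument for your citation, your proof goes through as written.
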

In particular, if one has the Cartier divisor $\cE$, one can check that a morphism is a weighted blow-up locally on the target.
\begin{proof}
    There is an embedding $\cO_{\cX}(-m\cE)\to \cO_\cX$ given by the Cartier divisor $\cE$ for every $m\ge 0$,
    which induce embeddings $i_m:\pi_*\cO_{\cX}(-m\cE)\to \pi_*\cO_{\cX}$. The canonical map $\cO_\cY\to \pi_*\cO_\cX$ is an isomorphism. Indeed, it suffices to check that it is such locally on $Y$, and on $Y$ it is an isomorphism as $\pi_Y$ is a weighted blow-up.
    In particular, $i_m$ gives an ideal sheaf $I_m$ for every $m\ge 0$.
    
    Consider then $\cX'$ the weighted blow-up associated to the weighted embedding given by the $I_n$. We aim to show that $\cX\cong \cX'$ using the quotient description of a weighted blow-up of \Cref{def_wblowup_algebra_in_all_degrees}. In particular, it suffices to find a line bundle $\cL$ on $\cX$, and a $\Gm$-equivariant morphism $$\spec_{\cO_\cX}\left(\bigoplus_{n\in \bZ}\cL^{\otimes n}\right)\to \spec_{\cO_\cY}\left(\bigoplus_{n\in \bZ}I_n\right)$$
    where $I_n=\cO_\cY$ for $n\le 0$,
    which is an open embedding landing away from $V(\bigoplus_{n>0}I_n)$, the closed substack given by the vanishing of the ideal sheaf generated by $\bigoplus_{n>0}I_n$. A morphism as the one above is equivalent to a morphism as below over $\cY$
    $$\bigoplus_{n\in \bZ}I_n\to \bigoplus_{n\in \bZ}\pi_*(\cL^{\otimes n}).$$

    One can take $\cL:=\cO_{\cX}(-\cE)$, and the identity map $I_n = \pi_* \cO_{\cX}(-n\cE)\to \pi_* \cO_{\cX}(-n\cE)$ for every $n$. 
    Then it suffices to check the desired statement replacing $\cX$ with $X:=\cX\times_\cY Y$ and $\cY$ with $Y$. 
    In this case, let $J_\bullet$ be the sequence of ideals of the weighted blow-up $X\to Y$, and let $E$ be the exceptional divisor. As $\cO_X(-E)$ is the ideal sheaf corresponding to the natural inclusion $J_{\bullet + 1}\to J_\bullet$ (see \cite{QR}*{Def. 3.2.2}), one has that $\pi_*\cO_X(-nE)$ consists of the invariant elements, namely the elements of degree 0, so $\pi_*\cO_X(-nE) = J_n$. In particular $I_n = J_n$ for every $n\in \bZ$ as desired. 
\end{proof}

 \section{Reconstruction results for smooth Deligne-Mumford stacks}\label{section_prelims}
 %In what follows, all the stacks are assumed to be connected. Here is the main theorem of this section, which roughly speaking says that a smooth separated DM stack is determined by its coarse space and its stacky points of codimension up to one.

Fix a field $k$ and a smooth separated tame Deligne-Mumford stack $\mathcal{Y}$ over $k$. By \cite[Thm. 1]{GS17} if $\cY$ has trivial generic stabilizer, one may factor the coarse space morphism $\pi\colon \mathcal{Y} \to Y$ as follows
\[\mathcal{Y} \simeq \sqrt{Y^{\mathrm{can}}/(D,\mathbf{e})}^{\mathrm{can}} \to \sqrt{Y^{\mathrm{can}}/(D,\mathbf{e})} \to Y^{\mathrm{can}} \to Y \]
where $(-)^{\mathrm{can}}$ denotes the canonical stack associated to a stack with finite tame quotient singularities (see \cite[Sec. 2]{GS17}), $(D,\mathbf{e})$ denotes the ramification data of $\cY \to Y^{\mathrm{can}}$ and $\sqrt{Y^{\mathrm{can}}/(D,\mathbf{e})}$ denotes the associated root stack. Using this we will prove the following:

 %\label{rem:technicalcondition} Let $u\colon \mathscr{U} \to U$ be the coarse moduli map of a smooth tame Deligne-Mumford stack $\mathscr{U}$ and suppose we are given an open immersion  $U \subset X$ where $X$ is a separated $k$-scheme with finite quotient singularities and $U$ contains all codimension $\leq 1$ points of $X$. Let $D_i \subset U^{\mathrm{can}}$ denote the ramification divisors of $u$ with ramification degrees $e_i$. The $D_i$ extend uniquely to Cartier divisors $\mathcal{D}_i$ on the canonical stack $X^{\mathrm{can}}$. Assume that the root stack of $X^{\mathrm{can}}$ with respect to the $\mathcal{D}_i$ with degrees $e_i$

%In this section we prove that if $\cX$ is regular, it can be recovered from $X$ and an open substack $\cU \subset \cX$ containing all codimension $1$ points of $\cX$.
  \begin{Teo}\label{thm_DM_stack_is_determined_by_cms_and_codim1}
     Let $X$ be a separated $k$-scheme of finite type and suppose $U$ is a dense open subscheme of $X$ with complement of codimension $\geq 2$. Consider the coarse moduli map $\cU \xrightarrow{\mathrm{cms}} U$ where $\cU$ is a smooth tame Deligne-Mumford stack and let $(D,\mathbf{e})$ denote the (unique) extension of the ramification data of $\cU \to U^{\mathrm{can}}$ to $X^{\mathrm{can}}$, when the latter exists. If $X$ and $\sqrt{X^{\mathrm{can}}/(D,\mathbf{e})}$ both have finite tame quotient singularities, then there is a unique smooth tame Deligne-Mumford stack $\cX$ fitting into the cartesian square
      $$ 
    \begin{tikzcd}
    \cU \arrow[d] \arrow[r, dashed, hook, "\mathrm{open}"] & \cX \arrow[d, dashed, "\mathrm{cms}"] \\
    U \arrow[r, hook]   & X.                  
    \end{tikzcd}
    $$
     %such that its coarse moduli space is isomorphic to $X$, and with an open embedding $\cU\to \cX$ such that the compositions $\cU\to \cX\to X$ and $\cU\to U \to X$ agree.
 \end{Teo}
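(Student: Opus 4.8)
The plan is to construct $\cX$ by running the factorization of \cite{GS17} recalled above \emph{in reverse}, now over all of $X$, and then to use the codimension hypothesis to see that every competitor is produced the same way. First, since $X$ has finite tame quotient singularities its canonical stack $X^{\mathrm{can}}$ exists; as the formation of $(-)^{\mathrm{can}}$ commutes with the open immersion $U\hookrightarrow X$ (it is an isomorphism over the smooth locus and is étale-local on the base), we have $X^{\mathrm{can}}\times_X U\simeq U^{\mathrm{can}}$, and the complement of $U^{\mathrm{can}}$ in $X^{\mathrm{can}}$ still has codimension $\geq 2$. Next, using the hypothesis that the extension $(D,\mathbf{e})$ to $X^{\mathrm{can}}$ of the ramification data of $\cU\to U^{\mathrm{can}}$ exists, form the root stack $\cW:=\sqrt{X^{\mathrm{can}}/(D,\mathbf{e})}$; since $X^{\mathrm{can}}$ is smooth the divisor $D$ is Cartier, so this is an honest iterated root stack, it does not alter the coarse space, and it commutes with the flat base change $U\hookrightarrow X$, so the coarse space of $\cW$ is $X$ and $\cW\times_X U\simeq\sqrt{U^{\mathrm{can}}/(D|_U,\mathbf{e})}$. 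Finally, by hypothesis $\cW$ has finite tame quotient singularities, so we may set $\cX:=\cW^{\mathrm{can}}$.

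It then remains to check that $\cX$ does the job. It is smooth (canonical stacks are smooth), separated (root stacks and canonical stacks of separated stacks over a separated base are separated), and tame: the multiplicities $\mathbf{e}$ agree with those appearing in the tame stack $\cU$, hence are prime to $\mathrm{char}\,k$, and the remaining operations preserve tameness. Its coarse space is still $X$, because $\cX\to\cW$ is a proper representable morphism which is an isomorphism in codimension $1$ and hence induces an isomorphism on coarse spaces. Restricting everything over $U$ and using again that $(-)^{\mathrm{can}}$ and the root stack commute with $U\hookrightarrow X$, we obtain
\[\cX\times_X U\;\simeq\;\bigl(\cW\times_X U\bigr)^{\mathrm{can}}\;\simeq\;\sqrt{U^{\mathrm{can}}/(D|_U,\mathbf{e})}^{\mathrm{can}}\;\simeq\;\cU,\]
the last step being precisely the factorization recalled above applied to $\cU\to U$, since $(D|_U,\mathbf{e})$ is by construction the ramification data of $\cU\to U^{\mathrm{can}}$. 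Thus the square in the statement is cartesian with $\cU\hookrightarrow\cX$ the open immersion lying over $U\hookrightarrow X$.

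For uniqueness, let $\cX'$ be another smooth tame separated Deligne--Mumford stack with coarse space $X$ and $\cX'\times_X U\simeq\cU$ over $U$. Applying the factorization of \cite{GS17} to $\cX'$ gives $\cX'\simeq\sqrt{X^{\mathrm{can}}/(D',\mathbf{e}')}^{\mathrm{can}}$, where $(D',\mathbf{e}')$ is the ramification data of $\cX'\to X^{\mathrm{can}}$; restricting over $U$ and comparing with the factorization of $\cU$ shows that $(D'|_U,\mathbf{e}')$ is the ramification data of $\cU\to U^{\mathrm{can}}$, so $(D',\mathbf{e}')$ is an extension of it to $X^{\mathrm{can}}$. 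Since $U^{\mathrm{can}}\subseteq X^{\mathrm{can}}$ has complement of codimension $\geq 2$, prime divisors on $X^{\mathrm{can}}$ correspond bijectively, by taking closures, to prime divisors on $U^{\mathrm{can}}$, so such an extension is unique; hence $(D',\mathbf{e}')=(D,\mathbf{e})$ and $\cX'\simeq\cX$ compatibly with the structure over $X$.

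The main obstacle is not a single hard step but the bookkeeping: one must verify carefully that the formation of the canonical stack, the formation of the root stack along ramification data, and their composition all commute with restriction to the dense open $U$, and that the codimension $\geq 2$ hypothesis simultaneously (i) keeps $U^{\mathrm{can}}\subseteq X^{\mathrm{can}}$ a big open, so that a ramification divisor on $X^{\mathrm{can}}$ is recoverable from its trace on $U^{\mathrm{can}}$, and (ii) is exactly what forces the asserted uniqueness. The two ``finite tame quotient singularities'' hypotheses are used precisely to license the two applications of $(-)^{\mathrm{can}}$, and one should also note that $\cU$, and likewise any competitor $\cX'$, has trivial generic stabilizer, which is what makes the factorization of \cite{GS17} applicable.
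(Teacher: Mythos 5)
There is a genuine gap: your entire argument hinges on the parenthetical claim at the end that $\cU$ (and any competitor $\cX'$) has trivial generic stabilizer, but nothing in the hypotheses forces this, and it is false in general. A smooth tame separated Deligne--Mumford stack with coarse space $U$ may be generically a gerbe --- e.g.\ $\cU=U\times B\bmu_n$ has coarse moduli space $U$ --- and the theorem (as it is used later in the paper, where $\cU=\cX\smallsetminus\cE$ need not have generically trivial stabilizers) must cover this case. Your construction $\cX:=\sqrt{X^{\mathrm{can}}/(D,\mathbf{e})}^{\mathrm{can}}$ always produces a stack with trivial generic stabilizer, so in the gerby case $\cX\times_X U$ is the rigidification of $\cU$, not $\cU$ itself, and the key identification $\cX\times_X U\simeq\cU$ fails; the factorization of \cite{GS17} that you invoke for this step, and again for the competitor $\cX'$ in the uniqueness argument, is only available under the trivial-generic-stabilizer hypothesis.

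For the generically representable part your argument is fine, but it essentially reproduces what the paper obtains by citing \cite[Thm. 1.1]{GS17}: the paper first factors $\cU\to U$ as a gerbe $\cU\to\cU_1$ followed by the coarse map of a stack $\cU_1$ with generically trivial stabilizers, applies \cite{GS17} to extend $\cU_1$ to $\cX_1$ over $X$ (this is your construction), and then --- and this is the missing step in your proposal --- extends the gerbe $\cU\to\cU_1$ across the codimension $\geq 2$ locus to a gerbe $\cX\to\cX_1$. That extension is the real content of the section: it rests on the equivalence of categories of finite \'etale covers (\Cref{lemma_fet_equivalence}), extension of banded gerbes via purity of the Brauer group and a d\'evissage to non-abelian bands (\Cref{lemma_extension_gerbes}), and uniqueness of such extensions (\Cref{uniquenessGerbe}), assembled in \Cref{cor_extend_gerbes}. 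To repair your proof you would either have to add this gerbe-extension argument or add the hypothesis that $\cU$ has trivial generic stabilizer, which would weaken the theorem below what the rest of the paper needs.
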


 \begin{example}
     The conclusion of Theorem \ref{thm_DM_stack_is_determined_by_cms_and_codim1} fails without the condition that $\sqrt{X^{\mathrm{can}}/(D,\mathbf{e})}$ has finite tame quotient singularities. Indeed, take $X=\mathbb{A}^3$ and the Cartier divisor $D=V(xy+z^2) \subset X$, and consider the root stack of degree $2$ of $U=X\smallsetminus\{(0,0,0)\}$ along $D^{\circ}=D \smallsetminus \{(0,0,0)\}$ and denote it by $\mathcal{U}=\sqrt{U/D^{\circ}}$. Here $\mathcal{U}$ is smooth because $D^{\circ}$ is smooth. Indeed, $\mathcal{U}$ is the stack quotient of $\spec (\mathcal{O}_U[T]/(T^n-(xy+z^2)))$ by $\bmu_n$ with the obvious action and the latter is smooth over $U$. However, there is no smooth separated Deligne-Mumford stack $\cX$ with coarse moduli space $X$ which is ramified at $D$ with degree $2$ (see \cite[Ex. 7]{GS17}). \end{example}
 %See also \cite[Remark 2 and Theorem 1]{GS17} for a similar statement. 

 \begin{Remark} \label{rem:etloccheck} Suppose $X$ is as in the first two sentences of the statement of Theorem \ref{thm_DM_stack_is_determined_by_cms_and_codim1}.  Since ramification data, the root stack and canonical stack constructions are all compatible with \'etale base change then the condition that $X$ and $\sqrt{X^{\mathrm{can}}/(D,\mathbf{e})}$ both have finite tame quotient singularities can be checked after replacing $X$ with an \'etale cover $X' \to X$. Moreover, it automatically holds for such an $X'$ if there is a smooth tame Deligne-Mumford stack $\cX'$ fitting into a cartesian diagram
 $$\xymatrix{\cU'\ar[r] \ar[d] & \cX' \ar[d] \\ U'\ar[r] & X'.}$$

\noindent where $U'=U \times_{X} X'$, $\cU'=\cU \times_{X} X'$, and both $\cU' \to U'$ and $\cX' \to X'$ are coarse moduli maps. 
 
 \end{Remark}
 
 We begin by analyzing the category of finite \'etale covers $\mathrm{Fet}_X$ over a stack $X$.
\begin{Lemma}\label{lemma_fet_equivalence}
    Let $Y$ be a regular Deligne-Mumford stack and $V\subseteq Y$ an open substack with complement of codimension at least 2. Then the functor 
    \[X \to Y \mapsto X \times_Y V \to V\]
    induces an equivalence of categories between $\mathrm{Fet}_Y$ and $ \mathrm{Fet}_V$.
\end{Lemma}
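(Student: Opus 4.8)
The plan is to reduce the statement to the case where $Y$ is a scheme and then invoke the Zariski--Nagata purity theorem for finite \'etale covers. For the reduction, pick an \'etale atlas $Y_0\to Y$ with $Y_0$ a scheme, and set $Y_1=Y_0\times_Y Y_0$, $Y_2=Y_0\times_Y Y_0\times_Y Y_0$ (algebraic spaces, \'etale over $Y_0$), and $V_i=V\times_Y Y_i$. Since $Y$ is regular, each $Y_i$ is regular, and since the maps $Y_i\to Y$ are \'etale, hence codimension preserving, the complements $Y_i\smallsetminus V_i$ still have codimension $\ge 2$. Because $\mathrm{Fet}$ is an \'etale stack, $\mathrm{Fet}_Y$ (resp.\ $\mathrm{Fet}_V$) is the category of descent data for finite \'etale covers along $Y_0\to Y$ (resp.\ $V_0\to V$), and this description involves only $Y_0,Y_1,Y_2$ (resp.\ $V_0,V_1,V_2$) and is respected by the restriction functor; so it suffices to prove the lemma when $Y$ is a regular excellent algebraic space, and — since both sides and the functor are compatible with \'etale localization on $Y$ — we may assume $Y$ is a regular excellent scheme, which we may moreover take to be integral by treating connected components separately.

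\emph{Full faithfulness.} For $X_1,X_2\in\mathrm{Fet}_Y$, a $Y$-morphism $X_1\to X_2$ is the same datum as a section of the finite \'etale morphism $p\colon X_1\times_Y X_2\to X_1$, equivalently an open-and-closed subscheme $W\subseteq X_1\times_Y X_2$ with $p|_W$ an isomorphism. Now $X_1\times_Y X_2$ is regular (finite \'etale over $Y$), hence locally irreducible, so its open-and-closed subschemes are exactly the unions of its irreducible components; moreover $X_1\times_Y X_2\to Y$ is flat, so $(X_1\times_Y X_2)\times_Y V$ is dense and meets every irreducible component. Hence $W\mapsto W\times_Y V$ is a bijection on open-and-closed subschemes, and — using that a finite \'etale morphism which has degree one over a dense open subset of a regular scheme is an isomorphism — one checks that it matches up the subschemes on which $p$ (resp.\ its base change to $V$) restricts to an isomorphism. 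This gives the bijection $\Hom_Y(X_1,X_2)\xrightarrow{\ \sim\ }\Hom_V(X_1|_V,X_2|_V)$.

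\emph{Essential surjectivity.} Given $X_V\to V$ finite \'etale, let $X\to Y$ be the relative normalization of $Y$ in the finite-type morphism $X_V\to V\hookrightarrow Y$. Since $Y$ is excellent (hence Nagata), $X\to Y$ is finite and $X$ is normal; because relative normalization commutes with restriction to the open $V$, and every element of a finite $\cO_V$-algebra is integral over $\cO_V$, we get $X|_V\cong X_V$ over $V$. It remains to prove $X\to Y$ is \'etale. The locus $B\subseteq Y$ over which $X\to Y$ fails to be \'etale is closed (as $X\to Y$ is finite) and disjoint from $V$, so $B\subseteq Y\smallsetminus V$ has codimension $\ge 2$; but the Zariski--Nagata purity theorem for the finite morphism $X\to Y$ from a normal scheme to a regular one (see SGA~2, Exp.~X, or \cite{stacks-project}) says $B$ is either empty or pure of codimension one. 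Hence $B=\varnothing$, so $X\to Y$ is finite \'etale and restricts to $X_V\to V$.

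\emph{Main obstacle.} The crux is essential surjectivity, which is exactly a form of Zariski--Nagata purity of the branch locus; once that classical input is available, the reduction to schemes and the full-faithfulness argument are formal, the only thing to verify being that regularity and codimension $\ge 2$ are preserved under the \'etale maps $Y_i\to Y$ and under passage to $X_1\times_Y X_2$.
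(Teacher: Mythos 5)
Your proposal is correct and follows essentially the same route as the paper: the paper simply cites the scheme case (SGA~2, Exp.~X) and says the stacky case follows by descent along an \'etale atlas, which is exactly your reduction. The only difference is that you additionally spell out the scheme case (full faithfulness via clopen subschemes of $X_1\times_Y X_2$, essential surjectivity via relative normalization plus Zariski--Nagata purity), which is the content of the cited reference and is carried out correctly, using the excellence hypothesis the paper's conventions provide.
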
 
When $Y$ is a scheme this is  \cite[Exp. 10, Cor. 3.3]{zbMATH01992950}. The general case follows by descent and we omit the proof.

%\begin{proof}
%Let $H$ be the normalization of $Y$ in the fraction fields of $H_V$. Then $H$ is normal, the map $H\to Y$ is  finite, and on codimension one it agrees with $H_V\to V$. Then it is unramified in codimension one, so it is \'etale from \cite[\href{https://stacks.math.columbia.edu/tag/0BMB}{Tag 0BMB}]{stacks-project}.
%The moreover part follows from the functoriality of the normalization.\Gio{details?}\A{I think it's fine}
%\end{proof}

\begin{Lemma} \label{lemma:trivialgerbeopen} Let $Y$ be a regular Deligne-Mumford stack and $V \subset Y$ an open substack with complement of codimension at least $2$. If $X \to Y$ is a separated Deligne-Mumford gerbe such that $X \times_Y V$ is trivial (i.e. admits a section $\sigma$), then $X \to Y$ admits a section which extends $\sigma$. \end{Lemma}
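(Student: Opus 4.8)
\emph{Overview.} The plan is to extend $\sigma$ by descent along an \'etale cover of $Y$ that trivializes the gerbe, applying \Cref{lemma_fet_equivalence} at each stage. Since $X\to Y$ is a separated Deligne--Mumford gerbe, its relative inertia $I_{X/Y}\to X$ is finite \'etale (\'etale-locally on $Y$ it is the form $G\to U$ of a finite \'etale group scheme $G$, which is finite, unramified because $X$ is Deligne--Mumford, and flat because $X\to Y$ is a gerbe); in particular $X\to Y$ is trivial \'etale-locally on $Y$.

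\emph{Local data.} Choose an \'etale surjection $\{U_i\to Y\}_i$ with each $U_i$ a regular algebraic space (regular because $Y$ is) together with equivalences $\phi_i\colon X|_{U_i}\xrightarrow{\sim}B_{U_i}G_i$ over $U_i$, for finite \'etale group schemes $G_i\to U_i$. Write $U_i^\circ:=U_i\times_Y V$, and likewise $U_{ij}^\circ$, $U_{ijk}^\circ$ over double and triple overlaps; since $Y\smallsetminus V$ has codimension $\ge 2$ and \'etale maps preserve codimension, the complements $U_i\smallsetminus U_i^\circ$ (and those over the overlaps) still have codimension $\ge 2$. Transporting $\sigma$ through $\phi_i$ gives a section of $B_{U_i^\circ}(G_i|_{U_i^\circ})$, that is, a $G_i$-torsor $Q_i$ on $U_i^\circ$; as $G_i$ is finite \'etale, $Q_i$ is finite \'etale over $U_i^\circ$, so by \Cref{lemma_fet_equivalence} it extends uniquely to $\overline Q_i\in\mathrm{Fet}_{U_i}$. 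Because the equivalence $\mathrm{Fet}_{U_i}\simeq\mathrm{Fet}_{U_i^\circ}$ preserves fiber products and is fully faithful, the $G_i$-action and the action axioms extend, and the resulting morphism $G_i\times_{U_i}\overline Q_i\to\overline Q_i\times_{U_i}\overline Q_i$ is an isomorphism because it is one after restriction to $U_i^\circ$ and an equivalence reflects isomorphisms. Hence $\overline Q_i$ is a $G_i$-torsor on $U_i$ restricting to $Q_i$.

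\emph{Gluing.} Over $U_{ij}:=U_i\times_Y U_j$ the equivalence $\psi_{ij}:=\phi_i\circ\phi_j^{-1}\colon B_{U_{ij}}G_j\to B_{U_{ij}}G_i$ carries $\overline Q_j|_{U_{ij}}$ to a $G_i$-torsor $R_{ij}$ on $U_{ij}$, and the compatibility of the $\phi_i$ with $\sigma$ produces over $U_{ij}^\circ$ a canonical isomorphism of $G_i$-torsors $\alpha_{ij}\colon R_{ij}|_{U_{ij}^\circ}\xrightarrow{\sim}Q_i|_{U_{ij}^\circ}=\overline Q_i|_{U_{ij}^\circ}$; these satisfy the cocycle identity over the triple overlaps $U_{ijk}^\circ$ because $\sigma$ is an honest section over $V$. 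Now $\alpha_{ij}$ is a section over $U_{ij}^\circ$ of the scheme $\uIsom_{G_i}(R_{ij},\overline Q_i|_{U_{ij}})$, which is finite \'etale over the regular algebraic space $U_{ij}$, so by \Cref{lemma_fet_equivalence} it extends to a section $\overline\alpha_{ij}$ over $U_{ij}$. The cocycle identity for $\{\overline Q_i,\overline\alpha_{ij}\}$ is an equality of morphisms between finite \'etale $G_i$-torsors over $U_{ijk}$; it holds over the dense open $U_{ijk}^\circ$ and hence over all of $U_{ijk}$ by full faithfulness in \Cref{lemma_fet_equivalence}. Therefore $\{\overline Q_i,\overline\alpha_{ij}\}$ is a descent datum, and it glues to a section $s\colon Y\to X$ of the gerbe with $s|_V=\sigma$ by construction.

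\emph{Main obstacle.} The delicate point is purely organizational: one must write out the $2$-descent datum of the gerbe (the $G_i$-torsors on double overlaps together with their coherence over triple overlaps) carefully enough to see that the two extension steps above are mutually compatible, i.e.\ that every obstruction met while gluing is controlled by a morphism, an isomorphism, or a torsor of finite \'etale group schemes over regular bases, so that the uniqueness and essential surjectivity in \Cref{lemma_fet_equivalence} kill it across the codimension-$\ge 2$ locus. Once this bookkeeping is set up, each individual extension is an immediate application of \Cref{lemma_fet_equivalence}; it may also help to first extend $\underline{\Aut}_{X|_V}(\sigma)$ to a finite \'etale group scheme over $Y$ and record that $X$ is banded by it, which clarifies the picture though not the work.
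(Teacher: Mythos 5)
Your argument is correct, but it takes a genuinely different route from the paper's. You trivialize the gerbe \'etale-locally on the base, extend the section's torsor $Q_i$ and the transition isomorphisms $\alpha_{ij}$ chart by chart via \Cref{lemma_fet_equivalence}, and then reassemble a global section by $2$-descent; the bookkeeping you flag as the main obstacle (coherence of the extended $\overline\alpha_{ij}$ over triple overlaps) is indeed routine, and is cleanest if you transport everything back into the groupoids $X(U_{ijk})$, where the cocycle identity holds because it holds over the dense open $U_{ijk}^{\circ}$ and Isom-sheaves of a separated Deligne--Mumford stack are unramified and separated, so the agreement locus is open, closed and dense. The paper instead argues globally: triviality over $V$ yields a finite \'etale group scheme $G_V$ over $V$ together with a $G_V$-torsor on $X\times_Y V$ realizing $X|_V\simeq BG_V$; since $X$ itself is regular and $X\times_Y V$ has complement of codimension $\geq 2$ in $X$, a single application of \Cref{lemma_fet_equivalence} on $Y$ extends the group to $G$ and one on $X$ extends the torsor, producing a morphism $f\colon X\to BG$ over $Y$ that is an equivalence over $V$; a rigidity statement (a morphism of \'etale gerbes over $Y$ which is an equivalence on a dense open is an equivalence, checked \'etale-locally using \cite[Lem. 3.9]{AOV_tame}) then gives $X\simeq BG$, and the trivial-torsor section extends $\sigma$. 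The trade-off: your approach is more self-contained (no extension of the band, no appeal to the rigidity of gerbe morphisms) but pays with the Čech-level $2$-descent bookkeeping, while the paper's single torsor on the total space already packages all descent data, precisely because the codimension hypothesis passes from $Y$ to $X$ along the \'etale gerbe. One small point to make explicit in your write-up: the schemes $\uIsom_{G_i}(R_{ij},\overline Q_i|_{U_{ij}})$ are finite \'etale over $U_{ij}$ (being forms of $G_i$ or empty), which is what licenses applying \Cref{lemma_fet_equivalence} to extend $\alpha_{ij}$, and the extended $\overline Q_i\to U_i$ is surjective because every connected component of $U_i$ meets $U_i^{\circ}$, so it really is a torsor.
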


\begin{proof} The triviality of $X$ over $V$ corresponds to the existence of a $G_V$-torsor on $X \times_Y V$ for some finite \'etale group scheme $G_V$ over $V$ such that the associated map $X \times_Y V \to BG_V$ is an equivalence. By Lemma \ref{lemma_fet_equivalence}, $G_V$ extends uniquely to a finite \'etale group scheme $G$ over $X$ and this extends to a morphism of stacks $f\colon X \to BG$ which is an equivalence over $V$. However, morphisms between \'etale gerbes over $Y$ are equivalences when they are so on a dense subset of $Y$. Indeed, we can check this \'etale locally so we may assume $X=BG'$ where $G'$ and $G$ are constant. Thus $f$ \'etale locally corresponds to an element $\alpha 
\in \underline{\mathrm{Hom}}(G',G)(Y)$ (see \cite[Lem. 3.9]{AOV_tame}), and since $f$ is an equivalence on a dense open subset of $Y$, $\alpha$ must be an isomorphism. It follows that $f$ is an equivalence. \end{proof}

\begin{Lemma}\label{lemma_extension_gerbes}
    Let $Y$ be a regular Deligne-Mumford stack, $V\subseteq Y$ an open substack with complement of codimension at least 2, and $\cV\to V$ a gerbe banded by a finite constant group $G$ whose order is invertible in $k$. Then there is a gerbe $\cY\to Y$, banded by $G$, along with an equivalence of $G$-gerbes $\phi: \cY|_{V} \to \cV$ over $V$. 
    \end{Lemma}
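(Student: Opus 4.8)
The strategy is to apply Giraud's classification of gerbes with a fixed band together with cohomological purity across the complement $Z:=Y\smallsetminus V$, which has codimension $\geq 2$. Recall that a band on a stack $S$ which is locally isomorphic to $G$ is the same datum as an $\Out(G)$-torsor $P$ on $S$ (here $\Out(G)$ is finite since $G$ is), and that to such a band $L=L_P$ one attaches its center $Z(L):=P\times^{\Out(G)}Z(G)$, an \'etale sheaf of finite abelian groups on $S$ locally isomorphic to $Z(G)$, whose order is invertible in $k$. By Giraud's theory there is an obstruction $\mathrm{ob}(L)\in H^3(S,Z(L))$ (\'etale cohomology) which vanishes precisely when a gerbe banded by $L$ exists on $S$, and in that case the set of such gerbes up to band-preserving equivalence --- equivalently, up to equivalence of $G$-gerbes --- is a torsor under $H^2(S,Z(L))$; all of this is functorial under pullback along $S'\to S$. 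If one instead reads ``banded by $G$'' as banded by the constant band $\underline G$, then the torsor $P$ below is trivial and $\cA=\underline{Z(G)}$, and the first step of the argument is vacuous.

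First I would extend the band. Let $P$ be the $\Out(G)$-torsor underlying the band of $\cV$. By \Cref{lemma_fet_equivalence} the finite \'etale cover $P\to V$ extends uniquely to a finite \'etale cover $\tilde P\to Y$, and since that lemma is an equivalence of categories the $\Out(G)$-torsor structure extends as well, so $\tilde P$ is an $\Out(G)$-torsor on $Y$ with $\tilde P|_V\cong P$; write $\tilde L$ for the associated band on $Y$ and set $\cA:=Z(\tilde L)$, a locally constant constructible sheaf on $Y$ of order invertible in $k$, with $\cA|_V\cong Z(L)$. Next I would apply purity: since $Y$ is regular and $Z$ has codimension $\geq 2$, absolute cohomological purity --- which may be checked on an \'etale atlas of $Y$, thereby reducing to the case of a regular scheme, as local cohomology sheaves are insensitive to \'etale base change --- gives $H^i_Z(Y,\cA)=0$ for $i\leq 3$. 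The localization long exact sequence then shows that $H^2(Y,\cA)\to H^2(V,Z(L))$ is an isomorphism and that $H^3(Y,\cA)\to H^3(V,Z(L))$ is injective.

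The rest is formal. The obstruction $\mathrm{ob}(\tilde L)\in H^3(Y,\cA)$ restricts on $V$ to $\mathrm{ob}(L)$, which vanishes because $\cV$ is a gerbe banded by $L$; by injectivity $\mathrm{ob}(\tilde L)=0$, so there is a gerbe $\cY_0\to Y$ banded by $\tilde L$. The classes of $\cY_0|_V$ and of $\cV$ in the $H^2(V,Z(L))$-torsor of gerbes banded by $L$ differ by some element $\delta$; lifting $\delta$ along the isomorphism $H^2(Y,\cA)\cong H^2(V,Z(L))$ to $\tilde\delta$ and letting it act on $\cY_0$ produces a gerbe $\cY\to Y$ banded by $\tilde L$ --- in particular locally isomorphic to $BG$, hence banded by $G$ --- whose restriction to $V$ has the same class as $\cV$. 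This yields the desired band-preserving equivalence of $G$-gerbes $\phi\colon \cY|_V\xrightarrow{\ \sim\ }\cV$.

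I expect the only genuine obstacle to be the purity input of the second step, and in particular pinning down its formulation for Deligne--Mumford stacks rather than schemes (the reduction to an \'etale atlas, and checking that the relevant sheaf $\cA$ is locally constant constructible of invertible order); the two cohomological consequences $H^2(Y,\cA)\cong H^2(V,Z(L))$ and $H^3(Y,\cA)\hookrightarrow H^3(V,Z(L))$ are exactly what powers the existence and the matching step, and everything else is a routine application of Giraud's machinery together with \Cref{lemma_fet_equivalence}.
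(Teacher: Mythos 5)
Your proposal is correct, but it reaches the conclusion by a different route than the paper, principally in the abelian ingredient. The paper never invokes Gabber-style absolute purity directly: it first treats $G$ a product of $\mu_n$'s via purity of the Brauer group together with the Kummer sequence (using that $\oH^2(Y,\bG_m)\to \oH^2(V,\bG_m)$ is an isomorphism), then bootstraps to arbitrary finite abelian $G$ by a Weil-restriction exact sequence $0\to G\to R_{Y'/Y}(G|_{Y'})\to A\to 0$ and a five-lemma argument whose inputs are \Cref{lemma_fet_equivalence} (isomorphism on $\oH^1$) and \Cref{lemma:trivialgerbeopen} (injectivity on $\oH^2$). You instead get the needed isomorphism $\oH^2(Y,Z(G))\cong \oH^2(V,Z(G))$ (and injectivity on $\oH^3$) in one stroke from semi-purity for the regular stack $Y$ and the codimension-$\geq 2$ complement, via the localization sequence and a reduction to an \'etale atlas through the local cohomology sheaves; this is legitimate (the stratification of a possibly singular closed substack reduces it to Gabber's theorem for regular pairs) and arguably shorter, at the price of invoking absolute purity rather than Brauer purity and of pinning down the stack-to-scheme reduction you mention. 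For the non-abelian reduction both arguments rest on the same theorem of Giraud: the paper phrases it as the simply transitive action of $\oH^2(S,Z(G))$ on $\oH^2(S,G)$ and then builds the extension explicitly as the rigidified product $(\cZ\times BG)\sslash Z(G)$ after extending the $Z(G)$-gerbe, while you use the obstruction/torsor formalism and adjust a band-preserving extension $\cY_0$ by a lifted class. Two remarks: since the lemma fixes the \emph{constant} band, the $\oH^3$ obstruction step is vacuous (the trivial gerbe $BG_Y$ always exists), so your injectivity on $\oH^3$ is not needed for the statement as written, though it is what would let your argument handle non-constant bands, a generality the paper neither states nor needs; and your band-extension step, as you note, is likewise vacuous here, but when it is not, it is exactly the paper's \Cref{lemma_fet_equivalence} applied to the $\Out(G)$-torsor, which is also how the paper's subsequent \Cref{cor_extend_gerbes} proceeds.
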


    \begin{proof} Recall that if $G$ is a sheaf of groups on a Deligne-Mumford stack $S$, $\oH^2(S,G)$ parametrizes the isomorphism classes of $G$-gerbes on $S$. Thus when $G$ is a product of group schemes of the form $\bmu_n$, the lemma can be deduced from purity of Brauer group (see \cite[Thm. 1.1]{zbMATH07080116}). Indeed, the Kummer sequence and the fact that $\oH^2(Y,\mathbb{G}_m) \to \oH^2(V,\mathbb{G}_m)$ is an isomorphism (see, e.g., \cite[Prop. 2]{MR3521088} or \cite{344258}) yields the result. When $G$ is abelian, there is a finite \'etale cover $Y' \to Y$ such that $G|_{Y'}$ is a product of group schemes of the form $\bmu_n$. Now the result can be deduced for arbitrary abelian $G$ by considering the natural exact sequence
    \[0 \to G \to R_{Y'/Y}(G|_{Y'}) \to A \to 0\]
    where $R_{Y'/Y}(G|_{Y'})$ is the Weil restriction and $A$ is the cokernel. Note that $A$ is finite and \'etale. Indeed, compare the long exact sequences associated to $\oH^0(Y,-)$ and $\oH^0(V,-)$ to obtain:

    $$\xymatrix{ \oH^1(Y',G|_{Y'})\ar[d]\ar[r]&\oH^1(Y,A)\ar[d]\ar[r] & \oH^2(Y,G)\ar[d]\ar[r] &\oH^2(Y',G|_{Y'})\ar[d]\ar[r] & \oH^2(Y,A)\ar[d] \\
    \oH^1(V \times_Y Y',G|_{Y'})\ar[r]&\oH^1(V,A)\ar[r] & \oH^2(V,G)\ar[r] &\oH^2(V \times_Y Y',G|_{Y'}))\ar[r] & \oH^2(V,A) }$$

    Now Lemma \ref{lemma_fet_equivalence} implies the first two vertical arrows are isomorphisms, Lemma \ref{lemma:trivialgerbeopen} implies the last two vertical arrows are injective, and the $\bmu_n$ case implies the fourth arrow is an isomorphism. The isomorphism $\oH^2(Y,G) \to \oH^2(V,G)$ now follows from the five lemma. 
    
    For non-abelian $G$ with center $Z(G)$, we describe an action of $\oH^2(S,Z(G))$ on $\oH^2(S,G)$ that is functorial in $S$. Given $\cZ\to S$ and $\cG\to S$ a $Z(G)$ and $G$ gerbe respectively, the action of $\cZ$ sends $\cG$ to $(\cZ\times \cG)\sslash Z(G)$: the rigidification of $\cZ\times \cG$ by the diagonal subgroup $Z(G)$ of the inertia stack $I_{\cZ \times \cG}$. It follows from \cite[Thm. 3.3.3]{giraud} that this action is simply transitive, i.e., $\oH^2(S,G)$ is a $\oH^2(S,Z(G))$-torsor. In particular, there is a $Z(G)$-gerbe $\cZ_V\to V$ such that $\cV = (\cZ_V\times BG)\sslash Z(G).$ Now, since the result holds for abelian group schemes, we can extend the $Z(G)$-gerbe $\cZ_V\to V$ to a $Z(G)$-gerbe $\cZ\to Y$ and conclude by setting $\cY:=(\cZ\times BG)\sslash Z(G).$\end{proof}

Next we show that the extension of this gerbe is unique in the following sense: any two extensions are isomorphic and any two isomorphisms between them are uniquely $2$-isomorphic. 

    \begin{Lemma} \label{uniquenessGerbe} Let $Y$ be a regular Deligne-Mumford stack, $V \subset Y$ an open substack with complement of codimension at least 2 and suppose $\cY$ and $\cY'$ are Deligne-Mumford gerbes over $Y$ then:
    \begin{enumerate}
        \item any equivalence of stacks $f
        \colon \cY|_{V}\to \cY'|_{V}$ over $V$ extends to an equivalence $\cY \to \cY'$ over $Y$ and
        \item given two equivalences $\cY \xrightarrow{a,b}\cY' $ and a $2$-isomorphism $s\colon a|_V \to b|_V$, then there is a unique $2$-isomorphism $\sigma:a \to b$ extending $s$.
    \end{enumerate} \end{Lemma}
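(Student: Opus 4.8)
The plan is to deduce both statements from Lemmas \ref{lemma:trivialgerbeopen} and \ref{lemma_fet_equivalence}, by encoding a $1$-morphism of gerbes as a section of an auxiliary gerbe and a $2$-morphism as a section of a finite \'etale cover.

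For part (1), form $\cG := \cY \times_Y \cY'$, which is a separated Deligne-Mumford gerbe over $\cY$ via the first projection $p$. Since $\cY \to Y$ is \'etale, $\cY$ is a regular Deligne-Mumford stack, and $\cY|_V := \cY \times_Y V$ is an open substack whose complement---the preimage of $Y \smallsetminus V$ under a flat morphism---has codimension $\geq 2$ in $\cY$. Giving a morphism $g\colon \cY \to \cY'$ over $Y$ is the same as giving a section $(\mathrm{id}_{\cY}, g)$ of $p$, and under this dictionary the given equivalence $f$ over $V$ corresponds to a trivialization of the gerbe $\cG|_{\cY|_V} \to \cY|_V$. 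Applying Lemma \ref{lemma:trivialgerbeopen} with base $\cY$ and open $\cY|_V$, this section extends to a section of $p$, i.e.\ to a morphism $g\colon \cY \to \cY'$ over $Y$ restricting to $f$ over $V$. Finally $g$ is a morphism of Deligne-Mumford gerbes over $Y$ which is an equivalence over the dense open $V$, hence an equivalence, by the argument recalled in the proof of Lemma \ref{lemma:trivialgerbeopen}.

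For part (2), the plan is to reduce both the existence and the uniqueness of $\sigma$ to the single assertion that the sheaf $P := \uIsom_Y(a,b)$ on the \'etale site of $Y$---whose sections over $U \to Y$ are the $2$-isomorphisms $a|_U \Rightarrow b|_U$---is representable by a finite \'etale $Y$-scheme. Indeed, granting this, the full faithfulness of $\mathrm{Fet}_Y \to \mathrm{Fet}_V$ from Lemma \ref{lemma_fet_equivalence}, applied to the objects $Y$ and $P$ of $\mathrm{Fet}_Y$, gives a restriction bijection between sections of $P$ over $Y$ and sections over $V$; thus $s$ extends uniquely to a $2$-isomorphism $\sigma\colon a \Rightarrow b$. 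To prove that $P \to Y$ is finite \'etale, note that since $a$ is an equivalence $P$ carries a free action, by precomposition, of $\cA := \underline{\Aut}_Y(a) \cong \underline{\Aut}_Y(\mathrm{id}_{\cY'})$, which makes $P$ a pseudo-torsor under $\cA$; and $\cA$ is a finite \'etale (tame) group scheme over $Y$, namely the central part of the inertia of the gerbe $\cY' \to Y$. A pseudo-torsor under such a group scheme is representable by a finite \'etale $Y$-scheme precisely when it is a genuine torsor, i.e.\ when $a$ and $b$ become $2$-isomorphic \'etale-locally on $Y$.

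It therefore remains to check this local $2$-isomorphism, and this is the only non-formal point. Over a strict henselization $\spec(R^{\mathrm{sh}})$ of a point of $Y$, the gerbes $\cY$ and $\cY'$ become trivial (their bands become constant and higher \'etale cohomology with finite coefficients vanishes), and two self-equivalences of $BG$ over a strictly henselian local scheme are $2$-isomorphic exactly when they induce the same class in $\Out(G)$. Hence the obstruction to a local $2$-isomorphism $a \cong b$ is the image $\omega$ of the self-equivalence $c \circ a$---with $c$ a quasi-inverse of $b$---in the finite \'etale $Y$-group scheme of outer automorphisms of the band of $\cY$; this is a global section which the $2$-isomorphism $s$ identifies with the identity section over $V$. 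Since two sections of a finite \'etale (hence separated and unramified) $Y$-scheme that agree on the dense open $V$ agree on all of $Y$---their equalizer is open, closed, and contains $V$---the section $\omega$ equals the identity section globally, so $a$ and $b$ are $2$-isomorphic \'etale-locally, and the proof is complete. Parts (1) and the reduction in (2) are otherwise formal consequences of Lemmas \ref{lemma:trivialgerbeopen} and \ref{lemma_fet_equivalence}.
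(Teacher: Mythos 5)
Your proposal is correct, but it takes a genuinely different route from the paper's in both parts. For (1), the paper forms $I=\underline{\Isom}_Y(\cY,\cY')$, shows it is a gerbe over a finite \'etale sheaf $H$ of isomorphism classes, extends the induced section of $H$ via \Cref{lemma_fet_equivalence}, and only then applies \Cref{lemma:trivialgerbeopen} to $Y\times_{\sigma,H}I$; you instead base change $\cY'$ along $\cY\to Y$ and apply \Cref{lemma:trivialgerbeopen} directly over the regular stack $\cY$ (after noting that $\cY$ is regular and the codimension condition is preserved), which shortcuts the construction of $H$ at the cost of invoking the ``equivalence over a dense open implies equivalence'' fact recalled in the proof of \Cref{lemma:trivialgerbeopen}. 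For (2), the paper takes the closure in $\uIsom(a,b)$ of the image of the section over $V$ and concludes by Zariski's main theorem (finite, representable and birational onto a normal target), with uniqueness left implicit; you instead show $\uIsom(a,b)$ is a torsor under the finite \'etale centre-of-the-band group scheme, hence finite \'etale over $Y$, and then obtain existence and uniqueness simultaneously from the full faithfulness in \Cref{lemma_fet_equivalence}. Your route requires the extra verification that $a$ and $b$ are \'etale-locally $2$-isomorphic, which you handle via the outer-class obstruction $\omega$, a section of a finite \'etale sheaf trivial on the dense open $V$; this relies on standard band formalism (functoriality and $2$-isomorphism invariance of $\mathrm{Band}(-)$, already used in \Cref{cor_extend_gerbes}) and a routine spreading-out from the strict henselization to an \'etale neighbourhood, both of which you should at least flag. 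The paper's ZMT argument is shorter for existence and needs no local analysis; yours makes the finite-\'etale structure of the $2$-isomorphism sheaf and the uniqueness statement explicit, which is arguably cleaner.
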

%\Mich{I am not sure about the statement. First of all, in part (2) it seems to me we can substitute $a \circ b$ with just a single morphism. Furthermore, I think we would like to prove that the two 2-categories are equivalent, but the statement seems to imply they are isomorphic. It is a bit weird because usually one proves first that the 2-morphisms are the same, and then that the 1-morphisms are the same up to 2-isomorphisms and so on. Here it seems that we are using (1) to prove (2). I am not sure but I think that the problem is that when we use the fact that a torsor extend uniquely it means that there exists a unique torsor up to non-unique isomorphism, and we need to be careful about this isomorphism.}

%\Sid{Which 2-categories are we saying are equivalent? I'm not sure I follow. Otherwise, I have taken your comments into consideration and made changes (now it has more of a "existence and then uniqueness" vibe) See if you like it better. I found it instructive to think about the example of the trivial gerbe B(Z/2).} 

\begin{proof}

For the first statement, consider the stack of equivalences between $\cY$ and $\cY'$
$$I=\underline{\Isom}_Y(\cY,\cY').$$ 
Every $y\in Y(\bar{k})$ admits an \'etale neighbourhood  $W\to Y$ such that  
$$I\times_Y W \cong \underline{\Isom}_W(W\times BG,W\times BG')$$
for finite constant group schemes $G$ and $G'$, and by \cite[Lem. 3.9]{AOV_tame} we have
$$\underline{\Isom}_W(W\times BG,W\times BG')\cong [\underline{\Isom}(G,G')/G']\times W.$$ 
where $G'$ acts by post-composing with conjugation. This shows that $I$ is a separated Deligne-Mumford stack. Moreover, the functor of isomorphism classes 
of $I$ is representable by a scheme $H$ finite \'etale over $Y$, because \'{e}tale locally over $Y$ the functor of isomorphism classes of $I$ is just the schematic quotient $(\underline{\Isom}(G,G')/G') \times W$, which is \'{e}tale over $W$. Therefore, $I\to H$ is a gerbe.

Now, since $f$ defines a section of $I\to Y$ over $V$, $H\to Y$ also has a section over $V$, and by \Cref{lemma_fet_equivalence} this yields a section $\sigma$ of $H\to Y$: indeed, we can regard $V\to H|_V$ as a morphism of finite \'{e}tale covers, hence the categorical equivalence yields an extension $Y\to H$, which must still be a section. Now, as the projection $Y\times_{\sigma, H} I \to Y$ has a section $f$ over $V$, by Lemma \ref{lemma:trivialgerbeopen}, the gerbe $Y\times_{\sigma, H} I \to Y$ admits a section extending $f$.  %\Sid{Aren't we done at this point? Let me know. It seems that the rest in this paragraph in not necessary but perhaps I'm missing something.} Namely, there is a finite group $\Gamma$ such that $Y\times_{\sigma, H}\underline{\Isom}_Y(\cY,\cY)\cong B\Gamma \times Y$. But then a section $Y\to B\Gamma\times Y$ of the projection is a $\Gamma$-torsor over $Y$. We have this $\Gamma$-torsor over $V$, so again by \Cref{lemma_fet_equivalence} the $\Gamma$-torsor extends uniquely.

The second part follows from Zariski's main theorem. Consider the following cartesian diagram
$$\xymatrix{\underline{\mathrm{Isom}}(a,b) \ar[d]_\pi \ar[rr] & & I \ar[d]^\Delta \\\ I\ar[rr]^-{(a,b)} && I\times_Y I}$$
Observe that $\pi$ has a section over $I \times_Y V \subset I$, so if we denote by $W$ the closure of its image in $\underline{\mathrm{Isom}}(a,b)$, then the corresponding map $W\to I$ yields a section. Indeed, it is a finite, representable and birational morphism with a normal target, therefore must be an isomorphism. 
\end{proof}

 \begin{Lemma}\label{cor_extend_gerbes}
     Let $X$ be a regular Deligne-Mumford stack, $U\hookrightarrow X$ an open substack with complement of codimension at least 2, and $\rho:\cU\to U$ a gerbe. Assume that $\cU$ is separated over $U$. Then there is a smooth and separated Deligne-Mumford stack $\cX\to X$ which is a gerbe, and which fits in a cartesian diagram as follows:
     $$\xymatrix{\cU\ar[r] \ar[d] & \cX\ar[d] \\ U\ar[r] & X.}$$
 \end{Lemma}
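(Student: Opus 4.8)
The plan is to reduce to the case of a gerbe banded by a finite \emph{constant} group, which is exactly what \Cref{lemma_extension_gerbes} handles, and then to recover the general case by descent along a finite \'etale cover of $X$ obtained from \Cref{lemma_fet_equivalence}. We may and do assume $X$ is connected, so that $U$ is connected as well.

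\textbf{Reducing to a constant band.} Since $\cU\to U$ is a (tame) Deligne--Mumford gerbe over the connected stack $U$, its band is a twisted form of the constant band $\underline G$ of a finite group $G$ of order invertible in $k$; such forms are classified by $\oH^1_{\mathrm{et}}(U,\underline{\Out}(G))$, where $\underline{\Out}(G)$ is a finite constant group scheme. I would let $U'\to U$ be the $\Gamma$-torsor, $\Gamma:=\Out(G)$, corresponding to this class, so that over $U'$ the band of $\cU':=\cU\times_U U'$ becomes $\underline G$, i.e.\ $\cU'\to U'$ is a gerbe banded by the finite constant group $G$. By \Cref{lemma_fet_equivalence} the finite \'etale cover $U'\to U$ extends uniquely to a finite \'etale cover $X'\to X$ restricting to $U'$ over $U$; then $X'$ is again a regular Deligne--Mumford stack and $U'\subseteq X'$ is open with complement of codimension $\geq 2$. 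The $\Gamma$-action on $U'$ over $U$ extends uniquely to a $\Gamma$-action on $X'$ over $X$ (uniqueness of extensions forces the axioms of an action), and comparing $\Gamma\times X'$ with $X'\times_X X'$ — both finite \'etale over $X'$ and canonically identified over the dense open $U'$ — \Cref{lemma_fet_equivalence} shows the action is free; hence $X'\to X$ is a $\Gamma$-torsor and $X\cong[X'/\Gamma]$.

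\textbf{Extending over $X'$ and descending.} Applying \Cref{lemma_extension_gerbes} to $X'$, the open $U'$, and the $G$-gerbe $\cU'\to U'$ produces a $G$-gerbe $\cX'\to X'$ together with an equivalence $\phi'\colon\cX'|_{U'}\xrightarrow{\sim}\cU'$ over $U'$; being a $G$-gerbe over the regular stack $X'$ with $G$ finite \'etale of invertible order, $\cX'$ is a smooth Deligne--Mumford stack, separated over $X'$. To finish I would descend $\cX'\to X'$ along the $\Gamma$-torsor $X'\to X$. Since $\cU'$ is a pullback of $\cU$, it carries canonical descent data: equivalences $\theta_\gamma\colon\gamma^*\cU'\xrightarrow{\sim}\cU'$ over $U'$ together with coherence $2$-isomorphisms, whose descent is $\cU$. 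Transporting this structure across $\phi'$ gives descent data on $\cX'|_{U'}$; I would extend each $\theta_\gamma$ to an equivalence $\gamma^*\cX'\to\cX'$ over $X'$ using \Cref{uniquenessGerbe}(1), and extend each coherence $2$-isomorphism uniquely using \Cref{uniquenessGerbe}(2). The cocycle and associativity identities for the extended data hold because they hold over the dense open $U'$ and, by the uniqueness in \Cref{uniquenessGerbe}(2), a $2$-isomorphism between equivalences of gerbes over $X'$ is determined by its restriction to $U'$. Thus $\cX'\to X'$ descends to a gerbe $\cX\to X$, and $\phi'$, which is $\Gamma$-equivariant by construction, descends to an equivalence $\cX\times_X U\cong\cU$ over $U$; in particular the square is cartesian. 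Finally $\cX$ is a smooth Deligne--Mumford stack and $\cX\to X$ is separated (and a gerbe), since each of these properties is fppf-local on $X$ and holds after the faithfully flat base change $\cX\times_X X'\cong\cX'$.

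\textbf{Expected main obstacle.} The genuinely delicate point is the $2$-categorical descent in the previous paragraph: producing the full pseudo-equivariance structure on $\cX'$ — not merely the isomorphisms $\theta_\gamma$ but all higher coherences — by extending the corresponding structure on $\cX'|_{U'}$ and verifying that the coherence diagrams commute. The two parts of \Cref{uniquenessGerbe} are tailored for exactly this, the first extending the $1$-morphisms and the second both extending the $2$-morphisms and forcing the coherences by density. A secondary point to be careful about is that the band-trivializing cover $U'\to U$ can be taken finite \'etale, so that \Cref{lemma_fet_equivalence} applies; this is why one uses the $\Out(G)$-torsor rather than an arbitrary trivializing \'etale cover of $U$.
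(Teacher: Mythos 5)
Your proof is correct and follows essentially the same route as the paper: your band-trivializing torsor $U'\to U$ is exactly the paper's $\Out(G)$-torsor $B_U=\underline{\Isom}_{\mathrm{Band}}(\mathrm{Band}(\cU),G)$, and the three steps (extend the torsor via \Cref{lemma_fet_equivalence}, extend the banded $G$-gerbe via \Cref{lemma_extension_gerbes}, extend the $\Out(G)$-equivariance data via \Cref{uniquenessGerbe}) coincide with the paper's. The only cosmetic difference is that you phrase the final step as $2$-categorical descent along the torsor $X'\to X$, whereas the paper forms the quotient $[\cB_X/\Out(G)]$ of the extended strict action and checks compatibility with \'etale base change; these are the same construction.
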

\begin{proof}
We may assume $X$ is irreducible. Since $\cU \to U$ is a separated Deligne-Mumford gerbe, for any point $p \in U$ the pullback of $\cU$ along the strictly local ring $R_p$ at $p$ in $U$ yields an isomorphism $\cU_{R_p} \cong BG_p$ for a finite constant group scheme $G_p$ over $R_p$. If $\eta \in U$ denotes the generic point then every $p \in U$ has a strictly local ring with a natural map $R_p \to R_{\eta}$, thus we see that the abstract group associated to $G_p$ doesn't depend on $p \in U$ and so we denote it by $G$. In particular, the band of $\cU$, which we will denote $\mathrm{Band}(\cU)$, is locally isomorphic to (the band associated to) $G$. Thus, the sheaf in the stack of bands $B_U=\underline{\mathrm{Isom}}_{\mathrm{Band}}(\mathrm{Band}(\cU),G)$ is a (left-)torsor under $\underline{\mathrm{Isom}}_{\mathrm{Band}}(G,G)=\operatorname{Out}(G)$.
Moreover, the $\operatorname{Out}(G)$-torsor $B_U\to U$ has the following property: the cartesian product $\cB_U:=B_U\times_U \cU$ is a gerbe over $B_U$ banded by $G$. 
Observe now that:
\begin{enumerate}
    \item the $\operatorname{Out}(G)$-torsor $B_U\to U$ extends to an $\operatorname{Out}(G)$-torsor $B_X\to X$ by \Cref{lemma_fet_equivalence}, 
    \item the banded $G$-gerbe $\cB_U\to B_U$ extends to a banded $G$-gerbe $\cB_X\to B_X$ by \Cref{lemma_extension_gerbes}, and
    \item the (strict) action, as in \cite{Rom05}*{Def. 1.3 (i)} of $\Out(G)$ on $\cB_U$ extends to $\cB_X$ by Lemma \ref{uniquenessGerbe}.
\end{enumerate}
Now we may set $\cX=[\cB_X/\Out(G)]$. It is a gerbe over $X$ since we may check this locally on $X$. Indeed, $\cX|_{B_X}\cong \cB_X$ and the latter is a gerbe over $B_X$. It also extends the given gerbe $\cU$ since, by \cite[Prop. 2.6]{Rom05}, the formation of the quotients commutes with \'etale base change.
\end{proof}

 \begin{proof}[Proof of \Cref{thm_DM_stack_is_determined_by_cms_and_codim1}] 
     By \cite[Ex. A.3]{AOV_tame}, we can factor the map $\cU \to U$ as $$\cU\xrightarrow{a} \cU_1 \xrightarrow{b} U$$
     where the map $a$ is a gerbe, and $\cU_1$ is a smooth tame Deligne-Mumford stack with generically trivial stabilizers.
         %\item the map $b$ is the canonical covering stack (see \cites{vistoli1989intersection, GS17}),
         %\item the map $c$ is a composition of root stacks, and
         %\item the map $d$ is a canonical covering stack.

     Using the unique extension of the ramification data of $\cU_1 \to U$ to $X^{\mathrm{can}}$ and the process outlined by \cite[Thm. 1.1]{GS17} there is a unique smooth tame Deligne-Mumford stack with generically trivial stabilizers $\cX_1$ equipped with a coarse space map
     $$\cX_1\xrightarrow{b'} X$$
     such that $b'$ extends $b$. The result now follows because we may extend the gerbe $\cU \to \cU_1$ uniquely to a gerbe $\cX \to \cX_1$ by \Cref{cor_extend_gerbes}.  \end{proof}
     
     %and the extensions are unique by the universal property of the canonical covering stack (see \cite[Theorem 4.6]{MR2774310})
     %First observe that these constructions over $U$ extend to $X$. Indeed, steps (2) and (4) extend by the construction of the canonical covering stack (see \cite[Proposition 2.8]{vistoli1989intersection}). Point (3) extends as the canonical covering stack is smooth, so any closed substack of pure codimension 1 is Cartier. Uniqueness follows from  Point (1) extends uniquely from \Cref{cor_extend_gerbes}.

 \section{Contraction on coarse moduli spaces}\label{section_contraction_on_cms}
\textcolor{black}{This section is divided into two subsections. First, we will present some result on deformation theory which will be useful for the rest of the manuscript. In the second half we will prove \Cref{teo_contraction_cms}.}
\subsection{Deformation theory and algebraization results}
\begin{Prop}\label{prop_extension_of_Y}
   \textcolor{black}{Let $f:\cE\to \cY$ be a proper morphism of Deligne-Mumford stacks, and let $\cE\hookrightarrow \cE'$ be a square-zero thickening of $\cE$ with ideal sheaf $I$. Assume that $R^1f_*I=0$ and that the natural map $\cO_{\cY}\to f_*\cO_\cE$ is an isomorphism. Then there is a square zero extension $\iota:\cY\hookrightarrow \cY'$ as below
   $$\xymatrix{\cE\ar[r] \ar[d]_f & \cE'\ar@{..>}[d]^{f'} \\ \cY\ar@{..>}[r] & \cY'}$$
   such that \begin{enumerate}
   \item the square above is a pushout in algebraic stacks,
        \item the ideal sheaf of $\cY$ in $\cY'$ is isomorphic to $\iota_*f_*I$, and
        \item $f'_*\cO_{\cE'}=\cO_{\cY'}$.
    \end{enumerate}
    Moreover, if $\cY$ is separated also $\cE'\to \cY'$ is proper.}
\end{Prop}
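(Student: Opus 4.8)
This is a relative contraction statement, to be proved in the spirit of Ferrand's pinchings but with the usual affineness hypothesis on $f$ replaced by the cohomological conditions $\cO_\cY\xrightarrow{\sim}f_*\cO_\cE$ and $R^1f_*I=0$. The plan is to build $\cY'$ by hand from its structure sheaf and then to deduce all three conclusions --- in particular that the square is a pushout --- from deformation theory. Pushing the square-zero sequence $0\to I\to\cO_{\cE'}\to\cO_\cE\to 0$ forward along $f$ and using $R^1f_*I=0$ together with $\cO_\cY\xrightarrow{\sim}f_*\cO_\cE$ gives a short exact sequence $0\to f_*I\to f_*\cO_{\cE'}\to\cO_\cY\to 0$ of sheaves of rings on the small \'etale site of $\cY$; it is square-zero because $(f_*I)^2\subseteq f_*(I^2)=0$, and $f_*I$ is quasi-coherent as $f$ is quasi-compact and separated. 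Thus $f_*\cO_{\cE'}$ is a square-zero extension of the algebraic stack $\cY$ by the quasi-coherent module $f_*I$, and any such extension is again an algebraic stack: I take $\cY'$ to be this one and $\iota\colon\cY\hookrightarrow\cY'$ the square-zero closed immersion it carries, whose ideal is $\iota_*f_*I$ --- this is (2). The thickenings $\cE\hookrightarrow\cE'$ and $\cY\hookrightarrow\cY'$ induce equivalences of small \'etale topoi, so the morphism of topoi underlying $f$ together with the counit $f^{-1}f_*\cO_{\cE'}\to\cO_{\cE'}$ define a morphism of locally ringed topoi $f'\colon\cE'\to\cY'$; checking on \'etale atlases it is a morphism of algebraic stacks, it restricts to $f$ over $\cE$ (hence fills in the square), and $f'_*\cO_{\cE'}=f_*\cO_{\cE'}=\cO_{\cY'}$ by construction, which is (3). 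Note there is in general no retraction $\cY'\to\cY$, consistent with $f$ itself typically not extending to a morphism $\cE'\to\cY$.

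For properness, assume $\cY$ is separated. Then $\cE$ is separated, being proper over the separated stack $\cY$, hence so is its square-zero thickening $\cE'$, whence $f'$ is separated. It is of finite type because $\cE'\to\cE$ and $\cY'\to\cY$ are finite and $f$ is of finite type, and it is quasi-compact and universally closed since these can be tested after passing to reductions, where $f'$ is the reduction of the proper morphism $f$. Hence $f'$ is proper.

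For the pushout property (1) I must show that for every algebraic stack $\cW$ the functor $\Hom(\cY',\cW)\to\Hom(\cE',\cW)\times_{\Hom(\cE,\cW)}\Hom(\cY,\cW)$ is an equivalence of groupoids. Fix a compatible pair $(g\colon\cE'\to\cW,\ h\colon\cY\to\cW)$ with a chosen $2$-isomorphism $g|_\cE\simeq h\circ f$. By deformation theory (Illusie, Olsson) the extensions of $h$ along $\iota$ form, if non-empty, a torsor under $\Hom_{\cO_\cY}(Lh^*\bL_{\cW/k},f_*I)$ with obstruction in $\mathrm{Ext}^1_{\cO_\cY}(Lh^*\bL_{\cW/k},f_*I)$, while the extensions of $h\circ f\simeq g|_\cE$ along $\cE\hookrightarrow\cE'$ form a torsor under $\Hom_{\cO_\cE}(L(h\circ f)^*\bL_{\cW/k},I)\cong\Hom_{\cO_\cY}(Lh^*\bL_{\cW/k},Rf_*I)$ with obstruction in $\mathrm{Ext}^1_{\cO_\cY}(Lh^*\bL_{\cW/k},Rf_*I)$, the isomorphisms being $(Lf^*,Rf_*)$-adjunction. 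Since $\bL_{\cW/k}\in D^{\le 1}$ we get $Lh^*\bL_{\cW/k}\in D^{\le 1}$, and as $R^1f_*I=0$ --- i.e. $\tau_{\le 1}Rf_*I=f_*I$ --- the natural maps $\Hom_{\cO_\cY}(Lh^*\bL_{\cW/k},f_*I)\to\Hom_{\cO_\cY}(Lh^*\bL_{\cW/k},Rf_*I)$ and $\mathrm{Ext}^1_{\cO_\cY}(Lh^*\bL_{\cW/k},f_*I)\to\mathrm{Ext}^1_{\cO_\cY}(Lh^*\bL_{\cW/k},Rf_*I)$ are respectively an isomorphism and a monomorphism. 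Because $(f,f')$ is a morphism of square-zero extensions, naturality of the obstruction identifies the obstruction for $h\circ f$ with the image of that for $h$ under the injective second map; since $g$ is itself an extension of $h\circ f$, both obstructions vanish, so $h$ extends. The isomorphism of torsor groups then shows that each extension of $h\circ f$ --- and in particular $g$ --- equals $f'$ composed with a \emph{unique} extension of $h$, which is precisely the essential surjectivity and full faithfulness of the comparison functor (the $2$-morphisms are treated the same way, via $\Hom_{\cO_\cY}(Lh^*\bL_{\cW/k}[1],-)$). I expect the \emph{main obstacle} to be exactly this bookkeeping: checking that the morphism of locally ringed topoi $f'$ is genuinely a morphism of algebraic stacks, and that the obstruction classes and torsor actions on the $\cE$-side and the $\cY$-side correspond under $Rf_*$ in the way claimed (this is where $R^1f_*I=0$ and $\bL_{\cW/k}\in D^{\le 1}$ enter); once these functoriality assertions are in place, (1) is formal.
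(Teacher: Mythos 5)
Your argument is essentially correct, but it reaches \Cref{prop_extension_of_Y} by a genuinely different route than the paper. The paper never assembles $\cY'$ from its structure sheaf: it encodes the thickening $\cE\hookrightarrow\cE'$ as a class $\alpha\in\E^1(\bL_{\cE},I)$, maps it to $\E^1(\bL_{\cY},Rf_*I)$, and invokes Illusie III.2.2.1 (via Olsson) to produce $\cY'$ \emph{and} the map $f'$ from a lift of this class to $\E^1(\bL_{\cY},f_*I)$; existence and uniqueness of the lift follow from $\E^1(\bL_{\cY},\tau_{\ge 1}Rf_*I)=\E^0(\cH^0(\bL_\cY),R^1f_*I)=0$ -- the same truncation computation you use, but deployed to \emph{construct} $\cY'$ rather than to verify its universal property. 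Item (3) is then extracted by pushing forward the structure sequence and the snake lemma, and the pushout is obtained by running the whole construction relative to an arbitrary test stack $\cS$ (the ``triples'' paragraph). Your route -- defining $\cY'$ as the ringed-topos thickening with $\cO_{\cY'}=f_*\cO_{\cE'}$, so that (2), (3) and properness are immediate, and then checking the universal property against a fixed target $\cW$ by comparing deformations and obstructions of $h$ along $\cY\subset\cY'$ with those of $h\circ f$ along $\cE\subset\cE'$ through $Rf_*$-adjunction -- is the alternative the paper itself sketches in the unlabelled remark after the proof (there only for schemes), and your explicit equivalence-of-groupoids check via the iso/iso/injection on $\E^{-1},\E^0,\E^1$ is more detailed than the paper's terse treatment of the pushout. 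What your approach must pay for is exactly what you flag: (a) that a square-zero extension of $\cO_\cY$ on the small \'etale site is again a Deligne--Mumford stack and that the ringed-topos map $f'$ is a stack morphism -- true here via an \'etale atlas, topological invariance of the \'etale site, and flat base change for the pushed-forward extension (using $R^1f_*I=0$ again), but this is precisely what the cotangent-complex construction gives for free, and it would not work as stated for Artin stacks via the lisse-\'etale site; (b) deformation theory of morphisms into an arbitrary algebraic stack $\cW$ with $\bL_{\cW}\in D^{\le 1}$, for which the paper also only gestures at a footnote to Olsson; and (c) a small slip: there are no morphisms $\cE'\to\cE$ or $\cY'\to\cY$ -- you mean that the thickenings are finite universal homeomorphisms, after which properness along thickenings is indeed standard.
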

\textcolor{black}{Observe that we are \emph{not} claiming that the diagram of \Cref{prop_extension_of_Y} is cartesian,
and in fact it is not true in general.}
\begin{proof}%[Second proof of \Cref{prop_extension_of_Y}]
\textcolor{black}{ We formulate the desired problem in terms of extensions of cotangent complexes. All the maps will be over an algebraic stack $\cS$, so we will write $\bL_\cY$ for $\bL_{\cY/\cS}$, and similarly for $\cE$ and $\cE'$. From \cite{ols06}*{Thm. 1.1}\footnote{While the results in \cite{ols06} are stated for representable morphisms of algebraic stacks, one can check that they go through for morphisms of algebraic stacks which are representable by Deligne-Mumford stacks.},
%\cite[Thm. III.1.2.3]{illusie2006complexe},
the extension $\cE\to \cE'$ corresponds to an element $\alpha\in\E^1(\bL_{\cE}, I).$ There is an exact triangle 
     $f^*\bL_{\cY}\to \bL_{\cE}\to \bL_{\cE/\cY}\xrightarrow{+1}$
     which induces
     $$\E^1(\bL_{\cE},I)\to \E^1(f^*\bL_{\cY},I)\cong \E^1(\bL_{\cY},Rf_*I). $$}
     \textcolor{black}{We denote the composition by $g$.
        Moreover, there is an exact triangle
    $$f_*I\to Rf_*I\to \tau_{\ge 1} Rf_*I\xrightarrow{+1}$$}
    \textcolor{black}{inducing the maps
    $$\E^1(\bL_{\cY},  f_*I)\xrightarrow{h} \E^1(\bL_{\cY},Rf_*I)\to \E^1(\bL_{\cY},\tau_{\ge 1} Rf_*I)\xrightarrow{+1}.$$
     It follows from \cite[III.2.2.1]{illusie2006complexe}, using simplicial schemes as in \cite{ols06}, that one can find $\cY'$ as above if and only if $g(\alpha)$ belongs to the image of $h$. This, in particular, holds if $\E^1(\bL_{\cY},\tau_{\ge 1} Rf_*I)=0$, and in our case
     $$\E^1(\bL_{\cY},\tau_{\ge 1} Rf_*I) = \E^0(\cH^0(\bL_{\cY}),R^1f_*I) = 0$$
     where the first equality follows since the cotangent complex of a Deligne-Mumford stack is in non-positive degrees, and $\tau_{\ge 1} Rf_*I$ is in degrees greater than 0. The second equality instead follows from $R^1f_*I=0$.}

    \textcolor{black}{Observe that such a stack $\cY'$ is unique (up to unique isomorphism): in fact, any two solutions to the deformation problem above would differ by an element of $\E^0(\bL_{\cY},\tau_{\ge 1} Rf_*I)$, and this group is zero again because the cotangent complex is concentrated in degrees $\leq 0$ and the other complex is in degrees $\geq 1$.}
    
     \textcolor{black}{Recall also that the cotangent complex formalism fills the diagram above, together with any 2-isomorphism, in the following sense. Given a \textit{triple} $(f_1, f_2, \sigma)$ consisting of $f_1:\cY\to \cS$, $f_2:\cE'\to \cS$ and an isomorphism $\sigma:(f_1)|_\cE\to (f_2)|_\cE$, the cotangent complex formalism produces:
     \begin{enumerate}
         \item an extension $\cY'$ as above,
         \item a map $\cY'\to \cS$,
         \item an isomorphism $\Sigma:(\cE\to \cE'\to \cY')\to (\cE\to \cY\to \cY')$,
         \item an isomorphism between $\cY\to \cS$ and $\cY\to \cY'\to\cS$,
         \item an isomorphism between $\cE'\to \cS$ and $\cE'\to \cY'\to \cS$.
     \end{enumerate} By pulling-back the last two isomorphisms to $\cE$ and by composing them with $\Sigma$, they give $\sigma$. }
     
     \textcolor{black}{To prove that $f'_*\cO_{\cE'}=\cO_{\cY'}$, it suffices to push-forward the exact sequence $0\to j_*I\to \cO_{\cE'}\to j_*\cO_{\cE}\to 0$. From the vanishing of $R^1f_*I$, one has the following diagram:$$\xymatrix{0\ar[r] & \iota_*f_*I\ar[r] \ar[d]_\cong &\cO_{\cY'}\ar[r] \ar[d] & \iota_*\cO_{\cY}\ar[r] \ar[d]_\cong & 0 \\ 0\ar[r] & f'_*j_*I\ar[r]  &f'_* \cO_{\cE'}\ar[r] & f'_*j_*\cO_{\cE}\ar[r]  & 0}$$
     and the snake lemma gives the desired isomorphism.}
\end{proof}
\begin{Remark}\label{remark_contangent_complex_argument}
    \textcolor{black}{To construct $Y'$ we only used that $\E^0(\cH^0(\bL_{Y}),R^1f_*I) = \operatorname{Hom}(\Omega^1_{Y},R^1f_*I)=0$, rather than the stronger $R^1f_*I=0$. Moreover, we only used that $f_*\cO_\cE = \cO_\cY$ to guarantee that also $f'_*\cO_{\cE'} = \cO_{\cY'}$.}
\end{Remark}
\begin{Remark}
    \textcolor{black}{From the last paragraph of the proof of \Cref{prop_extension_of_Y} one can obtain a different proof of \Cref{prop_extension_of_Y} in the category of schemes. Indeed, one can simply construct $\cY'$, locally over an affine neighbourhood $\spec(A)\subseteq \cY$, as $\spec(\oH^0(\cE'|_{\spec(A)}))$.}
\end{Remark}
\begin{Cor}\label{cor_induction_for_thickenings}\textcolor{black}{Consider a diagram of Deligne-Mumford stacks as follows$$\xymatrix{\cE\ar[d]_f\ar[r]^\iota & \cX\\\cY & }$$let $I$ be the ideal sheaf of $\cE$ in $\cX$ and let $n\cE$ be the closed substack of $\cX$ with ideal sheaf $I^n$. Assume that $f$ is proper and that $\cY$ is separated.
Assume also that, if we denote by $I^{(n)}$ the ideal sheaf of $n\cE$ in $(n+1)\cE$, \begin{enumerate}
    \item the canonical map $\cO_Y\to f_*\cO_\cE$ is an isomorphism and \item$R^1f_*\iota^*I^{(n)}=0$ for every $n\ge 1$.
\end{enumerate} Then there is a commutative diagram as follows$$\xymatrix{\cE\ar[r] \ar[d]_f & n\cE\ar@{..>}[d]^{f^{(n)}}\ar[r] & \cX \\ \cY\ar@{..>}[r] & n\cY &}$$
such that the canonical map $\cO_{n\cY}\to f^{(n)}_*\cO_{n\cE}$ is an isomorphism. Moreover, the square above is a push-out in algebraic stacks.
}
\end{Cor}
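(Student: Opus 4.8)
The plan is to prove \Cref{cor_induction_for_thickenings} by induction on $n$, stripping off one square-zero layer at a time and feeding it to \Cref{prop_extension_of_Y}. For $n=1$ I would take $1\cY=\cY$ and $f^{(1)}=f$; hypothesis (1) is precisely the isomorphism $\cO_\cY\xrightarrow{\sim}f_*\cO_\cE$, and the degenerate square is trivially a pushout. All the work is in the inductive step: given a proper morphism $f^{(n)}\colon n\cE\to n\cY$ with $\cO_{n\cY}\xrightarrow{\sim}f^{(n)}_*\cO_{n\cE}$ and with the square $(\cE,n\cE,\cY,n\cY)$ a pushout of algebraic stacks, I want to build $(n+1)\cY$ and $f^{(n+1)}$. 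As part of the inductive data I would also carry along that the $n\cY$ form a tower of closed immersions $\cY=1\cY\hookrightarrow 2\cY\hookrightarrow\cdots$ and that each $f^{(n)}$ restricts to $f$ on $\cE$.

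Concretely: the closed immersion $n\cE\hookrightarrow(n+1)\cE$ inside $\cX$ has ideal $I^{(n)}=I^n/I^{n+1}$, which is square-zero because $2n\ge n+1$ forces $I^{2n}\subseteq I^{n+1}$, and which is annihilated by $I$, hence is an $\cO_\cE$-module. I would then apply \Cref{prop_extension_of_Y} with $(\cE\hookrightarrow\cE')\rightsquigarrow(n\cE\hookrightarrow(n+1)\cE)$, $\cY\rightsquigarrow n\cY$, $f\rightsquigarrow f^{(n)}$, $I\rightsquigarrow I^{(n)}$. Of its three hypotheses, $\cO_{n\cY}\xrightarrow{\sim}f^{(n)}_*\cO_{n\cE}$ is the inductive hypothesis; properness of $f^{(n)}$ holds by the hypothesis on $f$ when $n=1$ and, for $n>1$, by the final ``moreover'' clause of \Cref{prop_extension_of_Y} applied one step earlier (here I use that $(n-1)\cY$, being a thickening of the separated stack $\cY$, is itself separated). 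For the vanishing $R^1f^{(n)}_*I^{(n)}=0$: since $I^{(n)}$ is an $\cO_\cE$-module one has $\iota^*I^{(n)}=I^{(n)}$, and since the closed immersion $\jmath_n\colon\cY\hookrightarrow n\cY$ is affine and $f^{(n)}$ restricted to $\cE$ equals $\jmath_n\circ f$, one gets $R^1f^{(n)}_*I^{(n)}\cong\jmath_{n,*}R^1f_*\iota^*I^{(n)}$, which vanishes by hypothesis (2). Then \Cref{prop_extension_of_Y} outputs the square-zero extension $n\cY\hookrightarrow(n+1)\cY$, a morphism $f^{(n+1)}\colon(n+1)\cE\to(n+1)\cY$ extending $f^{(n)}$ with $\cO_{(n+1)\cY}\xrightarrow{\sim}f^{(n+1)}_*\cO_{(n+1)\cE}$ and $f^{(n+1)}$ proper, and the fact that the layer square $(n\cE,(n+1)\cE,n\cY,(n+1)\cY)$ is a pushout.

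Finally I would upgrade the layer pushout to the full pushout square $(\cE,(n+1)\cE,\cY,(n+1)\cY)$ by the pasting law for pushouts: paste the inductive square $(\cE,n\cE,\cY,n\cY)$ with the layer square $(n\cE,(n+1)\cE,n\cY,(n+1)\cY)$ along $n\cE\to n\cY$. The map $(n+1)\cE\to\cX$ in the target diagram is just the structural closed immersion. I do not expect a genuine obstacle: the argument is bookkeeping around a single black box, \Cref{prop_extension_of_Y}, and the only points needing care are (i) propagating separatedness and properness up the tower $\cY\hookrightarrow 2\cY\hookrightarrow\cdots$ so that the ``moreover'' clause is available at every stage, and (ii) the harmless identification $R^1f^{(n)}_*I^{(n)}\cong\jmath_{n,*}R^1f_*\iota^*I^{(n)}$ that converts hypothesis (2) into exactly the input \Cref{prop_extension_of_Y} requires.
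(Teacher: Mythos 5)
Your proposal is correct and is essentially the paper's argument: the paper proves \Cref{cor_induction_for_thickenings} exactly by iterating \Cref{prop_extension_of_Y} layer by layer, and your write-up just makes explicit the bookkeeping (propagating separatedness/properness, the identification $R^1f^{(n)}_*I^{(n)}\cong\jmath_{n,*}R^1f_*\iota^*I^{(n)}$ via exactness of pushforward along the closed immersion, and pasting the layer pushouts) that the paper leaves implicit.
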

\begin{proof}
    \textcolor{black}{The proof follows from repeatedly applying \Cref{prop_extension_of_Y}}.
\end{proof}

\begin{Prop}\label{prop_pushout_criteria}
    \textcolor{black}{Consider a commutative diagram of Deligne-Mumford stacks, where $\iota$ and $j$ are closed embeddings, $f$ is proper, $g$ is an isomorphism over $\cZ\smallsetminus \cY$, and $\cE\to \cY\times_\cZ\cX$ is a homeomorphism
     $$\xymatrix{\cE\ar[d]_f \ar[r]^\iota & \cX\ar[d]^g &\\ \cY\ar[r]^j&\cZ &}$$ 
     Let $I$ be the ideal sheaf of $\cE$ in $\cX$ and let $n\cE$ be the closed substack of $\cX$ with ideal sheaf $I^n$. 
Assume also that, if we denote by $I^{(n)}$ the ideal sheaf of $n\cE$ in $(n+1)\cE$, then \begin{enumerate}
    \item the canonical map $\cO_\cY\to f_*\cO_\cE$ is an isomorphism, \item$R^1f_*\iota^*I^{(n)}=0$ for every $n\ge 1$, and
    \item for every $p\in \cY$ there is an \'etale neighbourhood $\spec(A)$ of $p$ in $\cZ$ such that, if $J\subseteq A$ is the ideal of $\cY$ in $\cZ$, then $\lim_n(A/J^n)\cong \lim_n\oH^0(\cO_{n\cE\times_{\cZ}\spec(
A)}).$ 
\end{enumerate}
 Then the square above is a push-out square in the category of algebraic stacks. 
}
\end{Prop}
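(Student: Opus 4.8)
The plan is to verify that $\cZ$, together with $g$ and $j$, is the pushout $\cX\sqcup_{\cE}\cY$ in algebraic stacks, by combining the finite‑order pushout squares furnished by \Cref{cor_induction_for_thickenings} with a formal gluing argument along $\cY$. First I would apply \Cref{cor_induction_for_thickenings} — whose hypotheses are exactly (1) and (2) — to obtain, for every $n$, a pushout square
$$\xymatrix{\cE\ar[r] \ar[d]_f & n\cE\ar[d]^{f^{(n)}}\ar[r] & \cX\ar[d]^g \\ \cY\ar[r] & n\cY \ar@{..>}[r]^{\phi_n}& \cZ}$$
with $\cO_{n\cY}\cong f^{(n)}_*\cO_{n\cE}$, where $\phi_n$ is induced by the pushout property of $n\cY$ out of $n\cE\hookrightarrow\cX\xrightarrow{g}\cZ$ and $j$, and the $\phi_n$ are compatible. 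Since $f$ is proper and $\cY$ separated, $n\cE\to\cZ$ is proper, so on any affine étale chart $u\colon\spec A\to\cZ$ the $A$-module $A_n:=\oH^0(\cO_{n\cE\times_\cZ\spec A})$ is finite; proper base change together with (1) gives $A_1=A/J$, and pushing forward $0\to\iota^*I^{(n)}\to\cO_{(n+1)\cE}\to\cO_{n\cE}\to 0$ (factoring $\cE\to\cZ$ through $j$ and using (2)) gives surjections $A_{n+1}\twoheadrightarrow A_n$, hence $A\twoheadrightarrow A_n$ for all $n$. As pushouts commute with the flat base change $u$ and a nilpotent thickening of an affine scheme is affine, $n\cY\times_\cZ\spec A\cong\spec A_n$; thus $\phi_n$ is a closed immersion, and with $J_n:=\Ker(A\to A_n)$ one has $J^n\subseteq J_n\subseteq J$.

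Next I would observe that hypothesis (3) is exactly the statement that, on such a chart, $\varprojlim_n A_n\cong\varprojlim_n A/J^n$, i.e.\ that the ind‑scheme $\varinjlim_n(n\cY\times_\cZ\spec A)$ is the formal completion of $\spec A$ along $\spec(A/J)$. Hence $\varinjlim_n n\cY$, taken in formal algebraic stacks, is canonically the formal completion $\widehat{\cZ}_{/\cY}$. On the other hand, colimits commute with pushouts, so $\varinjlim_n n\cY$ is also the pushout of $\varinjlim_n n\cE=\widehat{\cX}_{/\cE}$ with $\cY$ over $\cE$; therefore $\widehat{\cZ}_{/\cY}\cong\widehat{\cX}_{/\cE}\sqcup_{\cE}\cY$ as formal stacks, and $g$ induces $\widehat{\cX}_{/\cE}\to\widehat{\cZ}_{/\cY}$.

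To conclude I would check the universal property. Let $\cW$ be an algebraic stack equipped with $a\colon\cX\to\cW$, $b\colon\cY\to\cW$ and a $2$‑isomorphism $a\circ\iota\cong b\circ f$. Since $g$ is an isomorphism over $\cZ\smallsetminus\cY$ and $\cE\to\cY\times_\cZ\cX$ is a homeomorphism (so that $\cX\smallsetminus\cE\xrightarrow{\sim}\cZ\smallsetminus\cY$), $a$ descends to $a'\colon\cZ\smallsetminus\cY\to\cW$; restricting $a$ to $\widehat{\cX}_{/\cE}$ and gluing with $b$ via the formal description above produces $\widehat c\colon\widehat{\cZ}_{/\cY}\to\cW$ agreeing with $a'$ on the punctured formal neighbourhood. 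A formal gluing (Beauville–Laszlo) theorem for morphisms to algebraic stacks then yields the factorization $c\colon\cZ\to\cW$, and any two such agree on $\cZ\smallsetminus\cY$ (forced to be $a\circ g^{-1}$) and on $\widehat{\cZ}_{/\cY}$ (forced by $c\circ g\cong a$, $c\circ j\cong b$, and the pushout property of each $n\cY$), hence coincide; this is the desired pushout property.

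The hard part will be the last step: one needs a formal gluing statement valid for morphisms from $\cZ$ to an \emph{arbitrary} algebraic stack $\cW$ — that such a morphism amounts to a morphism on $\cZ\smallsetminus\cY$, a morphism on $\widehat{\cZ}_{/\cY}$, and a compatibility on the punctured formal neighbourhood — in the generality where $\cY$ is a possibly non‑Cartier closed substack. This is where local excellence of $\cZ$ enters and where one must invoke the algebraization results for the formal neighbourhood; the preceding steps are comparatively routine consequences of \Cref{cor_induction_for_thickenings}, proper base change, and hypothesis (3), which is engineered precisely so that the formal completion of $\cZ$ along $\cY$ is recovered from the $\cE$‑side.
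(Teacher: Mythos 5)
Your overall strategy is the same as the paper's: reduce to an \'etale affine chart, use \Cref{cor_induction_for_thickenings} to produce the compatible system of morphisms on the infinitesimal thickenings, use hypothesis (3) to identify the limit of these thickenings with the $J$-adic completion, and then glue the ``formal'' data with the open part $\cX\smallsetminus\cE\cong\cZ\smallsetminus\cY$. The problem is that the step you defer --- ``a formal gluing (Beauville--Laszlo) theorem for morphisms to an arbitrary algebraic stack'' --- is not an off-the-shelf citation; it is essentially the entire content of the proof, and as you have set it up it does not quite make sense: a morphism out of the ind-stack $\varinjlim_n n\cY$ to $\cW$ is \emph{by definition} just a compatible system of morphisms on the thickenings, and there is no general gluing statement that combines such a system with a morphism on $\cZ\smallsetminus\cY$ along a possibly non-Cartier center without first \emph{algebraizing} the formal datum. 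Moreover, the compatibility you propose to check ``on the punctured formal neighbourhood'' is not the right one: the two morphisms must be compared on all of $\widehat{Z}\times_Z\cX$ (which contains the entire preimage of $\cE$), not merely away from it.

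The paper fills exactly these holes with two specific tools whose hypotheses are the reason for the standing assumptions (affine stabilizers, excellence) and for hypothesis (3). First, it works not with the formal stack but with the honest affine scheme $\widehat{Z}=\spec(\widehat{A})$, $\widehat{A}=\lim_n A/J^n$, which is \emph{flat} over $Z$; Tannaka duality \cite[Thm.~1.1]{HR19} together with coherent completeness of the pair $(\spec(A/J),\widehat{Z})$ upgrades the compatible system $\spec(\oH^0(\cO_{n\cE}))\to\cS$ furnished by \Cref{cor_induction_for_thickenings} to an actual morphism $\widehat{Z}\to\cS$ --- this is precisely where (3) enters, making the two towers cofinal. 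A second Tannaka argument, using coherent completeness of $(\widehat{Z}\times_Z\cX,\cE)$ and the cofinality of the ideal systems of $\spec(A/J^n)\times_Z\cX$ and of $n\cE$ (which uses that $\cE\to\cY\times_\cZ\cX$ is a homeomorphism), shows that this morphism agrees with $\cX\to\cS$ after pullback to $\widehat{Z}\times_Z\cX$. Second, the gluing itself is the flat Mayer--Vietoris theorem \cite{mayer_viet_squares}*{Thm.~A} applied to the flat map $\widehat{Z}\to Z$, which is an isomorphism over $Y$ while $\cX\to Z$ is an isomorphism over $Z\smallsetminus Y$: it exhibits $Z$ as the pushout of $\cX\leftarrow\widehat{Z}\times_Z\cX\rightarrow\widehat{Z}$, from which the desired factorization (\'etale-locally, then by descent) follows. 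So your outline is salvageable only after (i) replacing the formal colimit by $\spec\widehat{A}$ via coherent completeness and Tannaka duality, (ii) verifying agreement on all of $\widehat{Z}\times_Z\cX$, and (iii) replacing the unnamed gluing theorem by \cite{mayer_viet_squares}*{Thm.~A}; as written, the central step is asserted rather than proved. (Your preliminary claims, e.g.\ that $n\cY\times_\cZ\spec A\cong\spec A_n$ because pushouts commute with flat base change, also need justification, but the paper sidesteps them by working directly with $\spec(\oH^0(\cO_{n\cE}))$.)
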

\begin{Remark}
    In the proof of \Cref{prop_pushout_criteria} will use our assumptions on all the stacks being essentially of finite type and with affine stabilizers, to apply \cites{mayer_viet_squares,HR19}. Moreover, condition (3) in \Cref{prop_pushout_criteria} always holds if all stacks above are algebraic spaces and if $\cZ$ is constructed using \cite{artin1970algebraization}, see \cite{artin1970algebraization}*{Lem. 6.5}.
\end{Remark}
\begin{proof}
\textcolor{black}{We first focus on the case in which $\cZ=Z=\spec(A)$ as in point (3), and so $\cY$ is a scheme which we denote by $Y$. Let $\widehat{A}:=\lim_n(A/J^n)$, let $\widehat{Z}:=\spec(\widehat A)$ and let $\cS$ be an algebraic stack with maps $\cX\to \cS$ and $Y\to \cS$, and an isomorphism $(\cE\to Y\to \cS)\to (\cE\to \cX\to \cS)$.
}

\textcolor{black}{First we prove that there is a morphism $\widehat{Z}\to \cS$ extending $Y=\spec(A/J)\to \cS$.
    Indeed, from Tannaka duality \cite[Thm. 1.1]{HR19}, we have an equivalence of groupoids 
 $$\Hom(\widehat Z,\cS) \cong \Hom( {\rm Coh}(\cS), {\rm Coh}(\widehat{Z})).$$
 Since the pair  $(\spec(A/J), \widehat{Z})$ is coherently complete, we have that 
 $$ \Hom({\rm Coh}(\cS), {\rm Coh}(\widehat{Z})) \cong  \Hom({\rm Coh}(\cS), \lim_n {\rm Coh}(\spec(\oH^0(n\cE)))) \cong \lim_n \Hom({\rm Coh}(\cS), {\rm Coh}(\spec(\oH^0(n\cE))))  $$
 which using Tannaka duality again gives us 
 $$ \Hom(\widehat{Z}, \cS) \cong \lim_n \Hom(\spec(\oH^0(n\cE)), \cS).$$
Therefore it is enough to find a compatible sequence of morphisms $\spec(\oH^0(n\cE))\to \cS$. This follows from \Cref{cor_induction_for_thickenings}.}

\textcolor{black}{Therefore we have the following diagram:
$$\xymatrix{\cE\ar[d] \ar[r] & \widehat{Z}\times_Z\cX\ar[d] \ar[r] & \cX\ar[d] \\ Y\ar[r]& \widehat{Z}\ar[r] &Z}$$and we have maps $\cX\to \cS$ and $\widehat{Z}\to \cS$. We need to check that they agree. This follows again from Tannaka duality. Indeed, the pair $(\widehat{Z}\times_Z\cX,\cE)$ is coherently complete, so $$\Hom(\widehat{Z}\times_Z \cX, \cS) \cong \lim_n \Hom(\spec(A/I^n)\times_Z\cX, \cS) \cong \Hom(n\cE, \cS)$$where the last equivalence follows since the topological spaces of $\cE$ and $\spec(A/I)\times_Z\cX$ agree, so if $\cJ_n$ are the ideal sheaves of $\spec(A/I^n)\times_Z\cX$, then the sequences of ideals $\{\cJ_n\}$ and $\{\cO_\cX(n\cE)\}$ are such that for every $m\ge 0$ there is an $n\ge 0$ such that $\cJ_n\subseteq \cO_\cX(-m\cE)$ and $\cO_\cX(-n\cE)\subseteq \cJ_m$.}

\textcolor{black}{Therefore we can safely add the arrows to $\cS$, with no ambiguity:$$\xymatrix{\cE\ar[d] \ar[r] & \widehat{Z}\times_Z\cX\ar[d] \ar[r] & \cX\ar[d]\ar[dr]& \\ Y\ar[r]& \widehat{Z}\ar[r] \ar@/_1pc/[rr] &Z &\cS}$$ But now observe that $\widehat{Z}\to Z$ is flat, it induces an isomorphism on $Y$, and the map $\cX\to Z$ is an isomorphism on $Z\smallsetminus Y$. Therefore in the following diagram, both the left square and the outer square are pushouts from \cite{mayer_viet_squares}*{Thm. A}:
$$\xymatrix{\widehat{Z}\times_Z(Z\smallsetminus Y) \ar[r] \ar[d] & \widehat{Z}\times_Z\cX\ar[d]\ar[r] & \widehat{Z}\ar[d]\\ Z\smallsetminus Y = \cX\smallsetminus \cE\ar[r] & \cX \ar[r] & Z}$$
So also the right square is a pushout.}

\textcolor{black}{The general case when $\cZ$ is a Deligne-Mumford stack follows from \'etale descent (to construct the map $\cZ\to \cS$) and the fact that the group of isomorphisms form a sheaf, and this enables us to construct the two isomorphisms $(\cX\to \cS)\to (\cX\to \cZ\to \cS)$ and $(\cY\to \cS)\to (\cY\to \cZ\to \cS)$.}
\end{proof}

\subsection{Proof of \Cref{teo_contraction_cms}}
The goal of this section is to prove the following result:
\begin{Teo}\label{teo_contraction_cms}
    Let $\cY$ be a separated, smooth \textcolor{black}{tame} Deligne-Mumford stack and let $\pi:\cE \rightarrow \cY$ be a weighted projective bundle with positive dimensional fibers. Suppose we are given a closed embedding $\cE \hookrightarrow \cX$ of codimension $1$ (i.e. $\cE$ is a Cartier divisor in $\cX$) where $\cX$ is a smooth \textcolor{black}{tame} and separated Deligne-Mumford stack. Furthermore, assume that the normal bundle $\cN_{\cE\vert\cX}$ is isomorphic to $\cO_{\cE}(-1)\otimes \pi^*\cL$ for a line bundle $\cL$ on $\cY$. Then there exists a contraction
    $$ 
    \begin{tikzcd}
    E \arrow[d, "p"] \arrow[r, hook] & X \arrow[d, dashed, "\rho"] \\
    Y \arrow[r, dashed, hook]   & Z                  
    \end{tikzcd}
    $$
    at the level of coarse moduli spaces such that:
    \begin{enumerate}
        \item $\rho$ is an isomorphism away from $E$,
        \item $\rho$ is projective,
        \item $\rho_*\cO_X \simeq \cO_Z$,
        \item there is a highly divisible integer $\ell$, an isomorphism $\cO_Z^{\wedge} \simeq  \varprojlim_k\left(p_*\cO_{k\ell E} \right)$ where the completion of $\cO_Z$ is with respect to the ideal sheaf of $Y$,
             $k\ell E$ is the coarse space of the closed substack with ideal sheaf $\cO_{\cX}(-k \ell \cE)$, and
             $p$ is as in the diagram above.
        \item $Z$ is a separated normal algebraic space with $Y\to Z$ a closed embedding.        
    \end{enumerate} Moreover, given any other algebraic space $Z'$ and morphisms $X\to Z'$, $Y\hookrightarrow Z$ with these properties, we have an isomorphism $Z\simeq Z'$ which commutes with all the other morphisms.
\end{Teo}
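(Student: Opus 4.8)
The plan is to first upgrade the hypotheses to an \emph{infinitesimal} contraction of the stacks, then algebraize it, and finally package the result at the level of coarse moduli spaces; uniqueness is then formal.

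\emph{Step 1 (infinitesimal thickenings).} Write $f=\pi\colon\cE\to\cY$ and $\cI=\cO_\cX(-\cE)$. The conormal sheaf is $\cI/\cI^2\cong\cO_\cE(1)\otimes\pi^*\cL^{-1}$, so the graded pieces of the $\cI$-adic filtration are $\cI^n/\cI^{n+1}\cong\cO_\cE(n)\otimes\pi^*\cL^{-n}$. Since the fibers of $\pi$ are positive-dimensional weighted projective spaces, one has $\pi_*\cO_\cE=\cO_\cY$, and, using the cohomology of line bundles on weighted projective stacks together with the projection formula, $R^1\pi_*\bigl(\cO_\cE(n)\otimes\pi^*\cL^{-n}\bigr)=R^1\pi_*\cO_\cE(n)\otimes\cL^{-n}=0$ for every $n\ge 1$. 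Thus the hypotheses of \Cref{cor_induction_for_thickenings} hold, and I obtain a compatible tower of square-zero extensions $n\cE\hookrightarrow(n+1)\cE$ (the thickenings of $\cE$ inside $\cX$) together with thickenings $n\cY$ of $\cY$, proper maps $f^{(n)}\colon n\cE\to n\cY$, and isomorphisms $\cO_{n\cY}\cong f^{(n)}_*\cO_{n\cE}$, each square being a pushout. Passing to coarse moduli spaces --- which for tame Deligne--Mumford stacks is exact on quasi-coherent sheaves and commutes with proper pushforward --- yields a formal thickening $\{nY\}$ of $Y$ in a formal neighborhood of $E$ in $X$, with $\cO_{nY}\cong p^{(n)}_*\cO_{nE}$.

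\emph{Step 2 (algebraization).} These data assemble into a morphism of Noetherian formal algebraic spaces $\widehat{X}_{/E}\to\widehat{Z}:=\varinjlim nY$ which is proper (properness of $\cE\to\cY$ persists through thickenings, coarse spaces and completions, using separatedness of $\cY$) and an isomorphism on the complement of $E$; that is, a formal modification in the sense of Artin. I would then invoke Artin's algebraization theorem for formal modifications \cite{artin1970algebraization}, following the treatment of Castelnuovo's contraction criterion in \cite{kol_resol}, to obtain an algebraic space $Z$ and a proper morphism $\rho\colon X\to Z$ whose completion along $E$ is $\widehat{X}_{/E}\to\widehat{Z}$, which is an isomorphism off $E$, and for which $Y\hookrightarrow Z$ is a closed immersion. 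This gives (1) and the closed-embedding part of (5), and (4) holds by construction since $\cO_Z^{\wedge}\cong\cO_{\widehat{Z}}$. For (3), the theorem on formal functions together with $p_*\cO_{nE}=\cO_{nY}$ forces $(\rho_*\cO_X)^{\wedge}\cong\cO_Z^{\wedge}$ along $Y$, while $\rho$ is an isomorphism elsewhere, so $\rho_*\cO_X=\cO_Z$. For (2): $\rho^{-1}(Y)=E$ set-theoretically, and $-E$ restricts to an ample class on each fiber $E_y\cong\bP(a_1,\dots,a_n)$ of $\rho$ over $Y$ (as $\cO_\cX(\cE)|_{\cE_y}\cong\cO_{\cE_y}(-1)$), so $-E$ is $\rho$-ample and $\rho$ is projective.

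\emph{Step 3 (normality, separatedness, uniqueness).} Since $X$ is smooth, hence normal, and $\rho$ is proper with $\rho_*\cO_X=\cO_Z$, the map $X\to Z$ factors through the normalization $Z^\nu\to Z$, and $\rho_*\cO_X=\cO_Z$ forces this finite birational map to be an isomorphism, so $Z$ is normal; it is separated because $X$ is and $\rho$ is proper and surjective. For uniqueness, given $Z'$ with properties (1)--(5): by (1), $Z'\smallsetminus Y\cong X\smallsetminus E\cong Z\smallsetminus Y$; by (4), $(\cO_{Z'})^{\wedge}_Y\cong\varprojlim p^{(k)}_*\cO_{nE}^{(k)}\cong(\cO_Z)^{\wedge}_Y$; and both identifications are induced by the respective maps from $X$, hence agree on the punctured formal neighborhood. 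By the Mayer--Vietoris square theorem \cite{mayer_viet_squares}, each of $Z$ and $Z'$ is recovered as the pushout of $\,Z\smallsetminus Y\hookleftarrow(Z\smallsetminus Y)\times_Z\widehat{Z}\hookrightarrow\widehat{Z}\,$ (for the respective completion), so the isomorphisms above glue to an isomorphism $Z\cong Z'$ commuting with the maps from $X$ and the embeddings of $Y$. I expect Step 2 to be the main obstacle: checking that the formal tower of Step 1 genuinely satisfies the hypotheses of Artin's algebraization theorem and extracting from it the algebraic space $Z$ with properties (1)--(4); Steps 1 and 3 are, by comparison, routine consequences of the deformation-theoretic input recorded above and of standard descent.
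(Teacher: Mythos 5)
Your overall strategy is the same as the paper's: build the tower of thickenings $n\cY$ via \Cref{cor_induction_for_thickenings} (this is exactly \Cref{lemma:step1}), contract using Artin, obtain $\rho_*\cO_X\simeq\cO_Z$ by formal functions, then prove normality and uniqueness. The genuine gap is your Step 2. You assert that the formal tower "assembles into a formal modification in the sense of Artin" and defer the verification, but that verification is precisely the content of \Cref{prop:existence contraction cms}, and it does not follow from the vanishing you establish in Step 1. Two concrete points. First, Artin's criterion must be applied on the coarse space, where $E$ need not be Cartier with invertible conormal: one first replaces $\cE$ by $n\cE$ with $n$ divisible by the orders of all stabilizers, so that $\cO_\cX(-n\cE)$ descends to a line bundle $I$ on $X$ and $nE=V(I)$ is Cartier, and then applies \cite[Cor.~6.10]{artin1970algebraization} to $p^{(n)}\colon nE\to nY$; your Step 1 works on the stack with the reduced $\cE$ and skips this reduction entirely. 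Second, the hypotheses of that criterion are not "proper plus isomorphism off $E$": one must check (a) $R^1p^{(n)}_*(F\otimes(I/I^2)^{\otimes m})=0$ for every coherent sheaf $F$ on $nE$ and $m\gg 0$, which requires proving that $I/I^2$ is relatively ample (via its restriction to the fibers) and invoking Serre vanishing, and (b) $R^1p^{(n)}_*((I/I^2)^{\otimes m})=0$ for all $m>0$, which the paper proves by induction on the thickening order using the filtration by the sheaves $J^{(k)}$. Your Step 1 only gives $R^1\pi_*$-vanishing for the graded pieces $\cI^k/\cI^{k+1}$ of the structure sheaf on the stack; this yields neither (a) nor (b) without the additional ampleness and induction arguments. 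Until these are supplied, the algebraization step, and hence the existence of $Z$ with properties (1)--(4), is not proved.

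Two secondary comparisons. Your normality argument (factor $\rho$ through the normalization and use $\rho_*\cO_X=\cO_Z$ to force the normalization to be an isomorphism) is correct and simpler than the paper's Serre-criterion argument, and the order of quantifiers is fine since the paper also establishes $\rho_*\cO_X\simeq\cO_Z$ before normality, via the theorem on formal functions and \cite[Thm.~2.3]{artin1970algebraization}. Your uniqueness argument via formal gluing along $\widehat Z$ differs from the paper's \Cref{lemma_isom_coarse_moduli_space}, which is a short Zariski's-main-theorem argument using only properties (1), (2) and (5); your route additionally needs property (4) for $Z'$, a verification that the two identifications agree on the punctured formal neighbourhood, and the hypotheses of the Mayer--Vietoris pushout theorem (which the paper itself only invokes affine-locally), so while plausible it carries more unchecked details than the paper's argument.
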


\begin{Lemma}\label{lemma:step1}
    With the same hypotheses and notation of \Cref{teo_contraction_cms}, let $n\cE$ denote the closed substack of $\cX$ defined by the ideal $I^n$, where $I$ denotes the ideal sheaf defining $\cE$; set $1\cY:=\cY$ and $\pi^{(1)}=\pi$. Then there exist unique (up to unique isomorphism) algebraic stacks $\{n\cY\}_{n\geq 1}$, infinitesimal thickenings by square-zero ideals $n\cY\hookrightarrow(n+1)\cY$ and morphisms $\pi^{(n)}:n\cE\to n\cY$ such that for each $n$ there is a cartesian diagram
    $$ 
    \begin{tikzcd}
    n\cE \arrow[d] \arrow[r, hook] & (n+1)\cE \arrow[d] \\
    n\cY \arrow[r, hook]   & (n+1)\cY.                  
    \end{tikzcd}
    $$
    Moreover, we have a canonical isomorphism $\pi^{(n)}_*\cO_{n\cE} \simeq n\cY$.
\end{Lemma}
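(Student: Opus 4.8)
The plan is to build the tower $\{\,n\cY\,\}_{n\ge 0}$ by induction on $n$, the engine at each step being the deformation‑theoretic extension result \Cref{prop_extension_of_Y} (equivalently its iterated form \Cref{cor_induction_for_thickenings}). Set $\cE^{(0)}=\cE$, $0\cY:=\cY$, $\pi^{(0)}:=\pi$; since $\pi$ is a weighted projective bundle with positive‑dimensional fibres one has $\pi_*\cO_\cE\cong\cO_\cY$, which is the $n=0$ case of the last assertion. For the inductive step write $I=\cO_\cX(-\cE)$ for the ideal of $\cE$ in $\cX$, so $\cE^{(n)}=V(I^{n+1})$. Because $\cE$ is a Cartier divisor $I$ is invertible, hence the ideal $I^{(n)}:=I^{n+1}/I^{n+2}$ of $\cE^{(n)}$ in $\cE^{(n+1)}$ is an $\cO_\cE$‑module, and $(I^{n+1})^2\subseteq I^{2n+2}\subseteq I^{n+2}$ shows the thickening $\cE^{(n)}\hookrightarrow\cE^{(n+1)}$ is square‑zero. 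Moreover the hypothesis $\cO_\cX(\cE)|_\cE\cong\cO_\cE(-1)\otimes\pi^*\cL$ gives a canonical identification $I^{(n)}\cong\cO_\cE(n+1)\otimes\pi^*\cL^{\otimes-(n+1)}$.

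Assume inductively a morphism $\pi^{(n)}\colon\cE^{(n)}\to n\cY$ with $\cO_{n\cY}\cong\pi^{(n)}_*\cO_{\cE^{(n)}}$, with $n\cY$ separated (inherited from $\cY$ through the square‑zero thickenings). To apply \Cref{prop_extension_of_Y} to $f=\pi^{(n)}$ and the thickening $\cE^{(n)}\hookrightarrow\cE^{(n+1)}$ it remains to check $R^1\pi^{(n)}_*I^{(n)}=0$. Since $\cE^{(n)}\to n\cY$ has the same underlying topological space as $\pi$ and $I^{(n)}$ is an $\cO_\cE$‑module, this group equals $R^1\pi_*\bigl(\cO_\cE(n+1)\bigr)\otimes\cL^{\otimes-(n+1)}$, which by cohomology and base change is computed on the fibres of $\pi$; these are weighted projective spaces $\cP(a_1,\dots,a_r)$ of dimension $\ge 1$, and $H^1\bigl(\cP(a_1,\dots,a_r),\cO(m)\bigr)=0$ for every $m\ge 1$ — for $r\ge 3$ this is the vanishing of intermediate cohomology, and for $r=2$ it follows from Serre duality since $\omega_{\cP(a_1,a_2)}=\cO(-a_1-a_2)$. \Cref{prop_extension_of_Y} then yields the square‑zero thickening $n\cY\hookrightarrow(n+1)\cY$, the morphism $\pi^{(n+1)}\colon\cE^{(n+1)}\to(n+1)\cY$, the identification $\cO_{(n+1)\cY}\cong\pi^{(n+1)}_*\cO_{\cE^{(n+1)}}$, the description of the ideal of $n\cY$ in $(n+1)\cY$ as $\pi^{(n)}_*I^{(n)}$, and the pushout property; being a thickening of the separated stack $n\cY$, $(n+1)\cY$ is separated, so $\pi^{(n+1)}$ is proper. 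Uniqueness up to unique isomorphism propagates along the tower because the solution of \Cref{prop_extension_of_Y} is so at each stage: as in that proof the ambiguity lies in $\E^0(\bL_{n\cY},\tau_{\ge 1}Rf_*I^{(n)})=\Hom(\Omega^1_{n\cY},R^1\pi_*I^{(n)})=0$, which vanishes again by the fibrewise computation.

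What is left is the assertion that each square is cartesian, and this is the technical heart. One may check it after a smooth base change $Y_0\to\cY$ over which the weighted projective bundle becomes a product, so that $\cE\cong Y_0\times\cP(a_1,\dots,a_r)$ and $\cL$ is trivial; unwinding the definitions, cartesianness at level $n$ amounts to saying that the image of the natural map $(\pi^{(n+1)})^*\pi^{(n)}_*I^{(n)}\to\cO_{\cE^{(n+1)}}$ is the whole ideal $I^{(n)}=I^{n+1}/I^{n+2}$ of $\cE^{(n)}$ in $\cE^{(n+1)}$, equivalently that $\pi^*\pi_*\cO_\cE(n+1)\to\cO_\cE(n+1)$ becomes surjective after twisting by the conormal line bundle. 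I expect this surjectivity — and its compatibility with the identification $\cO_{(n+1)\cY}\cong\pi^{(n+1)}_*\cO_{\cE^{(n+1)}}$ obtained above — to be the main obstacle: it has to be extracted from the explicit description of the weighted projective bundle $\cE$ and of the divisor $\cE\subset\cX$, presumably by reducing to the model weighted blow‑up $Bl_0\bA^r\to\bA^r$ of \Cref{blowupoftheorigin} and computing the relevant ideal sheaves there. Everything else in the statement — existence, the identifications $\pi^{(n)}_*\cO_{\cE^{(n)}}\cong\cO_{n\cY}$, and uniqueness — is a formal consequence of iterating \Cref{prop_extension_of_Y}.
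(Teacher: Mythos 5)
Your construction is, for everything the paper actually proves, the same as the paper's proof: one invokes \Cref{cor_induction_for_thickenings} (i.e.\ iterates \Cref{prop_extension_of_Y}), and the only hypothesis needing verification is the vanishing of $R^1$ of the successive square-zero ideals, which, since $\cE$ is Cartier, are $\cO_\cE$-modules of the form $\cO_\cE(k)\otimes\pi^*\cL^{\otimes -k}$ with $k\ge 1$, so the vanishing follows from $\oH^1(\cP(a_1,\dots,a_r),\cO(k))=0$ together with cohomology and base change (the paper cites Meier, Prop.\ 2.5, where you argue it by hand; your off-by-one in indexing is immaterial). The identification $\pi^{(n)}_*\cO_{\cE^{(n)}}\simeq\cO_{n\cY}$ and the uniqueness also come straight out of \Cref{prop_extension_of_Y}. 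One small slip: the uniqueness group $\E^0(\bL_{n\cY},\tau_{\ge 1}Rf_*I^{(n)})$ vanishes purely for degree reasons (the cotangent complex sits in degrees $\le 0$, the truncation in degrees $\ge 1$); the group $\Hom(\Omega^1_{n\cY},R^1\pi_*I^{(n)})$ you wrote is the $\E^1$ term controlling existence, not $\E^0$. This does not affect your argument.

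Where you diverge is the cartesianness of the squares, which you single out as the technical heart and leave open, proposing to extract it from the model weighted blow-up. Your instinct that it does not follow formally is right, but the outcome is the opposite of what you expect: the fibrewise surjectivity you correctly identify as equivalent to cartesianness (relative global generation of $\cO_\cE(k)$, up to a twist from $\cY$) fails for genuinely weighted fibres. For example on $\cP(2,3)$, or on $\cP(4,6)$ as in the paper's application to $\overline{\mathscr{M}}_{1,n}$, one has $\oH^0(\cO(1))=0$, so at the first step the ideal $\pi_*\bigl(\cO_\cE(1)\otimes\pi^*\cL^{\vee}\bigr)$ is zero and the first thickening of $\cY$ is trivial, while the corresponding thickening of $\cE$ is not; the square cannot be cartesian. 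This is consistent with the paper's explicit warning after \Cref{prop_extension_of_Y} that these extension squares are in general \emph{not} cartesian: the word ``cartesian'' in the statement of \Cref{lemma:step1} should be read as ``commutative'' (the squares are pushouts), the paper's own proof makes no attempt to prove more, and only the isomorphism $\pi^{(n)}_*\cO_{\cE^{(n)}}\simeq\cO_{n\cY}$ is used later. So, modulo dropping that unprovable (and unneeded) claim, your proposal is complete and coincides with the paper's argument.
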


\begin{proof}\textcolor{black}{It suffices to check that the assumptions of \Cref{cor_induction_for_thickenings} apply. The only non-trivial statement to check is that $R^1\pi_*\iota^*I^{(n)}=0$, where $\iota:\cE\to (n+1)\cE$ is the inclusion of the reduced structure.   
    But $\iota^*I^{(n)} =N_{\cE/\cX}^{\otimes -n} $ since $\cE$ is Cartier, and $\pi$ is flat, the fibers of $\pi$ are weighted projective bundles, and $\cN_{\cE\vert\cX}^{\otimes -n}$ restricts to $\cO_\cP(n)$, where $\cP $ is a weighted projective stack. The desired vanishing follows as $\cO_\cP(n)$ has no higher cohomology from \cite[Prop. 2.5]{meier2015vector}, and from cohomology and base change.}
\end{proof}

\begin{Prop}\label{prop:existence contraction cms}
  With the same hypotheses and notation of \Cref{teo_contraction_cms},  there exists a contraction 
    $$ 
    \begin{tikzcd}
    E \arrow[d] \arrow[r, hook] & X \arrow[d, dashed, "\rho"] \\
    Y \arrow[r, dashed, hook]   & Z                  
    \end{tikzcd}
    $$
   at the level of coarse moduli spaces which is an isomorphism away from $E$ and such that $X \rightarrow Z$ is a proper morphism and $Y\to Z$ is a closed embedding. Moreover, we have $\rho_*\cO_X \simeq \cO_Z$ and $\cO_Z^{\wedge} \simeq  \varprojlim_k\left(p_*\cO_{k\ell E} \right)$, where the completion of $\cO_Z$ is with respect to the ideal sheaf of $Y$.
    \end{Prop}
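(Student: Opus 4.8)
The plan is to realize $Z$ by algebraizing a formal contraction built from the thickenings of \Cref{lemma:step1}, via Artin's existence theorem for modifications \cite{artin1970algebraization}. First I would pass to coarse moduli spaces, writing $X,Y,E$ for those of $\cX,\cY,\cE$. Since $\cX$, $\cY$ and all the stacks $\cE^{(n)}$, $n\cY$ are tame, coarse-space pushforward is exact and preserves structure sheaves, so \Cref{lemma:step1} yields closed subspaces $nE\subseteq X$ (the coarse space of $\cE^{(n)}$), finite thickenings $nY$ of $Y$ (the coarse space of $n\cY$, with $0\cY=\cY$), and proper morphisms $p^{(n)}\colon nE\to nY$ with $p^{(n)}_*\cO_{nE}\cong\cO_{nY}$; for $n=0$ this is the coarse version of $\pi$. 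Because the coarse map $c\colon\cX\to X$ is finite and $\cE$ is Cartier, $\bigoplus_n c_*\cO_\cX(-n\cE)$ is a finitely generated $\cO_X$-algebra, so the ideals $c_*\cO_\cX(-n\cE)$ define a system of infinitesimal neighbourhoods of $E$ cofinal with the usual one; hence the $p^{(n)}$ assemble into a proper, adic morphism of formal algebraic spaces $\psi\colon\widehat X_{/E}\to\mathfrak Z$, where $\mathfrak Z:=\varinjlim_n nY$ is a formal algebraic space supported on $|Y|$ with reduced subspace $Y$, and $\psi_*\cO_{\widehat X_{/E}}\cong\cO_{\mathfrak Z}$ while $R^i\psi_*\cO_{\widehat X_{/E}}=0$ for $i>0$.

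The crucial point — and the main obstacle — is that $\mathfrak Z$ is a \emph{Noetherian} formal algebraic space. Since $c$ is finite it suffices to show $\widehat\cO_\cY:=\varprojlim_n\cO_{n\cY}$ is locally Noetherian, and for this I would compute the associated graded of the ideal $\cI$ of $\cY$ in $\widehat\cO_\cY$. As $\cE$ is a Cartier divisor with conormal bundle $\cN^{\vee}_{\cE/\cX}\cong\cO_\cE(1)\otimes\pi^*\cL^{-1}$, the square-zero extension $(k-1)\cY\hookrightarrow k\cY$ coming from \Cref{prop_extension_of_Y} inside the proof of \Cref{lemma:step1} has ideal $\pi_*\big(\cO_\cE(k)\otimes\pi^*\cL^{-k}\big)\cong\pi_*\big(\cO_\cE(k)\big)\otimes\cL^{-k}$; here one uses $R^1\pi_*\big(\cO_\cE(k)\otimes\pi^*\cL^{-k}\big)=0$, which holds because $\cO_\cP(k)$ has vanishing higher cohomology on a weighted projective stack \cite[Prop. 2.5]{meier2015vector}. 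Therefore
$$\mathrm{gr}_\cI\big(\widehat\cO_\cY\big)\;\cong\;\bigoplus_{k\ge 0}\pi_*\big(\cO_\cE(k)\big)\otimes\cL^{-k},$$
which, $\pi$ being a weighted projective bundle, is a finitely generated $\cO_\cY$-algebra — smooth-locally on $\cY$ it is the weighted symmetric algebra $\Sym^{\bullet}$ of the dual of the weighted vector bundle defining $\cE$ — hence locally Noetherian. A ring complete with respect to an ideal whose associated graded ring is Noetherian is itself Noetherian, so $\widehat\cO_\cY$, and therefore $\mathfrak Z$, is Noetherian; it is separated because $Y$ is.

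It then remains to invoke Artin's theorem: the triple $(X\supseteq E,\ \mathfrak Z\supseteq Y,\ \psi)$ is a formal modification (proper, adic, $\psi_*\cO=\cO_{\mathfrak Z}$, with $\mathfrak Z$ Noetherian and $Y$ of finite type), so \cite{artin1970algebraization} produces a separated algebraic space $Z$ of finite type over $k$, a closed embedding $Y\hookrightarrow Z$, and a proper morphism $\rho\colon X\to Z$ restricting to an isomorphism $X\smallsetminus E\xrightarrow{\ \sim\ }Z\smallsetminus Y$ and whose completion along $Y$ is $\psi$; in particular $\cO_Z^{\wedge}\cong\varprojlim_n p^{(n)}_*\cO_{nE}$, which is the asserted identification. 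Finally $\rho_*\cO_X\cong\cO_Z$: the natural map $\cO_Z\to\rho_*\cO_X$ is an isomorphism over $Z\smallsetminus Y$, and also after completion along $Y$ by the theorem on formal functions applied to the proper morphism $\rho$; since $\rho_*\cO_X$ is coherent, these two checks suffice. Apart from the Noetherianity of $\mathfrak Z$ — where the hypothesis on $\cN_{\cE/\cX}$ together with the cohomology vanishing on weighted projective stacks is essential — the argument is bookkeeping with coarse spaces of tame stacks and a direct application of \cite{artin1970algebraization}.
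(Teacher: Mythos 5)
Your construction of the formal target $\mathfrak{Z}=\varinjlim nY$ and the Noetherianity argument via the associated graded are reasonable (and parallel to how the paper uses the stacky thickenings of \Cref{lemma:step1}), but the final step is where the proof breaks: you invoke \cite{artin1970algebraization} on the grounds that $\psi\colon \widehat X_{/E}\to\mathfrak Z$ is ``proper, adic, $\psi_*\cO=\cO_{\mathfrak Z}$, with $\mathfrak Z$ Noetherian,'' and call this a formal modification. That is not Artin's definition. A formal modification (Def.\ 1.7 of \cite{artin1970algebraization}) requires, beyond properness, conditions on the Cram\'er and Jacobian ideals which encode that $\psi$ is formally an isomorphism away from a subspace of definition; these are exactly the conditions that force the eventual map $X\to Z$ to be an isomorphism off $E$, and they do not follow from $\psi_*\cO_{\widehat X_{/E}}\cong\cO_{\mathfrak Z}$. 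A concrete failure: take $E=\mathbb P^1\times Y$ with trivial normal bundle inside $X=E\times\mathbb A^1$. All the properties in your parenthesis hold ($nE\to nY$ proper with $\cO$-connected fibers, $\mathfrak Z=Y\times\operatorname{Spf}k[[t]]$ Noetherian, $\psi$ adic and proper), yet no contraction of $E$ to $Y$ that is an isomorphism away from $E$ exists. So the negativity hypothesis $\cN_{\cE\vert\cX}\cong\cO_\cE(-1)\otimes\pi^*\cL$ must enter precisely at this step, whereas in your write-up it only enters through the vanishing $R^1\pi_*(\cO_\cE(k)\otimes\pi^*\cL^{-k})=0$ used to build $\mathfrak Z$ and to get $\psi_*\cO=\cO_{\mathfrak Z}$ — statements that the trivial-normal-bundle example also satisfies.

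This is why the paper does not verify Definition 1.7 by hand but instead applies Artin's contraction criterion \cite[Cor.\ 6.10]{artin1970algebraization}: after choosing $n$ divisible by all stabilizer orders so that $\cO_\cX(-n\cE)$ descends to an invertible ideal $I$ on $X$ with $nE=V(I)$ Cartier, one checks (i) that $I/I^2$ is relatively ample on $nE\to nY$ (this is where $\cO_\cE(-1)$, as opposed to $\cO_\cE$ or $\cO_\cE(-m)$ twisted arbitrarily, is used, via the restriction to the weighted projective fibers and \cite[Prop.\ 2.5]{meier2015vector}), giving Serre vanishing $R^1p^{(n)}_*(F\otimes(I/I^2)^{\otimes m})=0$ for $m\gg0$, and (ii) by induction on the thickenings that $R^1p^{(n)}_*((I/I^2)^{\otimes m})=0$ for all $m>0$. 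These are the hypotheses that replace a direct verification of the formal-modification conditions. To repair your argument you would either have to verify Artin's conditions (1.7) for $\psi$ directly — essentially redoing the work of Section 6 of \cite{artin1970algebraization} — or switch to the criterion of \cite[Cor.\ 6.10]{artin1970algebraization}, at which point you recover the paper's proof; the remaining part of your argument ($\cO_Z^\wedge\simeq\varprojlim p^{(k)}_*\cO_{nE}^{(k)}$ and $\rho_*\cO_X\simeq\cO_Z$ via formal functions and the isomorphism off $Y$, as in \cite[Thm.\ 2.3]{artin1970algebraization}) is fine.
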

    \begin{proof}
    First let us recall the following: if $\cX\to\cY$ is a closed embedding of tame Deligne-Mumford stacks, then the corresponding map of coarse moduli spaces $X\to Y$ is a closed embedding. Moreover if $\cI_\cX\to \cO_\cY$ is the ideal sheaf of $\cX$ and $\pi:\cY\to Y$ is the coarse moduli space map, then $\pi_*\cI_\cX\to \pi_*\cO_\cY = \cO_Y$ is the ideal sheaf of $X$, because $\cY$ is a tame Deligne-Mumford stack.

    As in the proof of \Cref{lemma:step1}, we denote $\cI=\cO_{\cX}(-\cE)$ the ideal sheaf of $\cE$, for a positive integer $n$ we let $n\cE$ be the closed substack with ideal sheaf $\cI^n=\cO_{\cX}(-n\cE)$ and we denote the square-zero sheaf of ideals of $n\cE$ in $n\cE$ by $J^{(n)}:=i_*N_{\cE/\cX}^{\otimes -n}$, where $i:\cE\to (n+1)\cE$ is the inclusion of the reduced structure. Recall that $R^1\pi^{(n)}_*J^{(n)}=0$.   
    
     Take a positive integer $\ell$ which is divisible by the order of every stabilizer of a closed point of $\cX$. Then $\cO_\cX(-\ell\cE)$ descends to a line bundle $I$ on $X$, the coarse moduli space of $\cX$ (since the stabilizers act trivially on $\cO_\cX(-\ell\cE)$). If we denote by $\ell E$ the coarse moduli space of $\cE^{(\ell)}$, then $\ell E\to X$ is a closed embedding with ideal sheaf $I$. So in particular $\ell E$ is a Cartier divisor, and we have a diagram as follows, where $\ell Y$ is the coarse moduli space of the algebraic stack constructed using \Cref{cor_induction_for_thickenings}:
     $$\xymatrix{\ell E\ar[r] \ar[d]_{p^{(\ell )}} & X \\ \ell Y. &}$$
     According to \cite[Cor. 6.10]{artin1970algebraization}, it suffices to prove that
     \begin{enumerate}
         \item for every coherent sheaf $F$ on $\ell E$ we have $R^1p^{(\ell )}_*(F\otimes(I/I^2)^{\otimes m}) = 0$ for $m$ big enough, and
         \item $R^1p^{(\ell )}_*((I/I^2)^{\otimes m}) = 0$ for every $m>0$.
     \end{enumerate}

     Observe that, if $\iota:E\to \ell E$ is the inclusion of the reduced structure, $\iota^*I/I^2$ restricts to $\cO_P(\ell )$ along each fiber of $E\to Y$, where we denote by $\cO_P(\ell )$ the push forward of $\cO_{\cP}(\ell )$ via the coarse moduli space map $\cP\to P$, and $\cP$ is a weighted projective stack. In particular, $\iota^*I/I^2$ is relatively ample. So from \cite[\href{https://stacks.math.columbia.edu/tag/0B5V}{Tag 0B5V}]{stacks-project} also $I/I^2$ is relatively ample, hence the first vanishing is just Serre's vanishing theorem.

     For the second vanishing it suffices to check that, if $\cL$ is a line bundle on $\ell \cE$ such that $i^*\cL$ (which is a line bundle on $\cE$) restricts to $\cO_\cP(k)$ along each fiber $\cP$ of $\cE\to \cY$, with $k>0$, then $R^1\pi^{(\ell)}_*(\cL^{\otimes m}) = 0$ for every $m>0$.

    On $\cE$, the desired vanishing follows from cohomology and base change, as $\cO_{\cP}(k)$ has no higher cohomology when $k>0$, since $\cP$ is a weighted projective bundle. On any thickening $\cE \hookrightarrow n\cE$ this follows by an inductive argument. Thus we assume that the desired vanishing holds for $n$ and we will show that it holds for $n+1$. Consider the short exact sequence
     $$0\to J^{(n)}\to \cO_{(n+1)\cE}\to \cO_{n\cE}\to 0.$$
     This leads to 
     $$0\to J^{(n)}\otimes \cL^{\otimes m}\to \cL^{\otimes m}\to \cL^{\otimes m}|_{n\cE}\to 0.$$
     Observe first that $$R^1\pi^{(n+1)}_*(\cL^{\otimes m}|_{n\cE})=j^{(n)}_*R^1\pi^{(n)}_*(\cL^{\otimes m}|_{n\cE})$$ where $j^{(n)}$ is the closed embedding $n\cY\to (n+1)\cY$, (because the push forward via a closed embedding is exact). By induction $R^1\pi^{(n)}_*(\cL^{\otimes m}|_{n\cE})=0$, so $R^1\pi^{(n+1)}_*(\cL^{\otimes m}|_{n\cE})=0$, and it suffices to check that $R^1\pi_*^{(n+1)}(J^{(n)}\otimes \cL^{\otimes m})=0$ as well. But $J^{(n)}\otimes \cL^{\otimes m} = i_*(N_{\cE/\cX}^{\otimes -n}\otimes i^*\cL^{\otimes m})$ so
     $$R^1\pi_*^{(n+1)}(J^{(n)}\otimes \cL^{\otimes m}) = j_*R^1\pi_*(J^{(n)}\otimes \cL^{\otimes m})$$ which is 0 as $R^1\pi_*(J^{(n)}\otimes \cL^{\otimes m})=0$. Therefore, we may apply \cite[Cor. 6.10]{artin1970algebraization} to obtain a $Z$ as in the statement of \Cref{prop:existence contraction cms} and we are left with proving the moreover part.

     Let $\cO_Z^{\wedge}$ denote the formal completion of $\cO_Z$ along $\ell Y$. We claim that the natural map $\cO_Z \to \rho_*\cO_X$ is an isomorphism upon completion: $\cO_Z^\wedge\simeq (\rho_*\cO_X)^\wedge$. If this is the case, then by \cite[Thm. 2.2]{artin1970algebraization} we obtain $\cO_Z \simeq \rho_*\cO_X$, because we already have an isomorphism $\cO_{Z\smallsetminus nY} \simeq \rho_*\cO_{X\smallsetminus nE}$.

     To prove our claim, first note that $\cO_Z^{\wedge}$ is isomorphic to the formal algebraic space $\mathfrak{X}$ constructed in \cite[Thm. 3.1, Thm. 6.2]{artin1970algebraization}. Moreover, (see \cite[Lem. 6.5]{artin1970algebraization}) we have 
     \[\cO_Z^{\wedge} \simeq \varprojlim_k \left(p^{(\ell)}_*\cO_{k\ell E}\times_{p_*^{(\ell)}\cO_{\ell E}} \cO_{\ell Y} \right) \simeq  \varprojlim_k\left(p^{(\ell)}_*\cO_{k\ell E} \right),\]
     where in the last isomorphism we used the fact that $p_*^{(\ell)}\cO_{\ell E} \simeq \cO_{\ell Y}$: this follows from the isomorphism $\pi^{(\ell)}_*\cO_{\ell\cE} \simeq \cO_{\ell\cY}$ of \Cref{lemma:step1}, pushed forward along the coarse moduli space map $\ell\cY\to \ell Y$.

     Observe now that by the theorem on formal functions \cite{EGAIII}*{4.1.5} we have $\varprojlim\left(p^{(\ell)}_*\cO_{k\ell E} \right) \simeq (\rho_*\cO_X)^\wedge$ (note that this agrees with the limit in \Cref{prop:existence contraction cms} since $E$ and $\ell E$ have the same underlying topological space). Putting all together, we deduce that $\cO_Z^\wedge\simeq (\rho_*\cO_X)^\wedge$, as claimed.
    \end{proof}

    \begin{Prop}
        The algebraic space $Z$ constructed in \Cref{prop:existence contraction cms} is normal.
    \end{Prop}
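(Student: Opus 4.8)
The strategy is to show that the normalization of $Z$ is an isomorphism, the two inputs being that $X$ is normal and that $\rho_{*}\cO_{X}\simeq\cO_{Z}$. First I would record the easy reductions. The space $X$ is normal, being the coarse moduli space of the smooth tame Deligne--Mumford stack $\cX$: \'etale-locally $X$ is the spectrum of a ring of invariants $\cO_{V}^{G}$ for a smooth chart $[V/G]\to\cX$ with $V$ regular and $|G|$ invertible in $k$, and invariants of a normal ring under a finite group are normal. The space $Z$ is reduced: the canonical map $\cO_{Z}\to\rho_{*}\cO_{X}$ is an isomorphism by part (3) of \Cref{teo_contraction_cms}, and $\rho_{*}\cO_{X}$ has no nonzero nilpotent sections since $\cO_{X}$ does not. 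Finally, by our standing conventions $Z$ is (locally) excellent, hence Nagata, so its normalization $\nu\colon\widetilde{Z}\to Z$ is a finite morphism.

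Next I would produce a factorization through $\nu$. Since $\rho$ is proper and restricts to an isomorphism over the dense open $Z\smallsetminus Y$, it is surjective, and it identifies the (finitely many) irreducible components of $X$ with those of $Z$, inducing on each one an isomorphism over a dense open, hence a birational morphism. As $X$ is normal and $Z$ is reduced, the universal property of the normalization yields a unique factorization $\rho\colon X\xrightarrow{\widetilde{\rho}}\widetilde{Z}\xrightarrow{\nu}Z$. Applying pushforward, the comorphism $\rho^{\sharp}\colon\cO_{Z}\to\rho_{*}\cO_{X}$ then factors as
\[\cO_{Z}\xrightarrow{\ \nu^{\sharp}\ }\nu_{*}\cO_{\widetilde{Z}}\xrightarrow{\ \nu_{*}(\widetilde{\rho}^{\sharp})\ }\nu_{*}\widetilde{\rho}_{*}\cO_{X}=\rho_{*}\cO_{X}.\]
Both $\nu^{\sharp}$ and $\nu_{*}(\widetilde{\rho}^{\sharp})$ are injective (a finite birational map onto a reduced space, resp.\ a dominant map between reduced spaces, is injective on structure sheaves, and pushforward preserves injectivity), while their composite is an isomorphism by part (3); hence $\nu^{\sharp}$ is an isomorphism. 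Therefore $\widetilde{Z}=\spec_{Z}\bigl(\nu_{*}\cO_{\widetilde{Z}}\bigr)=\spec_{Z}(\cO_{Z})=Z$, so $\nu$ is an isomorphism and $Z$ is normal.

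I do not expect a serious obstacle: the only point requiring care is verifying the hypotheses for the universal property of normalization — namely that $Z$ is reduced and that $\rho$ carries generic points to generic points — both of which are immediate from $\rho_{*}\cO_{X}\simeq\cO_{Z}$ and from $\rho$ being an isomorphism away from $Y$. (One could instead try to check Serre's criterion $R_{1}+S_{2}$ directly: $R_{1}$ is easy, since $Y$ has codimension $\geq 2$ in $Z$ — as $\cE\to\cY$ has positive-dimensional fibers and $\cE$ is a Cartier divisor in $\cX$ — so every codimension-$1$ point of $Z$ lies in $X\smallsetminus E$, which has quotient singularities and is therefore normal in codimension $1$; the delicate part of that route would be $S_{2}$, which is why the normalization argument is preferable.)
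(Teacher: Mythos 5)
Your argument is correct, but it takes a genuinely different route from the paper. The paper verifies Serre's criterion directly: ($R_1$) holds because every codimension-one point of $Z$ lies in $Z\smallsetminus Y$, where $\rho$ is an isomorphism onto the normal space $X$; for ($S_2$) it pulls a function $\varphi$ on $U\smallsetminus W$ back to $X$, extends it to a section of $\cO(nE)$, then uses the sequences $0\to\cO((n-1)E)\to\cO(nE)\to\cO(nE)|_E\simeq\cO_E(-n)\to 0$ to lower $n$ to $0$, and finally descends the resulting regular function via the isomorphism $\rho^\sharp\colon\cO_Z\to\rho_*\cO_X$. You instead run the standard ``proper morphism from a normal source with $\cO_Z\simeq\rho_*\cO_X$ has normal target'' argument: factor $\rho$ through the normalization $\nu\colon\widetilde Z\to Z$ by the universal property (your checks that $Z$ is reduced and that generic points go to generic points are exactly what is needed), and conclude $\nu^\sharp$ is an isomorphism from the injectivity of both factors of $\rho^\sharp$. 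This is not circular, since $\rho_*\cO_X\simeq\cO_Z$ is established in \Cref{prop:existence contraction cms} before normality is addressed — though you should cite that proposition rather than part (3) of \Cref{teo_contraction_cms}, whose proof comes later and subsumes the present statement. Your route is shorter and isolates the single input $\rho_*\cO_X\simeq\cO_Z$ (which the paper's $S_2$ step also ends up using), while the paper's route is more explicit about how functions extend across $Y$. Two small remarks: finiteness of $\nu$ (via excellence/Nagata, guaranteed by the paper's standing conventions) is not really needed — $\nu$ is integral, hence affine, so $\widetilde Z=\spec_Z(\nu_*\cO_{\widetilde Z})$ already gives the conclusion once $\nu^\sharp$ is an isomorphism; and normality of the coarse space $X$ needs no tameness, since invariants of a normal ring under any finite group are normal.
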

\begin{proof}
    To prove that $Z$ is normal we use Serre's criterion for normality \cite{EGAIV}*{5.8.6}. For this we have to prove that
    \begin{enumerate}
    \item[($R_1$)] every codimension one point of $Z$ is regular, and
    \item[($S_2$)] for every open subset $U\subset Z$ and every closed subset $W$ of codimension $\geq 2$, every regular function on $U\smallsetminus W$ extends to $U$.
    \end{enumerate}
    In what follows, we denote $\rho:X\to Z$ the morphism constructed in \Cref{prop:existence contraction cms}.
    The condition ($R_1$) is easy to check: any codimension one point $\zeta$ of $Z$ lies in $Z\smallsetminus Y$, hence the modification $\rho:X\to Z$ is an isomorphism around $\zeta$. Let $\xi$ be the unique point in the preimage of $\zeta$: then $\cO_{X,\xi}\simeq\cO_{Z,\zeta}$. Observe that $X$ is normal, because it is the coarse moduli space of a smooth Deligne-Mumford stack: in particular, by Serre's criterion \cite{EGAIV}*{5.8.6}, we have that $\cO_{X,\xi}$ is regular, hence also $\cO_{Z,\zeta}$ is regular.

    To check ($S_2$), let $U\subset X$ be an open subset and $W\subset U$ be a closed subvariety of codimension $\geq 2$. Let $\varphi$ be a regular function on $U\smallsetminus W$. 
    
    Suppose first that $W$ does not contain $Y\cap U$: in this case $\rho^*\varphi$ is a regular function on $\rho^{-1}U \smallsetminus \rho^{-1}W$ and $\rho^{-1}W$ has codimension $\geq 2$, hence by normality of $X$ we can extend $\rho^*\varphi$ to a regular function $\psi'$ on $\rho^{-1}U$. 

    Suppose instead that $W$ contains $Y\cap U$, hence its preimage along $\rho$ consists of the union of the proper transform $\widetilde{W}$ of $W$ and $E\cap \rho^{-1}U$. We can extend $\rho^*\varphi$ to a function on $\rho^{-1}U\smallsetminus (E\cap \rho^{-1}U)$, and we can further extend it to a section of $\cO(nE)(\rho^{-1}U)$ for some integer $n$. 
    
    If $n\leq 0$, then we have an inclusion $\cO(nE)\hookrightarrow \cO_X$, i.e. $\rho^*\varphi$ actually extends to a regular function $\psi'$ on $\rho^{-1}U$. If $n>0$ then we can look at the short exact sequence
    \[0 \to \cO((n-1)E) \to \cO(nE) \to \cO(nE)|_{E} \to 0. \]
    As the last term is isomorphic (up to shrinking the base) to $\cO_E(-n)$, by applying the functor $\Gamma(\rho^{-1}U,-)$ to this sequence we get an isomorphism $\Gamma(\rho^{-1}U,\cO(nE))\simeq \Gamma(\rho^{-1}U,\cO((n-1)E))$, hence we can actually extend $\rho^*\varphi$ to a section of $\cO((n-1)E)(\rho^{-1}U)$. Repeating this argument, we eventually deduce that $\rho^*\varphi$ extends also in this case to $\psi'\in \cO_X(\rho^{-1}U)$.

    By \Cref{prop:existence contraction cms}, we have that $\rho^{\sharp}:\cO_Z \to \rho_*\cO_X$ is an isomorphism, hence we can take the preimage of $\psi'$ along $\rho^\sharp$, which by construction will be an extension of $\varphi$ to $U$, thus concluding the proof.
\end{proof}
For what follows, we need the following technical result.

\begin{Lemma}\label{lemma_isom_coarse_moduli_space}
    Let $X\xrightarrow{p} X'$ be a proper morphism between normal separated algebraic spaces. Assume that $E\subseteq X$ is a pure codimension one irreducible closed algebraic space (namely, a divisor) with a map $p_E:E\to Y'$. Assume also that $p$ is an isomorphism away from the image of $E$, and the map $E\to X\to X'$ factors as $E\to Y'$ and $Y'\to X'$, with $Y'\to X'$ finite, as in the following cartesian diagram:
    \[
    \xymatrix{X\smallsetminus E \ar@{^{(}->}[r]\ar[d]_\cong & X \ar[d]^p \\ X'\smallsetminus p(E)\ar@{^{(}->}[r]& X'}
    \]
    Then $X'$ is uniquely determined by $p_E$ and $X\smallsetminus E$. 
\end{Lemma}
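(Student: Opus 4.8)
The plan is to reconstruct $X'$ from the given data by an explicit gluing, and then verify uniqueness via Zariski's main theorem. First I would note that since $p$ is proper, birational (an isomorphism away from $E$), and $X$ is normal, the morphism $p$ is its own Stein factorization in the sense that $p_*\cO_X$ is a sheaf of $\cO_{X'}$-algebras on $X'$; the key point is that $X'$ is normal, so by Zariski's main theorem $p_*\cO_X = \cO_{X'}$. Consequently $X'$ is determined as a \emph{ringed space} by the pair $(|X'|, p_*\cO_X)$, so the problem reduces to (a) reconstructing the topological space $|X'|$ and (b) reconstructing the sheaf $p_*\cO_X$ on it, both from $p_E \colon E \to Y'$, the finite map $Y' \to X'$, and the open complement $X \smallsetminus E$.

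For the topological space: $|X'|$ is the pushout of $|X \smallsetminus E| \leftarrow |E \smallsetminus (\text{generic fiber locus})|$... more cleanly, since $p$ contracts $E$ onto its image $p(E) \subseteq Y'$ and is an isomorphism elsewhere, $|X'|$ is the quotient of $|X|$ obtained by collapsing $E$ along $p_E$ (composed with $Y' \to X'$), and the image of $E$ in $X'$ is the scheme-theoretic/topological image of $Y'$. So $|X'| = (|X \smallsetminus E|) \sqcup_{|E|} |Y'_0|$ where $Y'_0 \subseteq Y'$ is the image of $E$; this is manifestly determined by the data. The subtle point, and the one I expect to be the main obstacle, is reconstructing the structure sheaf $p_*\cO_X$ near the contracted locus: on $X' \smallsetminus p(E) \cong X \smallsetminus E$ it is simply $\cO_{X\smallsetminus E}$, but along $p(E)$ one must describe sections of $p_*\cO_X$ as those functions on $X \smallsetminus E$ (equivalently on a punctured neighborhood) that extend across $E$, i.e. that lie in $p_*\cO_X$ rather than in $p_*\cO_X(mE)$ for $m > 0$. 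I would argue that, because $E$ is an irreducible Cartier-type divisor with $p_E$ having (geometrically) connected fibers over $Y'$ — this is forced by $p_*\cO_X = \cO_{X'}$ together with $Y'\to X'$ finite — the filtration $p_*\cO_X \subseteq p_*\cO_X(E) \subseteq \cdots$ is recovered from the normal bundle data of $E$ over $Y'$, exactly as in the proof of the $(S_2)$ condition in the previous proposition: a local function extends iff its order of pole along $E$ is $\le 0$, and this is detectable from the graded pieces $p_{E*}(N_{E/X}^{\otimes -m})$, which depend only on $E \to Y'$ and not on $X'$.

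Finally, for uniqueness: suppose $Z_1, Z_2$ are two normal separated algebraic spaces receiving proper maps $p_i \colon X \to Z_i$ with the stated properties and agreeing on $X \smallsetminus E$ and on the factorization $E \to Y' \to Z_i$. The identity on $X\smallsetminus E$ together with the common map $Y'\to Z_i$ glue (using the topological pushout description above) to a birational map $Z_1 \dashrightarrow Z_2$; I would show it is in fact a morphism by spreading out from the open locus $Z_i \smallsetminus p_i(E) \cong X\smallsetminus E$ and using that $Z_1$ is normal and the indeterminacy locus has codimension $\ge 2$ (since $Y' \to Z_i$ is finite, $p_i(E)$ has codimension $\ge 1$, but the map is already defined along the generic points of $p_i(E)$ because $p_i$ is an isomorphism in codimension one away from... — here I would instead argue directly: $Z_2 = \operatorname{Spec}_{Z_1}$ of a coherent sheaf of algebras, namely the pushforward, and both pushforwards equal $p_*\cO_X$). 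The resulting morphism $Z_1 \to Z_2$ is proper, birational, and representable; as $Z_2$ is normal, Zariski's main theorem shows it is an isomorphism, and it commutes with the maps from $X$ and from $Y'$ by construction. The main obstacle throughout is controlling the structure sheaf along the contracted divisor, which is handled by the normal-bundle computation already carried out for the normality proof.
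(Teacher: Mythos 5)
Your instincts (properness plus birationality plus normality of the target gives $p_*\cO_X=\cO_{X'}$, connected fibres, Zariski's main theorem) are the right ones, but the proposal has a genuine gap at the crux of the lemma, namely producing an actual morphism between two candidate contractions. Your first route treats $X'$ as recoverable from its underlying Zariski ringed space $(|X'|,p_*\cO_X)$; that reduction is only valid for schemes, whereas the lemma is stated, and is needed in the paper, for algebraic spaces ($Z$ is produced by Artin's contraction theorem and is a priori only an algebraic space), and an algebraic space is not determined by its topological space together with a Zariski structure sheaf. Your second route asserts that the glued birational map $Z_1\dashrightarrow Z_2$ is a morphism because $Z_1$ is normal and the indeterminacy locus has codimension $\geq 2$; this principle is false for non-affine targets (the inverse of a blow-up, e.g.\ $\bP^2\dashrightarrow \operatorname{Bl}_p\bP^2$, is defined away from a single point and extends to no morphism), and your fallback, that $Z_2$ is the relative $\operatorname{Spec}$ over $Z_1$ of a pushforward, presupposes exactly the morphism (or a common base) that one is trying to construct. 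Finally, the appeal to the graded pieces $p_{E*}(N_{E/X}^{\otimes -m})$ is neither needed nor justified: sections of $p_*\cO_X$ over an open $V$ are simply $\cO_X(p^{-1}V)$, so the delicate point is not an extension criterion across $E$ but the identification of $|X'|$ and of which opens one ranges over, and for that you would in any case need the connected-fibre argument you only sketch.

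The paper closes this gap by a different device: given two candidates $X'_1,X'_2$ with maps $p_1,p_2$, it forms the product map $(p_1,p_2)\colon X\to X'_1\times X'_2$, takes its image $\Gamma$ and the normalization $\Gamma^n$. The projection $\Gamma\to X'_1$ is proper (properness of the $p_i$), quasi-finite (over the contracted locus the fibre is the image of $p_1^{-1}(x)$, which lands in $Y'$, and $Y'\to X'_2$ is finite), and birational; hence $\Gamma^n\to X'_1$ is proper, quasi-finite and birational onto a normal space, so an isomorphism by Zariski's main theorem. Composing $X'_1\cong\Gamma^n\to\Gamma\to X'_2$ makes the rational map a genuine morphism, and a second application of Zariski's main theorem shows it is an isomorphism compatible with all the data. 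If you want to keep your reconstruction strategy you must either restrict to schemes or redo it on an \'etale presentation, and at that point you are essentially forced into a graph-plus-ZMT argument of this kind; I would recommend adopting it directly.
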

\begin{proof}
    Let $X'_1$ and $X'_2$ be two such algebraic spaces, with $p_i:X\to X'_i$; consider $X\to X_1'\times X_2'$ the product map $(p_1, p_2)$, let $\Gamma\subseteq X_1'\times X_2'$
    be its image, and $\Gamma^n$ its normalization. Since $X\to X_i'$ is proper, also $X\to X_1'\times X_2'$ is proper from \cite[\href{https://stacks.math.columbia.edu/tag/04NX}{Tag 04NX}]{stacks-project}. Similarly, $\Gamma\to X'_i$ is proper from \cite[\href{https://stacks.math.columbia.edu/tag/0AGD}{Tag 0AGD}]{stacks-project}. Since the normalization is a finite (hence proper) map, $\Gamma^n\to X'_1$ is proper.
    
    We claim that the map $\Gamma\to X'_1$ is quasi-finite. {The fiber of a point $x$ along this map is in natural bijection with $p_2(p_1^{-1}(x))$. Clearly, the fiber over a point in $X_1'\smallsetminus p_1(E)$ is just a single point. Let $x$ be a point in  $p_1(E)$. The restriction $p_i|_E\colon E \to X'_i$ factors as $E \to Y' \overset{f_i}{\to} X'_i$. Therefore, the fiber of $x$ along $\Gamma \to X'_1$ is in natural bijection with $f_2(f_1^{-1}(x))$, which is a finite set as both $f_1$ and $f_2$ are finite.}
    
    %The map $(p_1,p_2)\colon X\to X_1'\times X_2'$ factors through $f:X\to \Gamma$, and the fiber of $x$ along $\Gamma\to X'_1$ is $f(p^{-1}_1(x))$ (observe that $f$ is surjective as it is dominant and proper). But $f$ contracts $E$, namely, the composition $E\to X\xrightarrow{f} \Gamma$ factors as $E\to Y'\to \Gamma$, because $p_i|_E$ factors through $Y'$ for $i=1,2$. We deduce that the fiber of $x$ along $\Gamma \to X'_1$ is isomorphic to the image through $Y'\to X'_2$ of the preimage of $x$ along $Y'\to X_1'$. As the latter is finite, we have proved our claim.

    As the normalization is finite, also $\Gamma^n\to \Gamma\to X'_1$ is a proper, quasi-finite and birational morphism. Then it is an isomorphism from Zariski's main theorem. So the rational map $X'_1\dashrightarrow X_2'$ is a morphism. As the corresponding map of sets is quasi-finite and they are proper and birational, again from Zariski's main theorem they are isomorphic.
\end{proof}
We have now collected all the ingredients necessary to prove the main result of this section.
\begin{proof}[Proof of \Cref{teo_contraction_cms}]
    The existence of the contraction $X\to Z$ is \Cref{prop:existence contraction cms}. For proving the uniqueness part of the statement, suppose that $Z'$ is another scheme satisfying the thesis of the theorem: then we can apply \Cref{lemma_isom_coarse_moduli_space} to conclude that $Z'\simeq Z$. \textcolor{black}{Finally, the map $X\to Z$ is projective. Indeed, there is an integer $k>0$ such that $\cO_X(kE)=\cL$ is a Cartier divisor, and $\cL^{-1}$ is ample over $Z$, since the positive dimensional fibers of $X\to Z$ are weighted projective schemes, and $\cL$ restricts to an ample divisor on those.}
\end{proof}

 \section{The case of affine $\cY$}\label{section_local_analisis}
 In this section we prove \Cref{thm_affine_contraction}, which itself forms the heart of the proof of \Cref{Thm_intro}. To begin we establish some lemmas. 

 \begin{Lemma} \label{lem:surjobvi} Fix a finite module $V$ over a Noetherian ring $B$ and let $Y=\spec(A)$ be a finite scheme over $\spec(B)$. If we have a surjective map of $B$-modules $\rho\colon V \to A$, then the map $\spec(A) \to \mathbb{P}_B(V)$ induced by the surjection $V \otimes_B A \to A$ is a closed immersion. 
 \end{Lemma}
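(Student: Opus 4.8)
The plan is to recognise the morphism $\spec(A)\to\mathbb{P}_B(V)=\stProj_B(\Sym_B V)$ as the one induced on $\stProj$ by a surjection of graded $B$-algebras, and then invoke the standard fact that $\stProj$ of such a surjection is a closed immersion. Concretely, set $A[t]:=A\otimes_B B[t]=\bigoplus_{n\ge 0}At^n$ with $t$ in degree $1$; since $A[t]$ is a polynomial ring in one variable over $A$ we have a canonical identification $\stProj(A[t])\cong\spec(A)$, and on the standard affine chart $D_+(t)$ (which is all of $\stProj(A[t])$) the section $t$ trivialises $\cO(1)$, so $\cO_{\stProj(A[t])}(1)\cong\widetilde A$. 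First I would produce a graded $B$-algebra homomorphism $\phi\colon\Sym_B V\to A[t]$: as $\Sym_B V$ is the free commutative graded $B$-algebra on the degree-one module $V$, the $B$-linear map $V\to (A[t])_1=A$, $v\mapsto\rho(v)t$, extends uniquely to such a $\phi$, with $\phi_1=\rho$ under the identification $(A[t])_1\cong A$.

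The first substantive step is to check that $\phi$ is surjective. It is surjective in degree $0$ (it is the identity on $B$) and in degree $1$ (it is $\rho$). For $n\ge 1$, pick $v_0\in V$ with $\rho(v_0)=1$, which exists since $\rho$ is surjective; then for any $a\in A$, choosing $w\in V$ with $\rho(w)=a$, the multiplicativity of $\phi$ gives $\phi_n(w\cdot v_0^{\,n-1})=\rho(w)\rho(v_0)^{n-1}t^n=at^n$, so $\phi_n$ is onto $(A[t])_n=At^n$. Hence $\phi$ is a surjective map of graded $B$-algebras, source and target being generated in degree $1$ over $B$ and $A$ respectively, and moreover $\phi$ carries the irrelevant ideal $\bigoplus_{n\ge 1}\Sym_B^n V$ onto the irrelevant ideal $\bigoplus_{n\ge 1}At^n$. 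Therefore $\phi$ induces a morphism $\stProj(A[t])\to\stProj(\Sym_B V)=\mathbb{P}_B(V)$ that is defined on all of $\stProj(A[t])$ (because the image of the irrelevant ideal generates the irrelevant ideal of $A[t]$) and that is a closed immersion, this being the usual behaviour of $\stProj$ under surjections of graded rings generated in degree $1$.

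It then remains to identify this closed immersion with the morphism in the statement. By construction $\stProj(\phi)$ pulls the tautological quotient $V\otimes_B\cO_{\mathbb{P}_B(V)}\twoheadrightarrow\cO_{\mathbb{P}_B(V)}(1)$ back to the quotient $V\otimes_B\cO_{\stProj(A[t])}\to\cO_{\stProj(A[t])}(1)$ determined by $\phi_1=\rho$; transporting along $\stProj(A[t])\cong\spec(A)$, where $\cO(1)$ becomes $\widetilde A$ via the trivialisation by $t$, this quotient becomes precisely $\rho\otimes\mathrm{id}_A\colon V\otimes_B A\to A$. By the universal property of $\mathbb{P}_B(V)$, the morphism $\spec(A)\to\mathbb{P}_B(V)$ classifying $\rho\otimes\mathrm{id}_A$ is exactly the one in the statement, so that morphism is a closed immersion.

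I expect the only delicate point to be the bookkeeping in this last paragraph — reconciling the ``invertible quotient of $V\otimes_B\cO_S$'' description of $S$-points of $\mathbb{P}_B(V)$ with the $\stProj$-of-a-surjection description — together with the (standard, but convention-sensitive) assertion that a surjection of graded rings generated in degree one induces an everywhere-defined closed immersion on $\stProj$. As an alternative that avoids the explicit graded computation, one could observe that $\spec(A)\to\spec(B)$ is finite hence proper, while $\mathbb{P}_B(V)\to\spec(B)$ is separated, so $\spec(A)\to\mathbb{P}_B(V)$ is proper; it would then suffice to check it is a monomorphism, after which ``proper $+$ monomorphism $\Rightarrow$ closed immersion'' finishes the argument. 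I would nonetheless prefer the first route, since the monomorphism verification unwinds to the same surjectivity of $\rho$ used above.
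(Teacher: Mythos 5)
Your argument takes a genuinely different route from the paper's. The paper checks the statement fiberwise (using that $\spec(A)\to \spec(B)$ is finite, so by Nakayama one may assume $B$ is a field and $A$ an Artin local ring), chooses $v\in V$ with $\rho(v)=1$, factors the map through the chart $D_+(v)\subset \mathbb{P}_B(V)$ where $v$ does not vanish, and observes that the induced map of coordinate rings is surjective because its image contains that of $\rho$. Your global approach — realizing $\spec(A)$ as $\stProj(A[t])$ and the map as $\stProj$ of a graded homomorphism $\phi\colon \Sym_B V\to A[t]$, $v\mapsto \rho(v)t$ — is a legitimate alternative that avoids the fiberwise reduction, and your final identification with the classifying map of $\rho\otimes\mathrm{id}_A$ via the universal property is fine.

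There is, however, one incorrect step: $\phi$ is \emph{not} surjective in degree $0$. The degree-zero piece of $A[t]$ is $A$, not $B$, so $\phi_0$ is the structure map $B\to A$, which in general is far from surjective (take $B=k$, $A=k[x]/(x^2)$, $V=B^{\oplus 2}$ surjecting onto $A$). Hence $\phi$ is not a surjection of graded rings, and the standard fact you invoke (``$\stProj$ of a surjection is a closed immersion'') does not apply verbatim. The gap is easily closed: your computation does prove $\phi_n$ surjective for every $n\ge 1$, and surjectivity in all sufficiently large degrees already implies that $\stProj(\phi)$ is everywhere defined and a closed immersion. One sees this directly on the charts $D_+(v)$, $v\in V$: a degree-zero fraction $at^n/\phi(v)^n$ with $n\ge 1$ is in the image because $(A[t])_n=\phi(\Sym^n_B V)$, and for $n=0$ one rewrites $a/1=a\phi(v)/\phi(v)$ with $a\rho(v)t\in\phi(V)$; alternatively, factor $\phi$ through its image $B'\oplus\bigoplus_{n\ge 1}At^n$ (where $B'=\im(B\to A)$), which has the same $\stProj$ as $A[t]$ since the two agree in positive degrees. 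With this correction your proof goes through; your fallback route (proper, since finite over $\spec(B)$ and $\mathbb{P}_B(V)$ is separated, plus the monomorphism check via an element mapping to $1$) is also sound and is in fact closer in spirit to the paper's use of such an element.
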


 \begin{proof} It suffices to check this fiber locally on $\spec(B)$, so we may assume that $B$ is a field, $V$ is a vector space, and $A$ a Artin local ring. Now choose a $v \in V$ such that $v \mapsto 1 \in A$, then the induced map $\spec(A) \to \mathbb{P}(V)$ factors through the non-vanishing of $v \in \oH^0(\mathbb{P}(V),\mathcal{O}(1))$ and the associated map of coordinate rings is surjective because it contains the image of $\rho$.\end{proof} 

\begin{Teo}\label{thm_affine_contraction} Consider two affine schemes with a finite morphism $\spec(A)\to \cB:=\spec(B)$, with $\spec(A)$ smooth and connected and $\xi\in \spec(A)$ a point.
Assume that $\cX$ is a \textcolor{black}{tame} Deligne-Mumford stack, separated over $\cB$, and with coarse moduli space $X$ projective over $\cB$. Let $\cE\subseteq \cX$ be a Cartier divisor in $\cX$ isomorphic to $\cP:=\cP_A(a_1,...,a_n)$, with $0<a_1\le ...\le a_n$. Assume that $\cN_{\cE\vert\cX}:=\cO_\cX(\cE)|_\cE\cong \cO_{\cP}(-1)$.

Then, up to replacing $\cB$ with a Zariski open subset of $\cU\subseteq \cB$ containing the image of $\xi$, and $\spec(A)$, $\cE$ and $\cX$ with $\spec(A)\times_\cB \cU$, $\cE\times_\cB \cU$ and $\cX\times_\cB \cU$ respectively, there is an $\cX'$ fitting into a commutative diagram
 $$ 
    \begin{tikzcd}
    \cE \arrow[d] \arrow[r, hook] & \cX \arrow[d, dashed] \\
    \spec(A) \arrow[r, dashed, "i", hook]   & \cX'                  
    \end{tikzcd}
    $$
    such that
\begin{enumerate}
    \item $\cX'$ is a separated \textcolor{black}{tame} Deligne-Mumford stack with coarse moduli space projective over $\cB$,
    \item $i$ is a closed immersion which realizes $\spec(A)$ as a complete intersection of codimension $n$ and $\cX'$ is smooth in a neighbourhood of $\spec(A)$, and 
    \item there is an isomorphism between $\cX$ and a weighted blow-up of $\cX'$ at $\spec(A)$
    with weights $a_1,...,a_n$, which identifies $\cE$ with the exceptional divisor of such a blow-up.
\end{enumerate}
In particular, $\cX'$ is a scheme in a neighbourhood of $\spec(A)$, and $\cX\to \cX'$ is an isomorphism away from $\cE$.
\end{Teo}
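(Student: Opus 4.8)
The plan is to descend to coarse moduli spaces, run a weighted version of Kollár's proof of Castelnuovo's criterion \cite{kol_resol} there, and then re-stackify. \textbf{Step 1 (contraction of coarse spaces).} Apply the machinery of \Cref{section_contraction_on_cms} to $\cE\hookrightarrow\cX$ over $\cB$: since $\cO_\cX(-\cE)|_\cE\simeq\cO_\cP(1)$ is relatively ample over $\cY=\spec(A)$, the relevant higher direct images vanish and \Cref{prop:existence contraction cms} (see also \Cref{teo_contraction_cms}) produces a projective-over-$\cB$ morphism of coarse spaces $\rho\colon X\to Z$ with $Z$ normal, $\rho$ an isomorphism away from $E$, $\rho_*\cO_X\simeq\cO_Z$, and $Y:=\spec(A)\hookrightarrow Z$ a closed immersion; moreover the completion of $\cO_Z$ along $Y$ is $\varprojlim_k p^{(k)}_*\cO^{(k)}_{nE}$ in the notation of \Cref{lemma:step1}. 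As $Z$ is projective over the affine $\cB$, it is a scheme.

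\textbf{Step 2 (the Castelnuovo computation — the heart).} Set $I_k:=\rho_*\bigl(c_*\cO_\cX(-k\cE)\bigr)\subseteq\cO_Z$, where $c\colon\cX\to X$ is the coarse space map (exact on quasicoherent sheaves, as $\cX$ is tame); then $I_0=\cO_Z$ and $\mathfrak a:=I_1$ is the ideal of $Y$. Tensoring $0\to\cO_\cX(-\cE)\to\cO_\cX\to\cO_\cE\to0$ with $\cO_\cX(-k\cE)$, pushing forward to $Z$, using $\cO_\cX(-k\cE)|_\cE\simeq\cO_\cP(k)$ and the vanishing $R^1\rho_*\bigl(c_*\cO_\cX(-j\cE)\bigr)=0$ for $j\ge1$ from \Cref{section_contraction_on_cms}, one gets exact sequences exhibiting the associated graded ring $\bigoplus_k I_k/I_{k+1}$ of the filtered ring $\cO_Z$ as $\bigoplus_k\oH^0(\cP,\cO_\cP(k))$, i.e. the \emph{weighted} polynomial ring $A[x_1,\dots,x_n]$, $\deg x_i=a_i$, over $A=\cO_Z/\mathfrak a$. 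Shrinking $\cB$ around the image of $\xi$, lift the $x_i$ to sections $f_i\in\Gamma(Z,I_{a_i})$. Since a weighted monomial of weight $\ge N a_n$ has ordinary degree $\ge N$ in the $x_i$, the filtration $I_\bullet$ is cofinal with the $\mathfrak a$-adic one; hence $\bigcap_k I_k=0$, and a graded-Nakayama argument gives $\mathfrak a=(f_1,\dots,f_n)$ and identifies $\bigoplus_k I_k$ with the weighted Rees algebra of $(f_1,\dots,f_n)$ with weights $(a_1,\dots,a_n)$. Finally, by Step 1 the completion of $\cO_Z$ along $Y$ has associated graded ring the weighted polynomial ring over the regular ring $A$, hence is regular; so $Z$ is smooth over $k$ at every point of $Y$, the $f_i$ form a regular sequence there, and $Y=V(f_1,\dots,f_n)$ is a codimension-$n$ complete intersection in $Z$. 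Shrinking $\cB$ once more, we may assume $Z$ is smooth along $Y$.

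\textbf{Step 3 (re-stackification and identification with a weighted blow-up).} First, $\cX\smallsetminus\cE$ is a scheme near $\cE$: étale-locally $\cX\cong[\spec(R)/\mu_d]$ with $\cE=[V(f)/\mu_d]$ for a semi-invariant $f$ of weight $1$ (the inertia acts on $\cO_\cX(-\cE)|_\cE=\cO_\cP(1)$ with weight $1$), and inverting $f$ untwists the $\mu_d$-action, so $[\spec(R[f^{-1}])/\mu_d]$ is affine; these patches glue to show an open neighbourhood of $E$ in $X\smallsetminus E$ represents $\cX\smallsetminus\cE$ there. Let $\cX'$ be the gluing of the scheme $Z$ (over a neighbourhood of $Y$, smooth with $Y$ a codimension-$n$ complete intersection by Step 2) to $\cX\smallsetminus\cE$ along the common open $Z\smallsetminus Y\simeq\cX\smallsetminus\cE$; this is a separated tame Deligne--Mumford stack with coarse space $Z$, hence projective over $\cB$ and a scheme near $\spec(A)$, equipped with the closed immersion $i\colon\spec(A)\hookrightarrow\cX'$ and a morphism $p\colon\cX\to\cX'$ equal to $\cX\to X\xrightarrow{\rho}Z$ near $\cE$, the identity away from $\cE$, hence an isomorphism off $\cE$. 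Extending $\bigoplus_k I_k$ by $\cO_{\cX'}$ away from $Y$ gives a regular weighted embedding of $\spec(A)$ in $\cX'$ with weights $(a_1,\dots,a_n)$; and $p^{-1}(I_k)\cdot\cO_\cX\subseteq\cO_\cX(-k\cE)$, with equality for sufficiently divisible $k$ (then $\rho^*\rho_*\cO_X(-kE)\twoheadrightarrow\cO_X(-kE)$, as $\cO_X(-E)$ is $\rho$-ample), so the universal property \Cref{thm_univ_property_wblowup}(2) yields a morphism $q\colon\cX\to Bl_{\spec(A)}\cX'$ over $\cX'$ with $q^{-1}(\cE')=\cE$, where the exceptional divisor $\cE'\simeq\cP_A(a_1,\dots,a_n)$ by \cite{QR}*{Prop. 5.1.4}. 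Since $Bl_{\spec(A)}\cX'$ is normal (smooth near $\cE'$, being a regular weighted blow-up of a smooth stack) and $q$ is proper, birational and quasi-finite (it restricts to a dominant morphism of weighted projective bundles $\cE\to\cE'$ over $Y$), Zariski's main theorem — as in \Cref{lemma_isom_coarse_moduli_space} — shows $q$ is an isomorphism. This gives (1)--(3), and the two ``in particular'' clauses are built into the construction.

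\textbf{Where the difficulty lies.} The crux is Step 2: reading off $\bigoplus_k I_k/I_{k+1}$ as precisely the \emph{weighted} polynomial ring over $A$ — this needs the $R^1$-vanishings of \Cref{section_contraction_on_cms} together with careful bookkeeping of the mismatch between the weighted filtration $I_\bullet$ and the $\mathfrak a$-adic one — and then extracting from this both the regularity of $Z$ along $Y$ and the exact shape of the Rees algebra; this is the weighted analogue of Kollár's proof of Castelnuovo's criterion. A secondary technical point is Step 3's claim that $\cX\smallsetminus\cE$ is a scheme near $\cE$, and checking that the gluing and the final Zariski's-main-theorem comparison behave well for tame Deligne--Mumford stacks rather than schemes.
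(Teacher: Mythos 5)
Your route is genuinely different from the paper's (no induction on $n$: a direct computation of the associated graded of the filtration $I_k=\rho_*\cO_\cX(-k\cE)$, then the universal property \Cref{thm_univ_property_wblowup} plus Zariski's main theorem, instead of the paper's induction with divisor slices and the explicit chart in $[\bA^{n+1}/\Gm]$), but it has a genuine gap exactly at the point you yourself flag as the heart. The step ``shrinking $\cB$ around the image of $\xi$, lift the $x_i$ to sections $f_i\in\Gamma(Z,I_{a_i})$'' is unjustified: the obstruction to lifting a section of $I_{a_i}/I_{a_i+1}$ lies in $\oH^1(Z,I_{a_i+1})$, a coherent $B$-module which does not vanish by passing to a Zariski open of the affine base, since $Z\to\cB$ is projective, not affine. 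Lifting is unobstructed only modulo higher $I_N$ (those quotients are supported on the affine $Y$), i.e.\ formally along $Y$; but with only formal lifts the global conclusions (2)--(3) --- $\spec(A)$ cut out by $n$ functions after shrinking only $\cB$, and the identification of $\bigoplus_k I_k$ with the weighted Rees algebra of $(f_1,\dots,f_n)$ that your appeal to \Cref{thm_univ_property_wblowup} requires --- no longer follow as stated. This is precisely the difficulty the paper's proof is organized around: it twists by $\cG=\cO_\cX(\cH)\otimes\cO_\cX(m\cE)$ with $\cH$ pulled back from a very ample divisor chosen with Serre-type vanishing, proves the descending $\oH^1$-vanishings of its Step 1, and produces the divisors $\cD_i$ as zero loci of sections of $\cG\otimes\cO_\cX(-a_i\cE)$ (not of $\cO_\cX(-a_i\cE)$ itself), only trivializing the ample twist at the very end after shrinking; the induction on $n$, the slicing by the $\cD_i$, and the Cartier-divisor smoothness trick of \Cref{rmk_sm_cartier_on_smooth} replace your ``$\mathrm{gr}$ is a weighted polynomial ring, hence $Z$ is regular along $Y$'' argument, which in any case needs a careful filtered-ring lemma and descent of regularity along completion.

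Two further mismatches. First, you invoke \Cref{teo_contraction_cms} and \Cref{prop:existence contraction cms}, whose hypotheses include that $\cX$ is smooth and that the fibers of $\cE\to\cY$ are positive dimensional; in \Cref{thm_affine_contraction} the stack $\cX$ is only smooth near $\cE$ (automatically, by \Cref{rmk_sm_cartier_on_smooth}) and need not be normal elsewhere --- the paper flags this in \Cref{rmk_affine_case_is_the_same_as_Artin_case} --- so your claims that $Z$ is normal and projective-over-$\cB$ need separate justification, and the case $n=1$ (where $\cE$ is a $\mu_{a_1}$-gerbe over $\spec(A)$ and nothing is contracted) is not covered by your Step 1 at all, whereas the paper treats it as a necessary base case via the structure theory of \cite{GS17}. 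Second, a minor point: your claim that $\cX\smallsetminus\cE$ is a \emph{scheme} near $\cE$ should be weakened to ``algebraic space with trivial stabilizers'' as in \Cref{remark_cX_trivial_gen_stab}; that weaker statement suffices for your gluing, and your $\mu_d$-untwisting sketch does not give more. The final ZMT comparison can be localized near $\spec(A)$ (where your $\cX'$ is smooth), so that part is repairable, but as written the proposal's Step 2 does not close.
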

We now report two remarks which we will use in the proof of \Cref{thm_affine_contraction}.
\begin{Remark}\label{rmk_sm_cartier_on_smooth}
 If $X$ is a finite-type $k$-scheme where $D$ is a Cartier divisor of $X$ and $p\in D$ a smooth point of $D$, then $p$ is a smooth point of $X$. 
 \end{Remark}
\begin{Remark}\label{remark_cX_trivial_gen_stab}
Fix a separated algebraic stack $\cX$ and let $\cO_\cX(\cD)$ be a line bundle associated to a Cartier divisor $\cD$ on $\cX$. If $\cO_\cX(\cD)|_\cD$ induces a representable morphism $\cD\to \cB\Gm$, then there is a open substack $\cV \subset \cX$ containing $\cD$ such that $\cV\smallsetminus \cD$ has trivial stabilizers.
Indeed, being representable is an open condition for separated stacks, so there is an open substack $\cV\subseteq \cX$ such that $\cO_\cV(\cD)$ induces a representable morphism $\cV\to \cB\Gm$. As $\cO_{\cX}(\cD)\cong \cO_\cX$ away from $\cD$, the stack $\cV$ is an algebraic space away from $\cD$. In particular, $\cX$ has generically trivial stabilizer. 
\end{Remark}

\begin{proof}
Since $\cO_{\cP}(-1)$ induces a representable morphism $\cE\to B\Gm$, by \Cref{remark_cX_trivial_gen_stab} the stack $\cX$ has trivial stabilizers on $\cV \setminus \cE$ where $\cV$ is an open substack containing $\cE$. This implies the stacky structure of $\cX$ lies on the union of $\cE$ and the closed substack $\cX \setminus \cV$. As such, to prove the desired result it suffices to:

\begin{enumerate}
\item construct the coarse moduli space $X'$ of $\cX'$,
\item show there is a closed immersion $\spec(A) \to X'$,
    \item prove the coarse moduli space $X'$ is smooth around $\spec(A)$,
    \item show $X'$ contains $\spec A$ as a complete intersection of codimension $n$, and
    \item show that, in a neighbourhood of $\cE$, the stack $\cX$ is a weighted blow-up of $X'$ along $\spec(A)$.
\end{enumerate} 

\noindent Indeed, the remaining stacky structure on $\cX'$ is necessarily disjoint from an open substack containing $\spec(A)$, and therefore determined by $\cX$.

To do so, we will explicitly construct the contraction $X\to X'$ of \Cref{teo_contraction_cms} that contracts $E$ (the coarse moduli space of $\cE$). In this process, we will produce Cartier divisors $\cD_1$,..., $\cD_n$ on $\cX$ and their images $\widetilde{D}_1,$ ..., $\widetilde{D}_n$ in $X'$ such that $\cD_i \cap \cP_A(a_1,...,a_n)=\cP_A(a_1,..,\widehat{a_i},..,a_n)$; this enables the use of induction to establish (1)-(4) as in Koll\'ar's proof of Castelnuovo's theorem in \cite{kol_resol}.
 %These divisors will allow us to show $X'$ is smooth around $\spec(A)$. Indeed, by induction the $\widetilde{D}_i$ are smooth around $\spec(A)\subseteq X'$ and we may deduce smoothness by \Cref{rmk_sm_cartier_on_smooth}.
To prove (5) we will use the explicit description of a weighted blow-up of the origin in $\bA^{n}$ and the divisors $\cD_i$ and $\widetilde{D}_i$ to prove that $\cX$ is the desired weighted blow-up.

\textbf{Base step.} 
The desired statement is true if $n=1$. Indeed, by definition of a weighted projective stack when $n=1$ it follows that $\cE$ is a trivial $\bmu_n$-gerbe over $\spec(A)$. Moreover, because the ideal sheaf of the divisor restricts to $N_{\cE/\cX}$, Remark \ref{remark_cX_trivial_gen_stab} implies the only stabilizers in a neighbourhood of $\cE$ in $\cX$ are those on $\cE$. Since $\spec(A)$ is smooth, so is $\cE$, and by \Cref{rmk_sm_cartier_on_smooth} it follows that $\cX$ is smooth in a neighbourhood of $\cE$.
By applying \cite[Thm. 1]{GS17} one may obtain the stack $\cX$ in a neighbourhood of $\cE$ as a composition of root stacks and canonical covering stacks from its coarse moduli space, $X$. As non-trivial canonical covering stacks introduce new stabilizers in codimension at least 2, it follows that $\cX$ is, in a neighbourhood of $\cE$, a root stack of $X$ along a Cartier divisor $E$. The smoothness of $\cX$ in
a neighbourhood of $\cE$ implies the smoothness of $E$ since the associated cyclic cover is a local atlas for $\cX$ (see, e.g., \cite[Prop. 3.15]{javanpeykar_loughran_mathur_2022}). Moreover, since $\cE$ is a gerbe over the smooth (Weil) divisor $\spec A \subset X$, it follows that $E=\spec A$, $\spec A$ is Cartier in $X$, and therefore $X$ is smooth by Remark \ref{rmk_sm_cartier_on_smooth}. We may now assume $n>1$.

%Observe also that $\cP = \spec(A)\times \cP(a_1)\to \spec(A)$ has a section, so there is a morphism $\spec(A)\to \cX$, inducing $\spec(A)\to \cX\to X$ which factors via $E\to X$. As $\spec(A)\to X$ is injective, also $\spec(A)\to E$ is injective, hence birational\Gio{char 0} as both are of the same dimension. So from Zariski's main theorem, $\spec(A)\to E$ has to be an isomorphism. We now assume $n>1$.

\textbf{Step 1.} In this step we establish a useful vanishing result.

Consider $H$ a very ample Cartier divisor on $X$ such that $\oH^i(\cO_X(kH))=0$ for $k,i>0$. Let then $m$ a positive integer such that, if we denote by $\cH$ the pull-back of $H$ to $\cX$, we have that
$\cG:=\cO_\cX(\cH)\otimes\cO_{\cX}(m\cE)$ is the trivial line bundle when restricted to any fiber of $\cE\to \spec(A)$. 
This is possible as $\cN_{\cE\vert\cX}\cong\cO_\cP(-1)$. Moreover $m>0$ as $H$ is very ample, and up to replacing $H$ with some multiple of it we can assume $m>a_n+1$. Up to replacing $\cB$ with $\cU$ as in the statement, we can also assume that $$\cG|_\cE\cong \cO_{\cE}.$$ 
{Indeed, consider $\cE\to \spec(A)$, let $\tau:\cE\to E$ be its coarse moduli space, and let $\zeta:E\to \spec(A)$ be the morphism induced by $\cE\to \spec(A)$. As $\cG$ restricts to the trivial line bundle on any fiber of $\cE\to \spec(A)$, from \cite[Cor. 4.3]{noohipicard} there is a line bundle $\cN$ on $\spec(A)$ such that $\cG=\zeta^*\cN$. If we denote by $\rho:\spec(A)\to \cB$ the map in the statement of the theorem, as $\rho$ is finite the ring $A\otimes_BB_{\rho(\xi)}$ is semilocal. So $\cN|_{\spec(A\otimes_BB_{\rho(\xi)})}$ is trivial as semilocal rings have trivial Picard group. Then there is an open subscheme $\cU_{\spec(A)}$ containing $\rho^{-1}(\rho(\xi))$ such that $\cN$ is trivial when restricted to $\cU_{\spec(A)}$. We can take \[\cU=\cB\smallsetminus\rho(\spec(A)\smallsetminus \cU_{\spec(A)}).\] }

Consider now $\cG^{\otimes k}\otimes \cO_{\cX}(-j\cE)$, for $k> 0$. We have the following exact sequence:
$$0\to \cG^{\otimes k}\otimes \cO_{\cX}(-(j+1)\cE) \to \cG^{\otimes k}\otimes \cO_{\cX}(-j\cE)\to \cO_{\cP}(j)\to 0.$$
From \cite[Prop. 2.5]{meier2015vector}, $\oH^1(\cO_{\cP}(j))=0$ for every $j\ge -a_1 - a_2 - ... - a_n + 1$ so combining this with the sequence above we see that 
\begin{equation}\label{eq_1}
[\oH^1(\cG^{\otimes k}\otimes \cO_{\cX}(-(j+1)\cE))=0] \Longrightarrow [\oH^1(\cG^{\otimes k}\otimes \cO_{\cX}(-j\cE))=0] \text{ for }j\ge -a_1 -  ... - a_n + 1.
\end{equation}

%If we set $i=-j$, we have that %\begin{equation}\label{eq_1}
 %   \text{ as long as }i\le a_1+...+a_n - 1\text{, if }i\text{ decreases then }\dim(\oH^1(\cG^{\otimes k}\otimes \cO_{\cX}(i\cE)))\text{ grows.}
%\end{equation}
Moreover, by our choice of $H$ we have
\begin{equation}\label{eq_2}
    0 = \oH^1(\cO_X(kH)) = \oH^1(\cG^{\otimes k}\otimes \cO_{\cX}(-km\cE))\text{ for every }k\ge1.
\end{equation} Since $m> a_n + 1$,  we have $-km<-a_n-1$. Thus, combining \Cref{eq_1} and \Cref{eq_2} we have
$$\oH^1(\cG^{\otimes k}\otimes \cO_{\cX}(i\cE))=0\text{ for every }-a_n - 1 \le i \le a_1 + ... + a_n - 1\text{ and }k\ge 1.$$

\textbf{Step 2.} The line bundle $\cG$ descends to a base-point free line bundle on $X$, and if we denote by $X'=\operatorname{Proj}(\bigoplus_d \oH^0(\cG^{\otimes d}))$, the map $E\to X \to X'$ factors via $E\to \spec(A)\to X'$, where the second map is a closed embedding;
and $X\to X'$ is an isomorphism away from $E$. In particular, we can assume that $\cG$ is the pull back of a very ample divisor on $X'$.

To check that $\cG$ descends to a line bundle on $X$ is equivalent to checking that the action of the stabilizers of the points of $\cX$ are trivial along the fibers of $\cG$ (see, e.g., \cite[Thm. 10.3]{Alp}). This holds away from $\cE$ as on that locus $\cG$ is the pull-back of a line bundle on $X$, and holds on $\cE$ since $\cG|_\cE\cong \cO_{\cE}$.

To check that $\cG$ is base point free, consider the exact sequence 
$$0\to \cG\otimes \cO_{\cX}(-\cE)\to \cG \to \cG|_{\cE}\to 0.$$
From the previous vanishing, the map $\oH^0(\cX,\cG)\to \oH^0(\cX,\cG|_{\cE})$ is surjective, so there is a section not vanishing along $\cE$ since $\cG|_\cE\cong \cO_\cE$. Moreover, $\cG\otimes \cO_{\cX}(-m\cE)$ is the pull back of a very ample line bundle, so for every $p\in \cX\smallsetminus \cE$ there is a section of $\cG\otimes \cO_{\cX}(-m\cE)$, and hence of $\cG$,
not vanishing at $p$. So $\cG$ descends to a base point free line bundle on $X$, the corresponding morphism $X\to \operatorname{Proj}(\bigoplus_d\oH^0(\cG^{\otimes d}))$ is an isomorphism away from $\cE$ (since $\cG$ is very ample away from $\cE$), and it restricts to $\cO_{\cE}$ on $\cE$, so it factors through $\cE\to \spec(A)$ and induces a closed immersion $\spec(A)\to \bP(\oH^0(\cG))$ by Lemma \ref{lem:surjobvi} since $H^0(\cG) \to H^0(\cG|_{\cE})=A$ is surjective. 
Observe also that, as $\cX$ is smooth in a neighbourhood of $\cE$ then $X$ is normal in a neighbourhood of $E$, and similarly $X'$  normal in a neighbourhood of $\spec(A)$. \footnote{We are using the fact that if $X$ is a normal projective variety and $\cL$ is a base-point free line bundle on it, then also $\operatorname{Proj}\left(\bigoplus_m \oH^0(\cL^{\otimes m})\right)$ is normal. Indeed, if we denote by $s:X\to X'$ the Stein factorization of $X\to \bP(\oH^0(\cL))$, then (1) $X'$ is normal since $X$ is normal; (2) the map $\pi:X'\to \bP(\oH^0(\cL))$ is finite hence projective, so $X'=\operatorname{Proj}\left(\bigoplus_m \oH^0(\pi^*\cO(m))\right)$; (3) $s^*\pi^*\cO(1)\cong \cL$ by construction and (4) by projection formula, since $s_*\cO_X=\cO_{X'}$, we have $\oH^0(\pi^*\cO(m))=\oH^0(\cL^{\otimes m})$. }

\textbf{Step 3.} Construction of $\cD_i$ and inductive step. 

From the previous steps, we have a surjection
$$ \oH^0(\cG\otimes \cO_{\cX}(-a_i\cE))\to \oH^0(\cO_{\cE}(a_i)) \text{ for every }i.$$

Therefore there is a global section $\cD_i$ of $\oH^0(\cG\otimes \cO_{\cX}(-a_i\cE))$ that restricts to $x_i=0$ on $\cE\cong \cP(a_1,...,a_n)$. 
In particular:
\begin{enumerate}
    \item $\cD_i\cap \cE$ is a weighted projective space over $\spec(A)$ (namely, $\cP(a_1,...,\widehat{a_i},...,a_n)\times \spec(A)$), and
    \item The normal bundle of $\cD_i\cap \cE$ in $\cD_i$ is $\cO_{\cP(a_1,...,\widehat{a_i},...,a_n)}(-1)$.
\end{enumerate} 
By induction, there is a scheme $\widetilde{D}$ with a closed embedding $q:\spec(A)\to \widetilde{D}$, such that $\cD_1$ is the weighted blow-up of $q$ with weights $a_2,...,a_n$, and $\widetilde{D}$ is smooth around $q$.
Moreover, the curves contracted by $D_1\to \widetilde{D}$ are the ones contracted by $D_1\to X'$, where $D_1$ is the coarse moduli space of $\cD_1$. So the morphism $\cD_1\to \cX\to X\to X'$ factors as $\cD_1\to \widetilde{D}\to X'$ from \cite{debarre_book}*{Lem. 1.15}, with $\widetilde{D}\to X'$ finite as there are no curves in the fibers. In particular, as $\cG$ is by construction the pull-back of an ample line bundle $G$ on $X'$, also the line bundle $G|_{\widetilde{D}}$ is ample, and we have that $\widetilde{D}= \operatorname{Proj}(\bigoplus_d \oH^0(G^{\otimes d}|_{\widetilde{D}})) = \operatorname{Proj}(\bigoplus_d \oH^0(\cG^{\otimes d}|_{\cD_1}))$.

\begin{bf}Step 4.\end{bf} $\widetilde{D}\to X'$ is a closed embedding, and therefore $\spec(A)\to \widetilde{D}\to X'$ is a closed embedding.

It follows from the vanishing $\oH^1(\cG^{\otimes d }\otimes \cO_{\cX}(a_1\cE))=0$ and the following exact sequence
$$ 0\to \cO_\cX(-\cD_1)\otimes \cG^{\otimes d}\cong \cG^{\otimes (d-1)}\otimes \cO_\cX(a_1\cE)\to \cG^{\otimes d} \to \cG^{\otimes d}|_{\cD_1}\to 0$$
that there is a surjection $\oH^0(\cG^{\otimes d})\to \oH^0(\cG^{\otimes d}|_{\cD_1})$ for $d\ge2$, so the map $\widetilde{D} = \operatorname{Proj}(\bigoplus_d \oH^0(\cG^{\otimes d}|_{\cD_1}))\to \operatorname{Proj}(\bigoplus_d \oH^0(\cG^{\otimes d}))=X'$ is a closed embedding. Therefore, also the composition $\spec(A)\to \widetilde{D}\to X'$ is a closed embedding.

\textbf{Step 5.} $\widetilde{D}$ is Cartier in $X'$, so $X'$ is smooth at $\spec(A)$.

We will show that $\widetilde{D}$ agrees with the zero locus $V(s)$ of a section $s$ of a line bundle away from $\spec(A)$, and that both $\widetilde{D}$ and $V(s)$ are $S_1$ in a neighbourhood of $\spec(A)$. Consider the long exact sequence
$$0 \to \cG\otimes \cO_{\cX}(-a_1\cE) \to \cG \to \cO_{a_1\cE}\to 0.$$
Consider a section $s_\cX\in\oH^0(\cG)$ which is the image of
$\cD_1\in \oH^0(\cG\otimes \cO_{\cX}(-a_1\cE))$. This vanishes along $\cD_1$ and $a_1\cE$, and by definition of $\cG$ it is the pull-back of a section $s$ on an ample divisor on $X'$.
In particular, there is a section of an ample divisor on $X'$ whose zero locus agrees with the image of $\widetilde{D}$, and which is isomorphic to $\widetilde{D}$ away from $\spec(A)$. As $X'$ is normal in a neighbourhood of $\spec(A)$, it is also $S_2$. Then both the zero locus of a section of a line bundle and $\widetilde{D}$ (which is smooth by induction) are $S_1$ in a neighbourhood of $\spec(A)$. So they are the closure of their generic points, then they agree: $\widetilde{D}$ is Cartier.

\textbf{End of the proof.}
As in Step 5 we may construct $n$ Cartier divisors $\widetilde{D}_1,...,\widetilde{D}_n$. After shrinking around $\xi$, we can trivialize the line bundles associated to $\widetilde{D}_i$ for every $i$, so there are regular functions $s_i$ such that $\spec(A)=V(s_1,...,s_n)$.
From \cref{prop_checking_wblowup_smooth_locally}, to check that $\cX\to X'$ is a weighted blow-up, one can work locally around $\xi\in X'$. The regular functions $s_i$ give a map $X'\to \bA^n$, and up to shrinking $X'$ and $\spec(A)$ in a neighbourhood of $\xi$, we can trivialize $\cG$ so $\cD_i$ are sections of $\cO_{\cX}(-a_i\cE)$.

We can consider the weighted blow-up of $\cX'$ along $\spec(A)$ as in \cite[Sec. 2]{Inc} and \Cref{blowupoftheorigin}, by taking an open inside $[\bA^{n+1}_{x_1,...,x_n,u}/\Gm]$ with weights $a_1,...,a_n,-1$. Recall that a map to $[\bA^{n+1}/\Gm]$ is the data of a line bundle $\cL$, a section of $\cL^{a_i}$ for every $i$, and a section of $\cL^{-1}$. Then the divisors $\cD_i$ and $\cE$ give a morphism $\cX\to [\bA^{n+1}/\Gm]$. Since $\pi^*\widetilde{D}_i = \cD_i + a_i\cE$, where $\pi:\cX\to X'$, and if $x_1,...,x_n$ are the coordinates on $\bA^{n}$ and $y_1,...,y_n,u$ are those on $\bA^{n+1}$, then the weighted blow-up sends $x_i\mapsto y_iu^{a_i}$; one can check that the following diagram is fibred:
$$\xymatrix{\cX\ar[d] \ar[r] & \cB\ar[d] \\ X'\ar[r] & \bA^n}$$
where $\cB$ is the weighted blow-up of $0\in \bA^n$ with weights ${a_1,...,a_n}$. 
So the map $\psi:X'\to \bA^n$ is flat at $\psi^{-1}(0)$, and therefore $\cX\to X'$ is a weighted blow-up from \Cref{flat_bc_wblowups}.
\end{proof}
\begin{Remark}
    One can check that the weighted blow-up of \Cref{thm_affine_contraction} is in fact a regular weighted blow-up.
\end{Remark}
\begin{Remark}\label{rmk_affine_case_is_the_same_as_Artin_case}
    With the assumptions of \Cref{thm_affine_contraction} and if $\cX$ is normal, the coarse moduli space $X$ admits the contraction $X\to Z$ of \Cref{teo_contraction_cms}. If we denote by $X'$ the coarse moduli space of the contraction $\cX'$ explicitly constructed in \Cref{thm_affine_contraction}, then $Z\cong X'$ by unicity of the contraction.\end{Remark}
\begin{Cor}\label{cor_conditions_are_satisfied_for_affine_cont_to_be_a_pushout}
    \textcolor{black}{Let $\pi:\cE\to \spec(A)$ be the map of \Cref{thm_affine_contraction}. Then:
    \begin{enumerate}
        \item $\cO_{\spec(A)}\to \pi_*\cO_\cE$ is an isomorphism,
        \item $R^1\pi_*(\cO_{\cX}(-n\cE)|_\cE) = 0$ for every $n>0$, and
        \item $ \cO_{(\cX')^{\wedge}} \cong  \varprojlim\left(p^{(k)}_*\cO_{nE}^{(k)} \right)$, where the completion is with respect to the ideal sheaf of $\spec(A)$ in $\cX'$.
    \end{enumerate}
    In particular, the diagram of \Cref{thm_affine_contraction} is a push-out in algebraic stacks.}
\end{Cor}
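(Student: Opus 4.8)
The plan is to read items (1)--(3) as precisely the three numbered hypotheses of \Cref{prop_pushout_criteria} for the square produced by \Cref{thm_affine_contraction}, so that the concluding assertion is then immediate. In that square one has $\cY=\spec(A)$, $\cZ=\cX'$, $f=\pi\colon\cE\to\spec(A)$, $g\colon\cX\to\cX'$ and $j=i\colon\spec(A)\hookrightarrow\cX'$, and the structural hypotheses of \Cref{prop_pushout_criteria} all hold: $\iota$ and $j$ are closed embeddings, $\pi$ is proper (being a weighted projective bundle), $g$ is an isomorphism over $\cX'\smallsetminus\spec(A)$ by the last sentence of \Cref{thm_affine_contraction}, and the natural closed immersion $\cE\to\spec(A)\times_{\cX'}\cX$ is a homeomorphism because, $\cX\to\cX'$ being a weighted blow-up with exceptional divisor $\cE$, the fibre product $\spec(A)\times_{\cX'}\cX$ has the same reduced structure as $\cE$.

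Items (1) and (2) are fibrewise cohomological computations on $\cE\cong\cP_A(a_1,\dots,a_n)$ of exactly the kind already carried out in \Cref{lemma:step1} and in Step 1 of the proof of \Cref{thm_affine_contraction}. For (1), cohomology and base change reduces the assertion to $\oH^0(\cP(a_1,\dots,a_n),\cO)=k$, which holds because the structure sheaf of a weighted projective stack has only constant global sections. For (2), the hypothesis $\cN_{\cE\vert\cX}\cong\cO_\cP(-1)$ gives $\cO_\cX(-n\cE)|_\cE\cong\cO_\cP(n)$, and $\oH^1(\cP(a_1,\dots,a_n),\cO(j))=0$ for every $j\geq -a_1-\cdots-a_n+1$ by \cite[Prop. 2.5]{meier2015vector}, in particular for every $j\geq 1$; cohomology and base change then yields $R^1\pi_*(\cO_\cX(-n\cE)|_\cE)=0$ for all $n\geq 1$. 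Since $\cO_\cX(-n\cE)|_\cE$ is precisely the pullback along $\iota$ of the ideal of $n\cE$ in $(n+1)\cE$, this computation is simultaneously what is required to read off hypothesis (2) of \Cref{prop_pushout_criteria}.

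Item (3) I would establish by reprising, in the purely schematic local setting at hand, the formal-functions argument from the proof of \Cref{prop:existence contraction cms}. By \Cref{thm_affine_contraction} the stack $\cX'$ is a scheme in a neighbourhood of $\spec(A)$, where it coincides with $\operatorname{Proj}\bigl(\bigoplus_d\oH^0(\cG^{\otimes d})\bigr)$ for the base-point-free line bundle $\cG$ built there, and the induced proper morphism $\rho\colon X\to\cX'$ is an isomorphism away from $E$ and satisfies $\rho_*\cO_X\cong\cO_{\cX'}$ near $\spec(A)$. Applying the theorem on formal functions \cite{EGAIII}*{4.1.5} together with \cite[Lem. 6.5]{artin1970algebraization} exactly as in \Cref{prop:existence contraction cms} then gives $\cO_{(\cX')^{\wedge}}\cong\varprojlim\bigl(p^{(k)}_*\cO_{nE}^{(k)}\bigr)$, the completion being along the ideal of $\spec(A)$; here one uses the identity $p_*\cO_{nE}\cong\cO_{nY}$, obtained by pushing the isomorphism $\pi^{(n)}_*\cO_{\cE^{(n)}}\cong\cO_{\cY^{(n)}}$ of \Cref{lemma:step1} down to coarse spaces, together with the fact that, after passing to coarse spaces, the thickenings $\{\cE^{(m)}\}_{m\geq 1}$ and the $k$-fold thickenings of the Cartier divisor $nE$ form cofinal systems. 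I expect this item to be the only genuine obstacle: the bookkeeping identifying the two families of infinitesimal thickenings, and the verification that $\rho_*\cO_X\cong\cO_{\cX'}$ holds in an honest neighbourhood of $\spec(A)$ rather than merely birationally, are exactly the points that demanded care in \Cref{prop:existence contraction cms}.

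Finally, once (1)--(3) are in hand, \Cref{prop_pushout_criteria} applies verbatim to the square of \Cref{thm_affine_contraction}; the finite-type and affine-stabilizer hypotheses underlying the appeals to \cites{mayer_viet_squares,HR19} inside its proof hold here because $\cX$ has coarse space projective over $\cB=\spec(B)$ and $\spec(A)$ is finite over $\cB$. This yields that the square is a pushout in the category of algebraic stacks, which is the ``in particular'' clause of the corollary.
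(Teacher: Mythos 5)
Your proof is correct, and for items (1), (2) and the concluding appeal to \Cref{prop_pushout_criteria} it tracks the paper's argument (the paper simply cites the weighted projective bundle structure and \cite{meier2015vector}*{Prop. 2.5}, and leaves the structural hypotheses of \Cref{prop_pushout_criteria} implicit, which you verify explicitly). Where you genuinely diverge is item (3): the paper disposes of it in one line by invoking \Cref{rmk_affine_case_is_the_same_as_Artin_case} — by uniqueness of the contraction, the coarse space of the $\cX'$ built in \Cref{thm_affine_contraction} agrees near $\spec(A)$ with the contraction $Z$ of \Cref{teo_contraction_cms}, and the completion isomorphism is then exactly item (4) of that theorem, already proved in \Cref{prop:existence contraction cms}. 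You instead rerun the formal-functions argument (\cite{EGAIII}*{4.1.5} plus \cite[Lem. 6.5]{artin1970algebraization}) directly on the explicit $\operatorname{Proj}\bigl(\bigoplus_d\oH^0(\cG^{\otimes d})\bigr)$ construction, which forces you to re-establish $\rho_*\cO_X\cong\cO_{X'}$ in an honest neighbourhood of $\spec(A)$ (available via the Stein-factorization/normality footnote in Step 2 of \Cref{thm_affine_contraction} and Zariski's main theorem) and to handle the cofinality of the two systems of thickenings yourself. Both routes are sound: the paper's is shorter and recycles \Cref{teo_contraction_cms} through the uniqueness statement (note it quietly uses normality of $\cX$ there), while yours is more self-contained locally at the cost of duplicating the computation from \Cref{prop:existence contraction cms}; the bookkeeping points you flag as the genuine obstacles are indeed exactly what the paper's citation-based route packages away.
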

\begin{proof}
    \textcolor{black}{Condition (1) follows since $\pi$ is a weighted projective bundle, condition (2) follows since $\cO_{\cX}(-n\cE)|_\cE$ restricts to $\cO_\cP(n)$ on the fibers of $\pi$, and from \cite{meier2015vector}*{Prop. 2.5}. Point (3) follows since, from \Cref{rmk_affine_case_is_the_same_as_Artin_case}, the $\cX'$ constructed in \Cref{thm_affine_contraction} agrees, in a neighbourhood of $\spec(A)$, with the one constructed using \Cref{teo_contraction_cms}}. Finally, from \Cref{prop_pushout_criteria} the diagram of \Cref{thm_affine_contraction} is a push-out in algebraic stacks.  
\end{proof}

\begin{Cor}\label{cor_contraction_is_wblowup}
    Let $\phi:\cX\to \cZ$ be a morphism of separated \textcolor{black}{tame} Deligne-Mumford stacks, with $\cX$ smooth and $\cZ$ normal, which induces a projective morphism $f:X\to Z$ on coarse moduli spaces. Assume that there is a Cartier divisor $\cE\subseteq \cX$ such that:
    \begin{enumerate}
        \item $\cE$ is a weighted projective bundle $\cP_{Y}\to {Y}$ over a smooth scheme ${Y}$, and $\cN_{\cE\vert\cX}\cong \cO_{\cP_{Y}}(-1)$
        \item There is a diagram as follows, with $Y\to \cZ$ a closed embedding
        $$\xymatrix{\cE\ar[d] \ar[r] & \cX \ar[d] \\ {Y}\ar[r] & \cZ}$$
        \item $\cX\to \cZ$ an isomorphism away from $\cE$.
    \end{enumerate}
    Then $\cZ$ is smooth, the map $\cX\to \cZ$ is a weighted blow-up, \textcolor{black}{and the diagram above is a pushout in algebraic stacks}. 
\end{Cor}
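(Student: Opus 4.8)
The plan is to reduce, étale-locally on the coarse space $Z$ of $\cZ$, to the affine situation treated in \Cref{thm_affine_contraction}, and then to identify the given stack $\cZ$ with the weighted blow-down produced there. First I would record that the hypotheses force $Z$ to be the contraction of \Cref{teo_contraction_cms}: indeed $Z$ is normal and separated, being the coarse space of the normal separated stack $\cZ$; the morphism $f\colon X\to Z$ is proper, an isomorphism away from the divisor $E$, and carries $E$ into the closed subscheme $Y$ with $Y\hookrightarrow Z$ finite; so \Cref{lemma_isom_coarse_moduli_space} identifies $Z$ with the contraction of \Cref{teo_contraction_cms}, and in particular $f_*\cO_X\cong\cO_Z$.

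Next I would localize. Fix $p\in Y$. Since the automorphism group scheme of a weighted affine space is smooth, the weighted affine bundle underlying $\cE\to Y$ trivializes after an étale cover of $Y$, and such a cover can be spread out over the closed immersion $Y\hookrightarrow Z$; combined with an affine étale neighbourhood of $p$ in $Z$, this gives an affine étale morphism $\spec(B)\to Z$ through $p$ such that, writing $\spec(A)=Y\times_Z\spec(B)$, $X'=X\times_Z\spec(B)$, $\cX'=\cX\times_Z\spec(B)$, $\cE'=\cE\times_Z\spec(B)$ and $\cZ'=\cZ\times_Z\spec(B)$, one has $\cE'\cong\cP_A(a_1,\dots,a_n)$ with $A$ finite over $B$ and $\spec(A)$ smooth (and, after shrinking further, connected). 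Then $\cX'$ is a tame stack, separated over $\spec(B)$, with coarse space $X'$ projective over $\spec(B)$, containing $\cE'$ as a Cartier divisor with $\cN_{\cE'\vert\cX'}\cong\cO_{\cP_A}(-1)$, so \Cref{thm_affine_contraction} applies (possibly after shrinking $\spec(B)$ around $p$) and produces a separated tame Deligne--Mumford stack $\cX''$ which is smooth, and even a scheme, near the closed substack $\spec(A)$, together with an identification of $\cX'$ with the weighted blow-up of $\cX''$ along $\spec(A)$ with exceptional divisor $\cE'$. By \Cref{rmk_affine_case_is_the_same_as_Artin_case} and the uniqueness of contractions, the coarse space of $\cX''$ is $\spec(B)$; and by \Cref{cor_conditions_are_satisfied_for_affine_cont_to_be_a_pushout} the square $\cE'\to\cX'$, $\spec(A)\to\cX''$ is a pushout of algebraic stacks.

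The heart of the matter is to prove $\cZ'\cong\cX''$. Both stacks receive a map from $\cX'$ (the weighted blow-down, respectively the base change of $\phi$) and a compatible map from $\spec(A)$, agreeing on $\cE'$ by hypothesis (2); so the pushout property of $\cX''$ yields a canonical morphism $g\colon\cX''\to\cZ'$. This $g$ lies over $\spec(B)$, since on coarse spaces the induced endomorphism of $\spec(B)$ restricts to the identity on the dense open $X'\smallsetminus\cE'$ (equivalently, use surjectivity of $X'\to\spec(B)$). It is representable, because $\cX''$ is a scheme near $\spec(A)$ and $g$ is an isomorphism away from the preimage of $Y$; and it is proper and quasi-finite, hence finite. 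Being moreover birational with normal target $\cZ'$, it is an isomorphism by Zariski's main theorem. Therefore $\cZ'\cong\cX''$ is smooth, $\cX'\to\cZ'$ is a weighted blow-up with exceptional divisor $\cE'$, and the square over $\spec(B)$ is a pushout.

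Finally I would globalize and note the main obstacle. Smoothness of $\cZ$ is étale-local, hence holds; that $\phi\colon\cX\to\cZ$ is a weighted blow-up follows from \Cref{prop_checking_wblowup_smooth_locally}, applied to the étale cover $\cZ\times_Z\spec(B)\to\cZ$ and the Cartier divisor $\cE$; and the pushout statement follows by checking the criteria of \Cref{prop_pushout_criteria}, which are étale-local on $\cZ$ --- conditions (1) and (2) hold because $\cE\to Y$ is a weighted projective bundle with $\cN_{\cE\vert\cX}\cong\cO(-1)$, as in \Cref{lemma:step1}, and condition (3) holds because locally $\cZ\cong\cX''$ is a scheme near $Y$, so \Cref{cor_conditions_are_satisfied_for_affine_cont_to_be_a_pushout}(3) identifies the completion of $\cO_\cZ$ along $Y$ with $\varprojlim\oH^0(\cO_{n\cE})$. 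I expect the one genuinely delicate point to be the identification $\cZ'\cong\cX''$: since $\cZ$ is only assumed normal, not smooth, the reconstruction theorem \Cref{thm_DM_stack_is_determined_by_cms_and_codim1} cannot be applied to it directly, so one must manufacture the comparison morphism $g$ from the pushout property and then verify, via the representability of $g$ near the center and the normality of $\cZ$, that Zariski's main theorem forces it to be an isomorphism.
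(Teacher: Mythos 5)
Your proposal is correct, and its skeleton matches the paper's: localize \'etale-locally, invoke \Cref{thm_affine_contraction}, conclude with Zariski's main theorem, and get the weighted-blow-up and pushout statements from \Cref{prop_checking_wblowup_smooth_locally}, \Cref{prop_pushout_criteria} and \Cref{cor_conditions_are_satisfied_for_affine_cont_to_be_a_pushout}. The one step where you genuinely diverge is the local identification of $\cZ$ with the affine blow-down. The paper's proof starts from the observation that, since $Y\to\cZ$ is a closed embedding from a scheme and triviality of stabilizers is an open condition on a separated Deligne--Mumford stack, $\cZ$ is an algebraic space in a neighbourhood of $Y$; this lets the authors replace $\cZ$ itself by an affine \'etale neighbourhood $\spec(B)$ (via \cite{EGAIV}*{Prop.\ 18.1.1}), apply \Cref{thm_affine_contraction} with base $\cB=\spec(B)$, and run Zariski's main theorem directly on the resulting projective, birational, quasi-finite map of schemes $X'\to\spec(B)$. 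You never use this observation: you keep $\cZ'=\cZ\times_Z\spec(B)$ as a stack, manufacture the comparison morphism $g\colon\cX''\to\cZ'$ from the pushout property of the affine contraction (which is legitimate, since \Cref{cor_conditions_are_satisfied_for_affine_cont_to_be_a_pushout} is about the output of \Cref{thm_affine_contraction} and is independent of the given $\cZ$), and then apply Zariski's main theorem to the representable, proper, quasi-finite, birational morphism $g$ with normal target. The extra checks this forces on you (representability of $g$ via the scheme-ness of $\cX''$ near $\spec(A)$ and the isomorphism away from $Y$; properness via the projectivity of the coarse space of $\cX''$ over $\spec(B)$; that $g$ lies over $\spec(B)$) all go through, so the route is valid --- it just trades the paper's one-line reduction to an affine scheme for a universal-property construction plus a stacky ZMT. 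Your opening paragraph identifying the coarse space $Z$ with the contraction of \Cref{teo_contraction_cms} via \Cref{lemma_isom_coarse_moduli_space} is harmless but not needed for the rest of the argument.
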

\begin{proof}

    First observe that $\cZ$ is an algebraic space around ${Y}$, as being an algebraic space is an open condition on a separated Deligne-Mumford stack. Moreover, since being a weighted blow-up can be checked smooth locally from \Cref{prop_checking_wblowup_smooth_locally}, up to replacing $\cZ$ with an \'etale neighbourhood of $y\in Y$ and using \cite{EGAIV}*{Prop. 18.1.1}, we can assume that $\cZ=\spec(B)$ is affine, and that $\cP_{Y}=\cP(a_1,...,a_n)\times {Y}$. From \Cref{thm_affine_contraction}, there is a morphism $\cX\to X'$ which is a weighted blow-up over $\spec(B)$, and such that $X'\to \spec(B)$ is projective. By point (3) it is also birational, and quasi-finite by (2) and (3) (therefore finite), so it is an isomorphism from Zariski's Main Theorem. \textcolor{black}{Then the square above is a pushout as the assumptions of \Cref{prop_pushout_criteria} are satisfied by \Cref{cor_conditions_are_satisfied_for_affine_cont_to_be_a_pushout}.}
\end{proof}

For us, it will be useful to drop the assumption in \Cref{cor_contraction_is_wblowup} that $Y$ has to be a scheme. To do so, we will use the following two results.

\begin{Prop}\label{prop_extending_action}
With the same notations and assumptions of \Cref{thm_affine_contraction}, assume that one has a finite and \'etale group $G$ \textcolor{black}{of order prime to $char(k)$}, acting on $\spec(A)$, $\cB$, $\cE$ and $\cX$ in a way such that:
\begin{enumerate}
    \item the maps $\cE\to \spec(A)$, $\cE\to \cX$, $\spec(A)\to \cB$ and $\cX\to \cB$ are equivariant, 
    \item  $\xi\in \spec(A)$ is fixed by $G$, and
    \item the action of $G$ on $\cB$ is trivial.
\end{enumerate}
Let $\cX'$ be the \textcolor{black}{tame} Deligne-Mumford stack constructed using \Cref{thm_affine_contraction}, possibly after shrinking $\cB$, $\spec(A)$, $\cE$ and $\cX$ as in \Cref{thm_affine_contraction}.
Then, there is a unique action of $G$ on $\cX'$, such that the maps $\spec(A)\to \cX'$, $\cX\to \cX'$ and $\cX'\to \cB$ are equivariant.
\end{Prop}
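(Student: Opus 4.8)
The plan is to use the uniqueness of the construction in \Cref{thm_affine_contraction} to transport the $G$-action. The key observation is that $\cX'$ was constructed via an explicit linear-algebraic recipe from $\cX$ (take $X' = \operatorname{Proj}(\bigoplus_d \oH^0(\cX, \cG^{\otimes d}))$ for a suitable line bundle $\cG$, equip it with its residual stacky structure coming from $\cX$), and this recipe is functorial for isomorphisms of the input data. So first I would observe that for each $g \in G$, the automorphism $g\colon \cX \to \cX$ carries $\cE$ to $\cE$ (equivariantly over $\cB$, and fixing $\xi$), and carries the line bundle $\cO_\cX(\cE)$ to itself; hence running the construction of \Cref{thm_affine_contraction} on the input $(g^*\cX, g^*\cE)$ produces the same stack $\cX'$, but the uniqueness clause of \Cref{thm_affine_contraction} (which follows from the uniqueness in \Cref{teo_contraction_cms} via \Cref{lemma_isom_coarse_moduli_space}, together with the fact that the residual stacky structure near $\spec(A)$ is determined by $\cX$) yields a canonical isomorphism $\rho(g)\colon \cX' \xrightarrow{\sim} \cX'$ making the square with $g\colon \cX\to\cX$ commute, compatibly with the maps to $\cB$ and the closed immersion $i\colon \spec(A)\to\cX'$.

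**Next** I would upgrade this assignment $g \mapsto \rho(g)$ to a genuine group action, i.e. check the cocycle condition $\rho(gh) = \rho(g)\rho(h)$ up to coherent $2$-isomorphism. Here is where the uniqueness statement does the real work: both $\rho(gh)$ and $\rho(g)\rho(h)$ are isomorphisms $\cX' \to \cX'$ over $\cB$ that are compatible with $i$ and with the blow-down $\cX \to \cX'$ twisted by $gh$; since the contraction and its identifications are unique \emph{up to unique isomorphism} (the "$2$-isomorphisms are unique" type statement — compare the role of \Cref{lemma_isom_coarse_moduli_space} and the normality of $Z$, which forces birational finite maps with normal target to be unique isomorphisms), there is a unique $2$-isomorphism between them, and these assemble into the associativity constraint automatically. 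I would spell this out by noting that an isomorphism of $\cX'$ over $\cB$ commuting with $i$ and restricting to the identity away from a closed substack of codimension $\ge 2$ (or away from $\spec(A)$, using that $\cX\to\cX'$ is an isomorphism there) must be the identity, which pins down both the $1$-morphisms and the $2$-morphisms uniquely. The same rigidity gives uniqueness of the action, as asserted.

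**The main obstacle** I anticipate is bookkeeping the $2$-categorical coherence: strictly speaking a "$G$-action on a stack" is a pseudo-functor $BG \to (\text{Stacks})$, so one must produce not just the isomorphisms $\rho(g)$ but the coherence $2$-isomorphisms and verify the pentagon/unit axioms. The cleanest way to sidestep most of this is to invoke the rigidity just described: because the automorphism group of the object $(\cX' \to \cB, i, \cX \to \cX')$ — i.e. self-isomorphisms of $\cX'$ compatible with all the structure — is trivial (again by Zariski's main theorem / normality, exactly as in \Cref{lemma_isom_coarse_moduli_space}), the pseudo-functor is forced to be strictifiable and all higher coherences are automatic and unique. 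Concretely I would phrase it as: the groupoid of "contractions of $\cX$ along $\cE$ with the given properties" is a \emph{contractible} groupoid (equivalent to a point), $G$ acts on the source data, hence acts on this contractible groupoid, hence on its unique-up-to-unique-isomorphism object $\cX'$; and a group acting on a contractible groupoid yields a genuine (and essentially unique) action on the object. The remaining verifications — that $\spec(A)\to\cX'$, $\cX\to\cX'$, $\cX'\to\cB$ become equivariant — are then immediate from the construction of the $\rho(g)$, since each $\rho(g)$ was defined precisely to make the relevant squares commute.

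**Finally**, to make sure the hypotheses of \Cref{thm_affine_contraction} are preserved under the shrinking, I would choose the open $\cU \subseteq \cB$ over which the construction is carried out to be $G$-invariant, which is possible because $G$ acts trivially on $\cB$ (hypothesis (3)); thus no compatibility is lost when passing to $\spec(A)\times_\cB\cU$, $\cE\times_\cB\cU$, $\cX\times_\cB\cU$, and the group acts on all of these by hypothesis (1). The tameness and separatedness of $\cX'$ needed to even speak of a well-behaved action are already part of the output of \Cref{thm_affine_contraction}, and the order of $G$ being prime to $\operatorname{char}(k)$ guarantees $[\cX'/G]$ stays tame and Deligne–Mumford, which is implicit in calling this "an action of $G$" in the relevant sense.
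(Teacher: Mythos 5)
Your route is genuinely different from the paper's, and as written it has gaps. The paper's proof is a one-liner: by \Cref{cor_conditions_are_satisfied_for_affine_cont_to_be_a_pushout} (via \Cref{prop_pushout_criteria}) the square of \Cref{thm_affine_contraction} is a \emph{pushout in algebraic stacks}, so for each $g\in G$ the pair of maps $\cX\xrightarrow{g}\cX\to\cX'$ and $\spec(A)\xrightarrow{g}\spec(A)\to\cX'$, which are compatible on $\cE$, induce a map $\rho(g):\cX'\to\cX'$ that is unique up to unique $2$-isomorphism; the cocycle coherences, the equivariance of all three maps, and the uniqueness of the action then all come for free from the universal property. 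Your proposal instead rests on a ``uniqueness clause of \Cref{thm_affine_contraction}'' that does not exist: \Cref{thm_affine_contraction} asserts only existence, and the stack-level uniqueness you need is essentially the last bullet of \Cref{cor_blowdown_Y_is_specA_divided_by_G}, which is proved via \Cref{thm_DM_stack_is_determined_by_cms_and_codim1} and is explicitly restricted to $n>1$ (the codimension-$\geq 2$ hypothesis fails when $n=1$, i.e.\ when $\cE$ is a gerbe over $\spec(A)$ and $\spec(A)$ is a divisor in $\cX'$). Since \Cref{prop_extending_action} is stated under the hypotheses of \Cref{thm_affine_contraction}, which include $n=1$, your argument does not cover all the cases the proposition needs.

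There is a second, more structural gap: the rigidity you invoke to kill the $2$-categorical coherence issues (``automorphisms of $(\cX'\to\cB, i, \cX\to\cX')$ are trivial, by Zariski's main theorem / normality, exactly as in \Cref{lemma_isom_coarse_moduli_space}'') is a statement about \emph{algebraic spaces}; \Cref{lemma_isom_coarse_moduli_space} only pins down the coarse space and says nothing about $1$- or $2$-automorphisms of the stack $\cX'$. To make your contractible-groupoid argument honest you would need the Deligne--Mumford-stack statements that a morphism from a normal separated stack is determined by its restriction to a dense open together with uniqueness of the identifying $2$-isomorphism (the ingredients behind \Cref{uniquenessGerbe} and the use of \cite[Lem.~7.2]{DH18} in \Cref{lemma_extension_on_open_and_etale_neigh_of_complement}), and in addition the equivariance of $i:\spec(A)\to\cX'$ requires descending an identification along the weighted projective bundle $\cE\to\spec(A)$, which is exactly where a \Cref{lem:very-weak-rig}-type monomorphism argument is needed and is currently glossed over. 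None of these points is fatal for $n>1$ --- your strategy can be completed with those inputs --- but the pushout argument the paper uses is both shorter and uniform in $n$, and it is the reason \Cref{cor_conditions_are_satisfied_for_affine_cont_to_be_a_pushout} is placed immediately before this proposition.
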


\begin{proof}\textcolor{black}{This follows from the universal property of pushouts.}
\end{proof}

\begin{Cor}\label{cor_blowdown_Y_is_specA_divided_by_G}
    With the assumptions and notations of \Cref{prop_extending_action}, possibly after shrinking $\cB$, $\spec(A)$, $\cE$ and $\cX$ as in \Cref{thm_affine_contraction}, there is a normal \textcolor{black}{tame} Deligne-Mumford stack $\cX'$, separated over $\cB$, which fits in a diagram below
    $$\xymatrix{[\cP_A(a_1,...,a_n)/G]\ar[d]^{\pi_\cE} \ar[r] & [\cX/G]\ar[d]^\pi \\ [\spec(A)/G] \ar[r]^-i & \cX'}$$
    such that:
    \begin{enumerate}
    \item $i$ is a closed embedding
        \item the morphism $\pi$ is separated, and on the level of coarse moduli spaces it is projective,
        \item $\pi$ is an isomorphism away from $[\cP_A(a_1,...,a_n)/G]$, and its restriction to $[\cP_A(a_1,...,a_n)/G]$ agrees with $\pi_\cE$.
    \end{enumerate}
    Moreover,\begin{itemize}
        \item $\pi$ is a weighted blow-up, 
        \item $\cX'$ is isomorphic to $[\cZ/G]$ for a smooth \textcolor{black}{tame} Deligne-Mumford stack $\cZ$ (so $\cX'$ is smooth),
        
        \item  when $n>1$, the stack $\cX'$ is uniquely determined from $[\cX/G]$, $[\cP_A(a_1,...,a_n)/G]$ and $[\spec(A)/G]$ and the maps between them.
    \end{itemize} 
\end{Cor}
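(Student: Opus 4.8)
The plan is to take for $\cX'$ the quotient stack $[\cW/G]$, where $\cW$ denotes the separated tame Deligne--Mumford stack called $\cX'$ in \Cref{thm_affine_contraction}, endowed with the $G$-action furnished by \Cref{prop_extending_action}, and to deduce each assertion from the corresponding property of the maps $\spec(A)\hookrightarrow\cW$, $\cE\hookrightarrow\cX$ and $\cX\to\cW$. The formal mechanism is that for a strict action of a finite group $G$ on stacks and a $G$-equivariant morphism $\cA\to\cB$, the square
$$\begin{tikzcd}\cA\arrow[r]\arrow[d] & {[\cA/G]}\arrow[d]\\ \cB\arrow[r] & {[\cB/G]}\end{tikzcd}$$
is cartesian, and $\cB\to[\cB/G]$ is a $G$-torsor, hence finite étale (recall $G$ is finite étale of order prime to $\mathrm{char}(k)$). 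Therefore any property of morphisms that is fppf-local on the target and stable under base change --- separatedness, being a closed immersion, being an isomorphism --- transfers from $\cA\to\cB$ to $[\cA/G]\to[\cB/G]$; moreover quotients by a finite tame group preserve ``separated tame Deligne--Mumford'', and, the quotient being finite-étale covered by $\cW$, normality. Applying $[-/G]$ to the square of \Cref{thm_affine_contraction} then produces a normal, separated, tame Deligne--Mumford stack $\cX':=[\cW/G]$ (with a separated morphism to $\cB$, as $G$ acts trivially on $\cB$), with $i\colon[\spec(A)/G]\to\cX'$ a closed immersion, $\pi\colon[\cX/G]\to\cX'$ separated, $\pi$ restricting on $[\cP_A(a_1,\dots,a_n)/G]=[\cE/G]$ to $\pi_\cE$, and $\pi$ an isomorphism away from $[\cP_A(a_1,\dots,a_n)/G]$ (since $\cX\to\cW$ is a $G$-equivariant isomorphism off $\cE$). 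Projectivity of $\pi$ on coarse moduli spaces follows because the coarse spaces of $[\cX/G]$ and $\cX'$ are the finite quotients $X/G$ and $X'/G$ --- which are schemes, $X$ and $X'$ being projective over the affine scheme $\cB$ --- and $X/G\to X'/G$ is proper and quasi-projective, hence projective.

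Next, to see that $\pi$ is a weighted blow-up I would apply \Cref{prop_checking_wblowup_smooth_locally} with the smooth cover $\cW\to[\cW/G]=\cX'$: by the cartesian square above this pulls $\pi$ back to $\cX\to\cW$, which is a weighted blow-up by \Cref{thm_affine_contraction}\,(3), and the $G$-equivariant Cartier divisor $\cE\subset\cX$ descends to a Cartier divisor $[\cE/G]=[\cP_A(a_1,\dots,a_n)/G]\subset[\cX/G]$ whose pullback to $\cW$ is the exceptional divisor of $\cX\to\cW$; thus the hypotheses of \Cref{prop_checking_wblowup_smooth_locally} are met and $\pi$ is a weighted blow-up. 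Taking $\cZ:=\cW$ --- smooth, by \Cref{thm_affine_contraction}, in a neighbourhood of $[\spec(A)/G]$, and everywhere under the standing hypothesis that $\cX$ is smooth (as $\cX\to\cW$ is an isomorphism off $\cE$) --- exhibits $\cX'=[\cZ/G]$ with $\cZ$ a smooth tame Deligne--Mumford stack; and since $\cW\to\cX'$ is smooth and surjective, $\cX'$ is itself smooth.

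For the uniqueness clause (when $n>1$): by \Cref{cor_conditions_are_satisfied_for_affine_cont_to_be_a_pushout} the square $\cE\to\cX$, $\cE\to\spec(A)$, $\spec(A)\to\cW$, $\cX\to\cW$ of \Cref{thm_affine_contraction} is a pushout of algebraic stacks --- and this is precisely where the positive-dimensionality of the fibers, equivalently $n>1$, enters, via \Cref{teo_contraction_cms}. Since $[-/G]$ commutes with colimits (for a stack $\cS$ one has $\Hom([\cC/G],\cS)\cong\Hom(\cC,\cS)^{hG}$, and homotopy $G$-fixed points preserve limits), applying it to this pushout shows that the square $[\cP_A(a_1,\dots,a_n)/G]\to[\cX/G]$, $[\cP_A(a_1,\dots,a_n)/G]\to[\spec(A)/G]$, $[\spec(A)/G]\to\cX'$, $[\cX/G]\to\cX'$ is again a pushout, so $\cX'$ together with $i$ and $\pi$ is determined up to unique isomorphism by $[\cX/G]$, $[\cP_A(a_1,\dots,a_n)/G]$, $[\spec(A)/G]$ and the maps among them. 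The step I expect to be the main obstacle is the weighted-blow-up claim --- checking that the descended divisor $[\cP_A(a_1,\dots,a_n)/G]$ is exactly the datum that \Cref{prop_checking_wblowup_smooth_locally} requires and that it restricts correctly along $\cW\to\cX'$ --- together with the related point, underpinning the uniqueness clause, that the formation of quotient stacks is compatible with pushouts of algebraic stacks.
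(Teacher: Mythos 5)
Your existence argument and your treatment of the ``moreover'' bullets follow the paper's route: the paper also takes $\cX'=[\cZ/G]$ for the $G$-equivariant contraction $\cZ$ of \Cref{thm_affine_contraction} (your $\cW$) with the action from \Cref{prop_extending_action}, deduces separatedness and the diagram properties from finite quotients/descent, and verifies smoothness and the weighted blow-up property locally via \Cref{prop_checking_wblowup_smooth_locally}. The only cosmetic difference is that the paper passes to an \'etale atlas of $\cX'$ and cites \Cref{cor_contraction_is_wblowup}, whereas you pull back along the torsor $\cW\to[\cW/G]$ and reuse \Cref{thm_affine_contraction}(3) directly; both are fine (and both share the same implicit use of smoothness of $\cX$ away from $\cE$, which is not literally among the hypotheses of \Cref{thm_affine_contraction} but holds near $\cE$ by \Cref{rmk_sm_cartier_on_smooth} and in the applications).

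For the uniqueness clause you genuinely diverge, and here there is a gap relative to what the paper proves and later uses. The paper shows that \emph{any} normal, separated $\cX''$ satisfying (1)--(3) is isomorphic to the constructed one, by matching coarse moduli spaces via \Cref{lemma_isom_coarse_moduli_space} and codimension-one stabilizers, and then invoking the reconstruction result \Cref{thm_DM_stack_is_determined_by_cms_and_codim1}; this is exactly the form of uniqueness (``determined by the coarse space and $\cX\smallsetminus\cE$'') that is invoked in the proof of \Cref{theo:main}. Your argument instead shows that the constructed $[\cW/G]$ is the pushout of $[\spec(A)/G]\leftarrow[\cP_A(a_1,\dots,a_n)/G]\rightarrow[\cX/G]$, by combining \Cref{cor_conditions_are_satisfied_for_affine_cont_to_be_a_pushout} with the (correct, if somewhat sketched) fact that $[-/G]$ commutes with such 2-pushouts. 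This pins down the pushout up to canonical equivalence, but it does not by itself prove the paper's statement: given another candidate $\cX''$ satisfying (1)--(3), the universal property only produces a comparison morphism $[\cW/G]\to\cX''$, and you still have to show this morphism is an isomorphism (e.g.\ by Zariski's main theorem on coarse spaces together with the codimension-one stabilizer comparison, i.e.\ essentially the paper's argument). You correctly note that $n>1$ enters through \Cref{cor_conditions_are_satisfied_for_affine_cont_to_be_a_pushout} (via \Cref{teo_contraction_cms}), but as written your uniqueness step should either be completed along the lines above or be explicitly weakened to the ``pushout'' formulation, which is not what the subsequent proof of \Cref{theo:main} consumes.
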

\begin{Remark}
    One can check that the weighted blow-up of \Cref{cor_blowdown_Y_is_specA_divided_by_G} is a \textit{regular} weighted blow-up.
\end{Remark}
\begin{proof}
    \textit{Existence.} If we consider the diagram above on the level of $G$-torsors, we obtain the diagram of \Cref{prop_extending_action}. From \Cref{thm_affine_contraction}, we can construct a Deligne-Mumford stack $\cZ$ which fits in the corresponding diagram. The action of $G$ extends from \Cref{prop_extending_action}, so we have a diagram as the one above by taking the quotient. The resulting stacks are separated as they are the quotient of separated Deligne-Mumford stacks by a finite group.
    %\Gio{details}\Mich{I think it should work like this: let $\cX \rightarrow \cY$ be a $G$-torsor over some base $S$ such that $\cX$ is separated over $S$. It is enough to prove that the morphism $\cY \rightarrow B_S G$ induced by the $G$-torsor is separated, as $B_S G$ is separated over $S$ because $G$ is finite. Being separated is a fpqc-local condition on the target, therefore it is enough to check that $\cX \simeq \cY \times_{B_S G} S \rightarrow S$ is separated, which is true by hypothesis. In fact, probably the same is true if we assume $\cX \rightarrow \cY$ to be proper and surjective}

    \textit{Moreover part.} Let $\cX'$ be such a Deligne-Mumford stack. We first check that it is smooth. To do so, using \Cref{prop_checking_wblowup_smooth_locally}, we replace $\cX'$ with an \'etale atlas, so we can assume that $\cX'$ is a scheme. Then from \Cref{cor_contraction_is_wblowup}, $\cX'$ is smooth and a weighted blow-up. By construction, it is a quotient stack. We check now that it is unique.
    
From \Cref{thm_DM_stack_is_determined_by_cms_and_codim1}, it suffices to check that two such $\cX'$ have isomorphic coarse moduli space, and that they have isomorphic stabilizers for the points of codimension one. It is immediate to show that they have isomorphic stabilizers for the points of codimension one as all the codimension one points of $\cX'$ are a subset of the codimension one points of $\cX$. Similarly, the coarse space of $\cX'$ is uniquely determined from \Cref{lemma_isom_coarse_moduli_space}.
\end{proof}

\section{Proof of \Cref{Thm_intro}}\label{section_pf_thm}

We state our main theorem. 

\begin{Teo}\label{theo:main} 
    Let $\cY$ be a smooth and separated \textcolor{black}{tame} Deligne-Mumford stack  and let $\pi:\cE \rightarrow \cY$ be a weighted projective bundle with positive dimensional fibers. Suppose we are given a (regular) closed embedding $\cE \hookrightarrow \cX$ of codimension $1$ (i.e. $\cE$ is a Cartier divisor in $\cX$) into a smooth and separated \textcolor{black}{tame} Deligne-Mumford stack $\cX$. Furthermore, assume that the normal bundle $\cN_{\cE\vert\cX}$ is isomorphic to $\cO_{\cE}(-1)\otimes \pi^*\cL$ for a line bundle $\cL$ on $\cY$. 
    Then there exists a diagram, which is a pushout in algebraic stacks,
    \begin{equation} \label{eqn:general}
    \begin{tikzcd}
    \cE \arrow[d] \arrow[r, hook] & \cX \arrow[d, dashed] \\
    \cY \arrow[r, dashed, hook]           & \cZ          
    \end{tikzcd}
    \end{equation}
    such that $\cY\hookrightarrow \cZ$ is a closed embedding and $\cZ$ is a smooth separated tame Deligne-Mumford stack. Moreover, $\cX \rightarrow \cZ$ is a weighted blowup of $\cY$ in $\cZ$.
    \end{Teo}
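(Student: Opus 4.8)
The plan is to first contract on coarse moduli spaces, then reconstruct the stacky structure étale-locally from the explicit blow-down models of \Cref{section_local_analisis}, and finally glue via the reconstruction theorem. \textbf{Step 1: the coarse contraction.} Our hypotheses are exactly those of \Cref{teo_contraction_cms}, which produces a separated normal algebraic space $Z$ with a projective morphism $\rho\colon X\to Z$ that is an isomorphism over $X\smallsetminus E$, satisfies $\rho_*\cO_X\cong\cO_Z$, and has $Y\hookrightarrow Z$ a closed embedding. Since $\pi\colon\cE\to\cY$ has positive-dimensional fibers and $\cE$ is a Cartier divisor in $\cX$, one has $\dim\cY\le\dim\cX-2=\dim\cZ-2$, so $Y\subset Z$ has codimension $\ge 2$; hence $U:=Z\smallsetminus Y=X\smallsetminus E$ is dense with complement of codimension $\ge 2$, and $\cU:=\cX\smallsetminus\cE$ is a smooth tame Deligne--Mumford stack with coarse space $U$.

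\textbf{Step 2: local blow-down models.} Fix $z\in Y\subset Z$ and a lift $p\in\cY$ with tame stabilizer $G$. By the étale-local structure of tame Deligne--Mumford stacks, after a pointed étale shrinking we may assume $\cY=[\spec(A)/G]$ with atlas $\spec(A)$ and a $G$-fixed point $\xi$ over $p$; arguing as in the proof of \Cref{cor_contraction_is_wblowup}, after a further étale shrinking of $Z$ and a $G$-equivariant Zariski shrinking of $\spec(A)$ we may assume $\cE=[\cP_A(a_1,\dots,a_n)/G]$ over $\cY$ with $\cN_{\cE\vert\cX}\cong\cO_{\cP}(-1)$ (the twist $\pi^*\cL$ being trivialized), everything lying over an affine open $\cB\subset Z$ with $X\to\cB$ projective and $\spec(A)\to\cB$ finite. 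This is precisely the input of \Cref{cor_blowdown_Y_is_specA_divided_by_G}, which yields a smooth tame separated Deligne--Mumford stack $\cZ_z$, projective on coarse spaces over $\cB$, exhibiting $\cX$ as a (regular) weighted blow-up of the closed substack $\cY\hookrightarrow\cZ_z$ with exceptional divisor $\cE$; by \Cref{cor_conditions_are_satisfied_for_affine_cont_to_be_a_pushout} and \Cref{prop_pushout_criteria} that local square is moreover a pushout. By \Cref{rmk_affine_case_is_the_same_as_Artin_case} the coarse space of $\cZ_z$ is canonically the corresponding étale chart of $Z$, and $\cZ_z$ restricts to $\cU$ over $U$.

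\textbf{Step 3: gluing.} The existence of the smooth models $\cZ_z$ together with \Cref{rem:etloccheck} shows that the hypotheses of \Cref{thm_DM_stack_is_determined_by_cms_and_codim1} — finite tame quotient singularities for $Z$ and for the relevant root stack, and extendability of the ramification data of $\cU\to U^{\mathrm{can}}$ — hold étale-locally on $Z$, hence (these conditions being étale-local) globally. That theorem, applied on étale charts and glued, then produces a \emph{unique} smooth tame separated Deligne--Mumford stack $\cZ$ with coarse space $Z$ and $\cZ|_U\cong\cU$. By uniqueness $\cZ$ restricts to $\cZ_z$ over each chart of Step~2, so the local closed embeddings and the local maps $\cX\to\cZ_z$ descend to a global closed embedding $\cY\hookrightarrow\cZ$ and a global $\cX\to\cZ$ completing \eqref{eqn:general}. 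That $\cX\to\cZ$ is a weighted blow-up of $\cY$ follows from \Cref{prop_checking_wblowup_smooth_locally}, since it is one smooth-locally on $\cZ$ and $\cE$ is the global Cartier divisor restricting to the exceptional divisor. Finally, \eqref{eqn:general} is a pushout of algebraic stacks: given $\cS$ with compatible maps from $\cX$ and $\cY$ agreeing over $\cE$, one produces $\cZ\to\cS$ by étale descent from the local pushouts of Step~2 and glues the two resulting $2$-isomorphisms by the sheaf property of $\uIsom$, just as in the last paragraph of the proof of \Cref{prop_pushout_criteria}.

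\textbf{Main obstacle.} The crux is the gluing of Step~3: the étale-local blow-downs $\cZ_z$ carry no a priori gluing data, so one needs a rigidity statement for the contracted stack, which is exactly what \Cref{thm_DM_stack_is_determined_by_cms_and_codim1} provides (its proof resting on \Cref{lemma_isom_coarse_moduli_space}, the gerbe extension and uniqueness lemmas, and Zariski's main theorem). The subtle point is that the hypotheses of that theorem are not directly verifiable on $Z$ itself; they must be traded, via \Cref{rem:etloccheck}, for the explicit smooth models of \Cref{cor_blowdown_Y_is_specA_divided_by_G}, so that it is really the entire local analysis of \Cref{section_local_analisis} that makes the gluing possible.
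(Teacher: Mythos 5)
Your overall architecture (coarse contraction, local quotient models, gluing via \Cref{thm_DM_stack_is_determined_by_cms_and_codim1}) is the same as the paper's, but Step~2 has a genuine gap: \Cref{cor_blowdown_Y_is_specA_divided_by_G} does not take as input merely $\cY=[\spec(A)/G]$ and $\cE=[\cP_A(a_1,\dots,a_n)/G]$ over an \'etale chart of $Z$; via \Cref{prop_extending_action} and \Cref{thm_affine_contraction} it requires the relevant piece of $\cX$ itself to be presented as a quotient $[\cW/G]$, with $\cW$ a $G$-equivariant stack containing $\cP_A(a_1,\dots,a_n)$ as the pullback of $\cE$ along the torsor. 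Your appeal to the \'etale-local structure of tame stacks and to ``arguing as in the proof of \Cref{cor_contraction_is_wblowup}'' does not produce this: the local structure theorem gives quotient charts around \emph{points}, whereas here you need a chart around an entire positive-dimensional fiber of $\cE\to\cY$, compatible with the chosen presentation of $\cE$; and \Cref{cor_contraction_is_wblowup} is proved only when the center is a scheme, precisely because then no torsor has to be extended. The missing step is the extension of the $G$-torsor $\cP_A(a_1,\dots,a_n)\to\cE$ to a $G$-torsor on $\cX\times_Z U_Z$ for some \'etale neighbourhood $U_Z$ of the point of $Y$ in $Z$. In the paper this is \Cref{prop_reduction_to_quotient}, whose proof is nontrivial: coherent completeness of the formal neighbourhood of $\cE$ in $\cX$, Tannaka duality to extend the torsor to the completion (using that torsors are insensitive to nilpotent thickenings, \Cref{lemma_extending_torsor_nonred_structure}), and then Artin approximation over the henselization to algebraize and spread out. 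Nothing in your proposal substitutes for this, so the claim ``this is precisely the input of \Cref{cor_blowdown_Y_is_specA_divided_by_G}'' is unjustified as stated.

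A secondary gap is in Step~3: the locally constructed maps do not come with descent data, so ``the local closed embeddings \dots descend to a global closed embedding $\cY\hookrightarrow\cZ$'' needs an argument. For $\cX\to\cZ$ one can indeed glue, but only because the map is already defined on the dense open $\cX\smallsetminus\cE$ and extensions to a normal separated source are unique (this is \Cref{lemma_extension_on_open_and_etale_neigh_of_complement}); for $\cY\to\cZ$ there is no dense open on which the map is globally given, and the paper instead uses the composite $\cE\hookrightarrow\cX\to\cZ$ together with the fact that precomposition with $\pi$ is a monomorphism of Hom-stacks because $\pi_*\cO_\cE\cong\cO_\cY$ (\Cref{lem:very-weak-rig}), reducing the existence of the lift to an \'etale-local statement. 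Your proposal should either reproduce this rigidification argument or supply explicit compatibility $2$-isomorphisms on overlaps; invoking the sheaf property of isomorphisms only at the pushout stage does not cover the construction of $\cY\hookrightarrow\cZ$.
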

    \begin{Remark}
    One can check that the weighted blow-up of \Cref{theo:main} is a \textit{regular} weighted blow-up.
\end{Remark}
    
We outline the proof. Observe that \Cref{teo_contraction_cms} implies we have a normal separated scheme $Z$ and a contraction $X\to Z$, where $X$ is the coarse moduli space of $\cX$. Thus, we have a diagram:\begin{center}\begin{equation}
\label{eqn:cmsextend}
    \begin{tikzcd}
    \cX \smallsetminus \cE \arrow[d] &  \\
    Z \smallsetminus Y \arrow[r, hook, "\mathrm{open}"]   & Z          \end{tikzcd}
    \end{equation}
    \end{center}
where $Y$ is the coarse moduli space of $\cY$ and the vertical map is a coarse moduli space. We will find a smooth, separated, and tame Deligne-Mumford stack $\cZ$ fitting into the diagram above having $Z$ as its coarse space by applying Theorem \ref{thm_DM_stack_is_determined_by_cms_and_codim1} to the diagram (\ref{eqn:cmsextend}). We will then leverage this to construct the stacky contraction $\cX\to\cZ$. The key difficulty in applying Theorem \ref{thm_DM_stack_is_determined_by_cms_and_codim1} is showing (\ref{eqn:cmsextend}) satisfies the local conditions of Remark \ref{rem:etloccheck} and we do this by using \Cref{cor_blowdown_Y_is_specA_divided_by_G}.

%especially the uniqueness part. In particular, after replacing $Z$ with an appropriate \'etale atlas $U_Z$, we will construct a separated, \textit{smooth} Deligne-Mumford stack $\cX'$ over $U_Z$, using \Cref{cor_blowdown_Y_is_specA_divided_by_G}.
%We will show that the coarse space of $\cX'$ is $U_Z$ using Zariski's main theorem and so $U_Z$ will have finite quotient singularities as it is the coarse moduli space of a smooth Deligne-Mumford stack. 

To prove that the resulting $\cZ$ fits into a diagram as in \Cref{theo:main}, we apply \Cref{cor_blowdown_Y_is_specA_divided_by_G} again. To prove that $\cZ$ is smooth and $\cX\to \cZ$ is a weighted blow-up, we can work \'etale locally over $Z$ by \Cref{prop_checking_wblowup_smooth_locally}, so we replace $Z$ with an \'etale cover. By the uniqueness part of \Cref{thm_DM_stack_is_determined_by_cms_and_codim1}, $\cX'=\cZ\times_Z U_Z$, so $\cZ\times_Z U_Z$ will be smooth, so $\cZ$ will be smooth as well, and the map will be a weighted blow-up.
Finally, the uniqueness part follows from the uniqueness part of \Cref{teo_contraction_cms} together with \Cref{thm_DM_stack_is_determined_by_cms_and_codim1}.

The following proposition allows us to work locally, and it will make \Cref{cor_blowdown_Y_is_specA_divided_by_G} easier to use.

\begin{Prop}\label{prop_reduction_to_quotient}
    With the notations of \Cref{theo:main}, let $y\in Y$. There is an \'etale neighbourhood $y \in U_Z \to Z$ such that:
    \begin{enumerate}
        \item $U_Z\times_Z \cY\cong [\spec(A)/G]$, for a finite group $G$ \textcolor{black}{of order prime to $char(k)$}
        \item $U_Z\times_Z\cX=[\cW/G]$ where $\cW$ is a smooth \textcolor{black}{tame} Deligne-Mumford stack, which is generically a scheme, and
        \item $U_Z\times_Z\cE\cong [\cP_A(a_1,...,a_n)/G]$, and $[\cP_A(a_1,...,a_n)/G]\times_{[\spec(A)/G]}\spec(A)\cong \cP_A(a_1,...,a_n)$. 
    \end{enumerate}
We summarize the situation in the following diagram:

$$
\xymatrix{\cP_A(a_1,...,a_n)\ar[dr]\ar[d] \ar[r] & \cW\ar[dr] & & &\\
           \spec(A)\ar[dr] & [\cP_A(a_1,...,a_n)/G] \ar[d] \ar[r] & [\cW/G]=\cX\times_ZU_Z\ar[r] \ar[d]& \cX\ar[d]&\\
           & [\spec(A)/G]\ar[r] \ar[d] & U_Z\ar[r]^{\text{\'etale}} \ar[d]_{\text{\'etale}} & Z&\\
           & \cY\ar[r] & Z&&}
$$
\end{Prop}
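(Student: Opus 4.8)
The plan is to propagate the local structure of $\cY$ first across the closed immersion $Y\hookrightarrow Z$ and then to $\cX$. Since $\cY$ is a smooth, separated, tame Deligne--Mumford stack, the local structure theorem for tame stacks \cite{AOV_tame} provides an \'etale neighbourhood $U_Y\to Y$ of $y$ with $\cY\times_Y U_Y\cong[\spec(A)/G]$, where $G$ is the stabilizer of the point of $\cY$ over $y$; tameness forces $|G|$ prime to $\mathrm{char}(k)$, and the chart can be taken so that the preimage $\xi\in\spec(A)$ of $y$ is $G$-fixed. Writing $\cE=\cP(\cN)$ for a weighted affine bundle $\cN$ on $\cY$, I would then trivialise $\cN_A:=\cN\times_\cY\spec(A)$ near $\xi$: the automorphism sheaf of the model weighted affine bundle is an iterated extension of the linear part $\prod_d\GL_{r_d}$, acting on the weight--graded pieces, by unipotent groups built from coherent sheaves, so by d\'evissage a torsor under it is Zariski-locally trivial around a point (vector bundles are free near a point, and torsors under the additive group of a coherent sheaf over an affine scheme vanish). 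Replacing $\spec(A)$ by the intersection of the $G$-translates of such a neighbourhood of $\xi$, and $U_Y$ by the corresponding open, we obtain $\cE\times_\cY\spec(A)\cong\cP_A(a_1,\dots,a_n)$ with the $a_i$ the (locally constant) weights, hence $\cE\times_Y U_Y=\cE\times_\cY[\spec(A)/G]\cong[\cP_A(a_1,\dots,a_n)/G]$; the last isomorphism of item (3) is then automatic, since $\cP_A\to[\cP_A/G]$ is the base change of the $G$-torsor $\spec(A)\to[\spec(A)/G]$ along $[\cP_A/G]\to[\spec(A)/G]$.

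Next I would extend this chart across the closed immersion $Y\hookrightarrow Z$ of \Cref{teo_contraction_cms}. Because $Z$ is a normal (hence excellent) separated algebraic space and $U_Y\to Y$ is \'etale, there is an \'etale neighbourhood $U_Z\to Z$ of $y$ with $U_Z\times_Z Y\cong U_Y$: one lifts a standard-\'etale presentation of $U_Y$ over $Y$ to $Z$, or, equivalently, factors the unramified morphism $U_Y\to Z$ \'etale-locally as a closed immersion into an \'etale $Z$-scheme and compares it with $U_Z\times_Z Y$, shrinking if necessary. Setting $U:=X\times_Z U_Z$ we have $\cX\times_Z U_Z\cong\cX\times_X U$, again a smooth, separated, tame Deligne--Mumford stack with coarse space $U$; and since the composite $\cE\to X\to Z$ factors through $Y\hookrightarrow Z$, the base changes of $\cE$ and $\cY$ along $U_Z\to Z$ are $\cE\times_Y U_Y\cong[\cP_A(a_1,\dots,a_n)/G]$ and $\cY\times_Y U_Y\cong[\spec(A)/G]$ respectively.

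The heart of the matter is to realise $\cX\times_Z U_Z$ as $[\cW/G]$ with $\cW$ smooth, tame, and generically a scheme, i.e.\ to extend the tautological $G$-torsor $\cP_A\to[\cP_A/G]$ from $\cE_{U_Z}:=\cE\times_Z U_Z$ to a $G$-torsor $\cW\to\cX\times_Z U_Z$. A $G$-torsor is a finite \'etale cover with compatible $G$-action, so by topological invariance of the \'etale site it extends uniquely and compatibly over all infinitesimal thickenings of $\cE_{U_Z}$ in $\cX\times_Z U_Z$, hence over the formal completion. As $X\to Z$ is projective by \Cref{teo_contraction_cms}, the morphism $\cX\times_Z U_Z\to U_Z$ is proper, so Grothendieck's existence theorem over the completion $\widehat{U_Z}$ of $U_Z$ along $U_Y$ promotes this formal $G$-cover to an honest finite \'etale $G$-cover over $\cX\times_Z\widehat{U_Z}$, and Artin approximation over the excellent henselian pair $(U_Z,U_Y)$ then descends it, after replacing $U_Z$ by a further \'etale neighbourhood of $y$, to a finite \'etale $G$-cover $\cW\to\cX\times_Z U_Z$; choosing the approximation to agree with the algebraised cover modulo the ideal of $U_Y$ keeps $\cW|_{\cE_{U_Z}}\cong\cP_A$, and then $\cX\times_Z U_Z\cong[\cW/G]$. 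Shrinking $U_Z$ once more (using properness of $\cX\times_Z U_Z\to U_Z$ and the fact that the fibre over $y$ lies on $\cE$, together with \Cref{remark_cX_trivial_gen_stab}) we may assume the non-trivial stabilisers of $\cX\times_Z U_Z$ all lie on $\cE_{U_Z}$; then $\cW$ is smooth and tame as a finite \'etale cover of such a stack, and generically a scheme because it is finite \'etale over $\cX\times_Z U_Z$, which is generically an algebraic space and hence generically a scheme. Assembling the maps built above gives the diagram in the statement. The main obstacle is this last paragraph --- concretely, making the formal $G$-cover along $\cE_{U_Z}$ descend to an honest cover over an \'etale neighbourhood of $y$ while pinning down its restriction to $\cE_{U_Z}$; everything before is bookkeeping with the tame local structure theorem and \'etale extension.
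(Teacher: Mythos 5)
Your proposal is correct and follows essentially the same route as the paper's proof: tame local structure for $\cY$ plus Zariski-local trivialization of the weighted projective bundle, lifting the \'etale chart from $Y$ to $Z$, extension of the $G$-torsor over the infinitesimal thickenings by topological invariance of the \'etale site, algebraization over the completion, approximation over the henselization of the pair and spreading out to an \'etale neighbourhood, with generic triviality of stabilizers coming from \Cref{remark_cX_trivial_gen_stab}. The only cosmetic difference is at the formal-to-adic step, where you invoke formal GAGA (Grothendieck existence) for the proper stack $\cX\times_Z\widehat{U_Z}$, while the paper uses coherent completeness of the pair $(\cE_0,\widehat{\cX})$ together with Tannaka duality to produce the morphism to $\cB G$; these tools play the same role here.
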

The proof will be in several steps, we begin with the following two lemmas.
\begin{Lemma}\label{lemma_extend_etale_covers}
    In the hypothesis of \Cref{theo:main}, there exists an \'etale covering $\{Z_i\}$ of $Y$ in $Z$ such that $\cY_i:=\cY \times_Z Z_i$ is of the form $[\spec (A_i)/G_i]$ for some finite group $G_i$ \textcolor{black}{of order not divisible by $char(k)$} acting on the affine scheme $\spec (A_i)$ and $\cE \times_{\cY} \cY_i$ is a trivial weighted projective bundle.
\end{Lemma}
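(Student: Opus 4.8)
The plan is to reduce at once to a single point of $Y$. Recall from \Cref{teo_contraction_cms} that $Y$ (the coarse space of $\cY$) is a closed subspace of $Z$ and that the structure morphism $\cY\to Z$ is the composite $\cY\to Y\hookrightarrow Z$; hence for any $Z'\to Z$ one has $\cY\times_ZZ'\cong\cY\times_Y(Z'\times_ZY)$, so the pullback of $\cY$ (and of $\cE$) is governed entirely by the \'etale $Y$-scheme $Z'\times_ZY$. Since the $Z_i$ are only required to cover $Y$, it suffices to produce, for each $y\in Y$, one \'etale morphism $Z_i\to Z$ whose image contains $y$ and for which $\cY\times_ZZ_i$ and $\cE\times_\cY(\cY\times_ZZ_i)$ have the asserted form.

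First I would apply the local structure theorem for tame Deligne--Mumford stacks \cite{AOV_tame}*{Thm.~3.2}: there is an affine \'etale neighbourhood $Y_0\to Y$ of $y$ and an isomorphism $\cY\times_YY_0\cong[\spec(A_0)/G]$ with $G$ the (constant, since $k$ is algebraically closed) stabilizer group of a geometric point of $\cY$ over $y$; tameness forces $|G|$ invertible in $k$, and one may arrange the presentation so that $\spec(A_0)$ carries a $G$-fixed point $\bar a$ lying over $y$ (and $\spec(A_0)$ is smooth, as $\cY$ is). Next I would promote the \'etale $Y$-scheme $Y_0\to Y$ to an \'etale $Z$-scheme: working \'etale-locally on $Z$ so that $Z$, hence its closed subspace $Y$, is an affine scheme, and using that $Y\hookrightarrow Z$ is a closed immersion, \cite{EGAIV}*{Prop.~18.1.1} yields (after possibly shrinking around $y$) an \'etale $Z'\to Z$ together with an isomorphism $Z'\times_ZY\cong Y_0$ over $Y$; consequently $\cY\times_ZZ'\cong[\spec(A_0)/G]$.

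It remains to make the weighted projective bundle trivial. Write $\cE\times_\cY\spec(A_0)\cong\cP(\cN)$ for a weighted affine bundle $\cN\to\spec(A_0)$ carrying a $G$-action lifting the one on $\spec(A_0)$. The automorphism sheaf of a weighted affine bundle of weights $(a_1,\dots,a_n)$ is an extension of a product of general linear groups (acting on the graded pieces of fixed weight) by a unipotent sheaf of groups (the ``non-linear'' coordinate changes, as in \Cref{twistedweightedvectorbundle}), hence is special; therefore $\cN$ is Zariski-locally trivial on $\spec(A_0)$, and in particular trivial on an affine open neighbourhood of $\bar a$. Intersecting the finitely many $G$-translates of such a neighbourhood (shrinking to a basic open to stay affine) gives a $G$-stable affine open $\spec(A_1)\ni\bar a$ on which $\cN$ is trivial, so $\cE\times_\cY\spec(A_1)\cong\cP_{A_1}(a_1,\dots,a_n)$. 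Finally I would set $Z_i:=Z'\smallsetminus\big((Z'\times_ZY)\smallsetminus Y_1\big)$, where $Y_1\subseteq Y_0$ is the (open) coarse space of $[\spec(A_1)/G]$; since $Z'\times_ZY$ is closed in $Z'$ and $Y_1$ is open in it, $Z_i\to Z$ is \'etale, $y\in\im(Z_i)$, and $Z_i\times_ZY\cong Y_1$. Then $\cY_i:=\cY\times_ZZ_i\cong[\spec(A_1)/G]$, and $\cE\times_\cY\cY_i$ pulls back along the atlas $\spec(A_1)\to\cY_i$ to $\cP_{A_1}(a_1,\dots,a_n)$, i.e.\ it is a trivial weighted projective bundle over $\cY_i$. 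Letting $y$ range over $Y$ gives the required \'etale covering $\{Z_i\}$, with $G_i:=G$ of order prime to $\operatorname{char}(k)$.

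The step I expect to be most delicate is the trivialization of the weighted projective bundle \emph{compatibly with the quotient presentation}: because the fibre $G$-representation attached to $\cN$ at $\bar a$ is in general nontrivial, $\cE\times_\cY\cY_i$ will not be literally isomorphic to $\cP(a_1,\dots,a_n)\times\cY_i$, so ``trivial weighted projective bundle'' must be read as triviality after pullback to the atlas (which is exactly what is used in \Cref{prop_reduction_to_quotient}); the role of the $G$-fixed point $\bar a$ from the local structure step is precisely to allow one to make the trivializing open $G$-stable without losing the point over $y$. A secondary, essentially routine, technical point is the extension of an \'etale neighbourhood across the (non-nilpotent) closed immersion $Y\hookrightarrow Z$, handled by \cite{EGAIV}*{Prop.~18.1.1} after reducing to $Z$ affine.
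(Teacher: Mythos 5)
Your argument is correct and follows essentially the same route as the paper's proof: the local structure theorem \cite{AOV_tame}*{Thm. 3.2} (or \cite{AlperHallRydh}*{Thm. 1.1}) to present $\cY$ \'etale-locally as $[\spec(A)/G]$, \cite{EGAIV}*{Prop. 18.1.1} to lift the \'etale neighbourhood of $Y$ across the closed immersion $Y\hookrightarrow Z$, and specialness of the graded structure group to trivialize the weighted bundle Zariski-locally after $G$-invariant shrinking. Your reading of ``trivial'' as triviality after pullback to the atlas $\spec(A_i)$ is exactly how the lemma is used in \Cref{prop_reduction_to_quotient}, so that extra care is consistent with (indeed slightly more precise than) the paper's phrasing.
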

\begin{proof}
    We consider the morphisms 
    $$ \cY \rightarrow Y \hookrightarrow Z$$
    where $\cY \rightarrow Y$ is the coarse moduli space map. First of all, for every point in $\cY$ there exists an \'etale neighbourhood $[\spec (A)/G]\rightarrow \cY$ of the point where $G$ is the automorphism group of the point, therefore finite. Furthermore, the induced morphism $\spec (A^G) \rightarrow Y$ is also \'etale. See \cite{AOV_tame}*{Thm. 3.2} or \cite{AlperHallRydh}*{Thm. 1.1}. Because weighted projective bundles are classified by special groups, up to shrinking (Zariski locally) we can assume $\cE \times_{\cY} [\spec (A) /G]$ is a trivial weighted projective bundle. 

    It remains to lift the \'etale morphism $\spec(A^G)\to Y$ to an \'etale morphism to $Z$. This can be done Zariski locally on $\spec(A^G)$ using \cite{EGAIV}*{Prop. 18.1.1}. Up to shrinking again, we get the following diagram with cartesian squares:
    $$
   \begin{tikzcd}
& \spec (A') \arrow[d, "{\rm open}", hook] \arrow[r, hook] & \spec (B) \arrow[dd, "{\rm et}"] \\
{[\spec (A)/G]} \arrow[d, "{\rm et}"] \arrow[r, "{\rm cms}"] & \spec(A^G) \arrow[d, "{\rm et}"]                       &                                \\
\cY  \arrow[r, "{\rm cms}"]                                & Y \arrow[r, hook]                                      & Z.                             
\end{tikzcd}
$$
   Because the coarse moduli space map is cohomologically affine, we get that $\spec (A') \times_{\spec(A^G)} [\spec (A)/G]$ is isomorphic to $[\spec(A_0)/G]$ where $A_0$ is a  $G$-invariant affine open of $\spec (A)$ and $A'\simeq A_0^G$.
\end{proof}

\begin{Lemma}\label{lemma_extending_torsor_nonred_structure}
    Let $\cC$ be a non-reduced \textcolor{black}{tame} Deligne-Mumford stack, and let $i:\cC^{red}\to \cC$ be its reduced structure. Then for every finite \'etale group $G$, the restriction map $\oH^1(\cC,G)\to \oH^1(\cC^{red},G)$ is an isomorphism, where the cohomology is the \'etale cohomology.
\end{Lemma}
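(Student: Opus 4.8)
The plan is to deduce the lemma from the \emph{topological invariance of the small étale site} under nilpotent thickenings: once one knows that base change along $i\colon\cC^{red}\to\cC$ induces an equivalence between the small étale sites, the statement about $\oH^1$ is formal, because an equivalence of sites induces an equivalence of the associated topoi, hence an isomorphism on $\oH^1$ with constant coefficients.

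Concretely, I would proceed in three steps. First, recall the scheme case: for a thickening $T_0\hookrightarrow T$ of schemes, the functor $W\mapsto W\times_T T_0$ is an equivalence $\mathrm{Et}_T\xrightarrow{\sim}\mathrm{Et}_{T_0}$, it commutes with fibre products and preserves the terminal object, and it preserves and reflects surjectivity of families of morphisms, since $T$ and $T_0$ have the same underlying topological space (topological invariance of the small étale site, \cite{stacks-project}). Second, I would globalize to $\cC$ by choosing an étale atlas $U\to\cC$ with $U$ a scheme and setting $R=U\times_\cC U$; since $i$ is a thickening, $U_0:=U\times_\cC\cC^{red}$ is $U^{red}$, $R_0:=R\times_\cC\cC^{red}$ is $R^{red}$, and $[R_0\rightrightarrows U_0]$ presents $\cC^{red}$. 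As every structure morphism of the groupoid $[R\rightrightarrows U]$ (including those of the iterated fibre products) is a morphism of thickenings, the scheme-level equivalences on $\mathrm{Et}_U$ and $\mathrm{Et}_R$ are compatible with the groupoid structure, and descent along $U\to\cC$ yields an equivalence of sites $i^\ast\colon\mathrm{Et}_\cC\xrightarrow{\sim}\mathrm{Et}_{\cC^{red}}$ still preserving fibre products, the terminal object, and surjective families. (Tameness plays no role here; it is merely carried along from the ambient hypotheses.)

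Finally, I would conclude in one of two equivalent ways. Topos-theoretically: the equivalence of sites gives an equivalence of the étale topoi $\cC^{red}_{et}\simeq\cC_{et}$, which is exact and preserves the terminal object, hence sends the constant sheaf $\underline G$ to the constant sheaf $\underline G$, and therefore induces a bijection of pointed sets $\oH^1(\cC,G)\xrightarrow{\sim}\oH^1(\cC^{red},G)$, which unwinds to restriction along $i$. Hands-on: $\oH^1(-,G)$ is the set of isomorphism classes of $G$-torsors, and a $G$-torsor is an object $P$ of the étale site with an action $a\colon P\times G\to P$ over the base such that $(\mathrm{pr}_1,a)\colon P\times G\to P\times_{(-)}P$ is an isomorphism and $P\to(-)$ is surjective (equivalently, $P$ is finite étale over the base since $G$ is finite); each piece of this data and each of these conditions is preserved and reflected by the equivalence of the previous step, so $i^\ast$ is an equivalence on the category of $G$-torsors, and passing to isomorphism classes gives the claim. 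The only step requiring genuine care — the main, though routine, obstacle — is the descent argument assembling the scheme-level equivalences $2$-functorially along the atlas; everything afterward is formal.
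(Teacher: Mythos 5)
Your proposal is correct, but it takes a genuinely different route from the paper. The paper argues sheaf-theoretically: it shows $i_*\underline{G}=\underline{G}$ and $R^ni_*\underline{G}=0$ for $n>0$ by a stalkwise computation at strictly Henselian local rings (using that $A$ and $A^{red}$ have the same maps to the finite scheme $G$, and that quotients of strictly Henselian local rings are strictly Henselian), and then concludes via the Leray spectral sequence for $i$. You instead invoke topological invariance of the small \'etale site under nilpotent thickenings, globalize it to Deligne--Mumford stacks through a groupoid presentation $[R\rightrightarrows U]$, and deduce the claim formally from the resulting equivalence of \'etale topoi, either abstractly or as an equivalence of categories of $G$-torsors. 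The two arguments rest on the same underlying fact (invariance of \'etale-local data under thickenings at strictly Henselian points), but yours buys more: it treats nonabelian $G$ without any appeal to the low-degree (nonabelian) Leray sequence, which the paper's phrasing glosses over, and it yields the stronger statement of an equivalence of torsor categories and isomorphisms $\oH^n(\cC,G)\simeq\oH^n(\cC^{red},G)$ in all degrees for abelian $G$; the paper's computation is shorter and entirely self-contained. One small caution in your hands-on variant: a $G$-torsor over $\cC$ is a priori a sheaf on the small \'etale site, representable (since $G$ is finite) by an algebraic space finite \'etale over $\cC$ rather than necessarily by a scheme, so if you set up the site with scheme objects you should either enlarge it to algebraic spaces or simply rely on your topos-theoretic formulation, which already covers this; as written this is a matter of phrasing, not a gap.
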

\begin{proof}

If we denote with $G_{\cC^{red}}$ the constant sheaf on the \'etale topology of $\cC^{red}$ and with $G_{\cC}$ the constant sheaf on the \'etale topology of $\cC$, then $i_*G_{\cC^{red}}=G_{\cC}$ and $R^ni_*G_{\cC^{red}}=0$ for $n>0$.

Indeed, it suffices to check this by replacing $\cC$ with the spectrum a strict Henselian ring $\spec(A)$, and to observe that since $G$ topologically is a finite disjoint union of closed points, there is an equivalence
    $$\operatorname{Hom}(\spec(A),G) \to \operatorname{Hom}(\spec(A^{red}),G).$$
Similarly, as the quotient of a local strict Henselian ring is still local and strict Henselian, $R^ni_*G_{\cC^{red}}=0$ for $n>0$. Then the isomorphism follows from the Leray spectral sequence.
\end{proof}

%\begin{Oss}
    %\textcolor{black}{The previous lemma is essentially a consequence of the fact that $\cB G$ is \'etale over the base when $G$ is an \'etale group. Consider $\cX \rightarrow \cY$ an \'etale morphism of algebraic stacks. We know that the morphism of $1$-groupoids
    %$$\Hom(\spec A, \cX) \rightarrow \Hom(\spec A, \cY) \times_{\Hom(\spec (A/I), \cY)} \Hom(\spec(A/I), \cY)$$ is an equivalence for every infinitesimal thickening $\spec(A/I) \hookrightarrow \spec A$. Alternatively, we say that $\cX \rightarrow \cY$ is formally \'etale. See for example \cite[Corollary B.9]{RydhEt}. It follows from a standard descent argument that the equivalence above holds for any infinitesimal thickenings $\cZ \hookrightarrow \cZ'$ of algebraic stacks. }  
%\end{Oss}
\begin{proof}[Proof of \Cref{prop_reduction_to_quotient}]
Consider an open of the cover of \Cref{lemma_extend_etale_covers}. Up to replacing $Z$ with this cover, we can assume that 
\begin{enumerate}
\item $Z=\spec(B)$ is affine,
    \item $\cY=[\spec(A)/G]$,
    \item\label{pt_2} $\cE\cong [\cP_A(a_1,...,a_n)/G]$ with $[\cP_A(a_1,...,a_n)/G]\times_{[\spec(A)/G]}\spec(A)\cong \cP_A(a_1,...,a_n)$.
\end{enumerate}
Our goal will be to find an \'etale neighbourhood $U_Z$ of $\spec(A^G)$ in $Z$ such that also $\cX\times_ZU_Z$ admits a $G$-torsor.  In other terms, we need to extend the $G$-torsor over $\cE$ of (\ref{pt_2}), to a $G$-torsor over the pull-back of $\cX$ to an \'etale neighbourhood of $\spec(A^G)$ in $Z$. 
To do so, we proceed in two steps. First, we extend the $G$-torsor on the "completion" of $\cE\subseteq \cX$. Then, using \cite[Thm. 3.4]{AlperHallRydh}, we algebraize it to produce the desired $U_Z$.

 Consider the ideal $J$ defining the closed embedding $\spec(A^G) \hookrightarrow \spec(B)$ and let $Z^{[n]}$ the $n$-th infinitesimal thickenings. Observe that $Z^{[n]}$ is an affine scheme and let $B_n$ be its ring of global sections. Then we denote by $\widehat{B}:= \lim_n B_n$, which is a $J$-adic ring. Finally let $\widehat{\cX}:=\cX \times_Z \spec \widehat{B}$ and $\cE_n:=\cX \times_Z \spec B_n$. Observe that $\cE_0$ might not be isomorphic to $\cE$, however $\cE$ is the reduced substack of $\cE_n$ for every $n$ {as $\cE$ is reduced and the underlying topological spaces agree with the fiber product $Y\times_Z\cX$. Indeed $Y\to Z$ is the closed embedding of the locus where $X\to Z$ are not isomorphic (namely, the image of $\cE$ in $Z$)}.

\textbf{Step 1.} The pair $(\cE_0, \widehat{\cX})$ is coherently complete. 

    This follows from \cite[Thm. 1.4]{Olsson}.

\textbf{Step 2.}
    There is a morphism $\widehat{\cX} \rightarrow \cB G$ such that the composition $\cE \hookrightarrow \widehat{\cX} \rightarrow \cB G$ corresponds to the $G$-torsor $\cP_A(a_1,...,a_n)\to [\cP_A(a_1,...,a_n)/G]$.
    
Indeed, from Tannaka duality \cite[Thm. 1.1]{HR19},
 $$\Hom(\widehat{\cX},\cB G) = \Hom( {\rm Coh}(\cB G), {\rm Coh}(\widehat{\cX})).$$
 Since the pair  $(\cE_0, \widehat{\cX})$ is coherently complete, we have that 
 $$ \Hom({\rm Coh}(\cB G), {\rm Coh}(\widehat{\cX})) =  \Hom({\rm Coh}(\cB G), \lim_n {\rm Coh}(\cE_n))= \lim_n \Hom({\rm Coh}(\cB G), {\rm Coh}(\cE_n))  $$
 which using Tannaka duality again gives us 
 $$ \Hom(\widehat{\cX}, \cB G) = \lim_n \Hom(\cE_n, \cB G).$$

Therefore it is enough enough to find a compatible sequence of $G$-torsor for the sequence of infinitesimal thickenings $$\dots \hookrightarrow \cE_n \hookrightarrow \cE_{n+1} \hookrightarrow \dots$$
This follows from \Cref{lemma_extending_torsor_nonred_structure}. 

\textbf{End of the argument}. Consider the henselization $B^h$ associated to the closed embedding $\spec(A^G) \hookrightarrow \spec (B)$.  We denote by $\cX^h$ the fiber product $\cX \times_{Z} \spec(B^h)$. Notice that we have a factorization $\spec(A^G)\hookrightarrow \spec (\widehat{B}) \rightarrow \spec( B^h) \rightarrow \spec (B)$. It suffices to show that one can extend the $G$-torsor from $\widehat{\cX}$ to $\cX^h$. 

Let $F:\operatorname{Sch}_{B^h}\to \operatorname{Set}$, sending $T\to \spec(B^h)$ to the underlying set of the groupoid $\Hom(\cX^h \times_{\spec(B^h)} T, \cB G)$. The previous lemma implies that we have an element $\widehat{\xi} \in F(\spec \widehat{B})$ that extends the $G$-torsor $\cE_G \rightarrow \cE$ {coming from the quotient $\cP_A(a_1,\ldots,a_n)\to [\cP_A(a_1,\ldots,a_n)/G]$}. Since $\cX$ is quasi-compact over $\spec B$ and $\cB G$ is locally of finite presentation, $F$  is limit-preserving (see \cite[Prop. 4.18]{LMB}). Then from \cite[Thm. 3.4]{AlperHallRydh}, there is an element $\xi^h \in F(\spec B^h)$ (or equivalently a $G$-torsor over $\cX^h$) which restricts to the $G$-torsor $\cE_G \rightarrow \cE$. 

Finally, because $\spec B^h$ is the limit of the \'etale neighbourhoods of $\spec (A^G)$ in $\spec B$, then there exists an \'etale neighbourhood $\spec \widetilde{B} \rightarrow \spec B$ and an object $\xi_{\widetilde{B}} \in F(\spec \widetilde{B})$ which extends $\xi^h$. Let $U_Z$ be the \'etale neighbourhood $\spec \widetilde{B}$ of $y$ in $Z$ and $\cW \rightarrow U_Z\times_Z \cX$ be the $G$-torsor we constructed. Then \Cref{remark_cX_trivial_gen_stab} implies that $\cW$ has generically trivial stabilizers.\end{proof}

\begin{Cor}\label{cor_cms_has_finite_quotient_sing}
With the assumptions of \Cref{theo:main}, the normal algebraic space $Z$ and the diagram (\ref{eqn:cmsextend}) satisfy the hypothesis of Theorem \ref{thm_DM_stack_is_determined_by_cms_and_codim1}.
\end{Cor}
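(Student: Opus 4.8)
The plan is to verify the hypotheses of \Cref{thm_DM_stack_is_determined_by_cms_and_codim1} with ``$X$''$=Z$, dense open $U=Z\smallsetminus Y$, and $\cU=\cX\smallsetminus\cE$, reducing via \Cref{rem:etloccheck} to an \'etale-local situation in which $Z$ becomes a scheme. The elementary requirements come for free: by \Cref{teo_contraction_cms} the algebraic space $Z$ is separated, normal, and (by its construction via \cite{artin1970algebraization}) of finite type, $Y\hookrightarrow Z$ is a closed embedding, and $\rho\colon X\to Z$ is an isomorphism away from $E$; hence $\cX\smallsetminus\cE\to X\smallsetminus E\cong Z\smallsetminus Y$ is a coarse moduli map with a smooth tame Deligne-Mumford source. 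For the codimension bound, $\pi\colon\cE\to\cY$ has positive-dimensional fibers and $\cE$ is a Cartier divisor in $\cX$, so $\dim\cX=\dim\cE+1\geq\dim\cY+2$; passing to coarse spaces (where $\dim Z=\dim X$, since $\rho$ is proper, surjective and birational) gives $\operatorname{codim}_Z Y=\dim\cX-\dim\cY\geq 2$, and $Z\smallsetminus Y$ is dense since it is identified with the dense open $X\smallsetminus E\subseteq X$.

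By \Cref{rem:etloccheck} it then suffices to produce an \'etale cover $\{V\to Z\}$ such that, over each $V$, there is a smooth tame Deligne-Mumford stack $\cX_V$ with coarse moduli space $V$ fitting into a cartesian square
$$\xymatrix{(\cX\smallsetminus\cE)\times_Z V\ar[r]\ar[d] & \cX_V\ar[d]\\ (Z\smallsetminus Y)\times_Z V\ar[r] & V}$$
with both vertical arrows coarse moduli maps. I would take for the cover $Z\smallsetminus Y$ together with the \'etale neighbourhoods $U_Z$ of the points $y\in Y$ supplied by \Cref{prop_reduction_to_quotient}, shrunk as required below; since the shrinkings only delete closed subsets and $Z\smallsetminus Y$ is already a member, the family still covers $Z$. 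Over $Z\smallsetminus Y$ one simply takes $\cX_V=\cX\smallsetminus\cE$, whose coarse space is $X\smallsetminus E=Z\smallsetminus Y$, making the square tautological.

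Over $U_Z$, \Cref{prop_reduction_to_quotient} gives $\cX\times_Z U_Z=[\cW/G]$, $\cE\times_Z U_Z=[\cP_A(a_1,\dots,a_n)/G]$ and $\cY\times_Z U_Z=[\spec(A)/G]$, where $\cW$ is smooth tame and $G$ is the automorphism group of $y$ (finite, of order prime to $\operatorname{char}(k)$), which fixes a point $\xi$ of $\spec(A)$ over $y$ and acts trivially on $U_Z$. Shrinking $U_Z$ to an affine open around $\xi$ also trivializes the pullback of $\cL$ to $\spec(A)$ (the Picard group of the semilocalization of $\spec(A)$ along the fiber over $\xi$ vanishes), so that the normal bundle of $\cP_A(a_1,\dots,a_n)$ in $\cW$ becomes $\cO_{\cP_A}(-1)$; the resulting data then meets the hypotheses of \Cref{cor_blowdown_Y_is_specA_divided_by_G}, since $\spec(A)$ is smooth and finite over the affine scheme $U_Z$ and $\cW$ is separated over $U_Z$ with coarse space projective over $U_Z$ (it is finite over $X\times_Z U_Z$, which is projective over $U_Z$ by \Cref{teo_contraction_cms}). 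Thus \Cref{cor_blowdown_Y_is_specA_divided_by_G} yields a smooth tame Deligne-Mumford stack $\cX_{U_Z}=[\cZ/G]$ and a contraction $[\cW/G]\to\cX_{U_Z}$ which is a weighted blow-up, is an isomorphism away from $[\cP_A(a_1,\dots,a_n)/G]$, and has $[\spec(A)/G]\hookrightarrow\cX_{U_Z}$ as a closed substack. Its coarse moduli space is canonically $U_Z$: applying \Cref{teo_contraction_cms} over $U_Z$, the contraction of $X\times_Z U_Z$ is unique, and both $U_Z$ (via flat base change along $U_Z\to Z$) and the coarse space of $\cX_{U_Z}$ (using that $[\cW/G]\to\cX_{U_Z}$ is a weighted blow-up and \Cref{cor_conditions_are_satisfied_for_affine_cont_to_be_a_pushout}) realize it, so they coincide compatibly. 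Finally the square above is cartesian because $[\cW/G]\to\cX_{U_Z}$ is an isomorphism over $U_Z\smallsetminus\spec(A^G)=(Z\smallsetminus Y)\times_Z U_Z$, while $(\cX\smallsetminus\cE)\times_Z U_Z=[\cW/G]\smallsetminus[\cP_A(a_1,\dots,a_n)/G]$. Having verified the criterion of \Cref{rem:etloccheck} on this cover, we conclude that $Z$ and $\sqrt{Z^{\mathrm{can}}/(D,\mathbf{e})}$ have finite tame quotient singularities and that the ramification data of $\cU\to U^{\mathrm{can}}$ extends to $Z^{\mathrm{can}}$, i.e. the hypotheses of \Cref{thm_DM_stack_is_determined_by_cms_and_codim1} hold.

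The step I expect to be the main obstacle is the last one: identifying the abstractly-produced coarse space of $\cX_{U_Z}$ with the prescribed \'etale neighbourhood $U_Z$ (equivalently, that the contraction of \Cref{teo_contraction_cms} commutes with \'etale base change on $Z$), keeping track of the successive shrinkings so that one still has an \'etale cover of $Z$, and checking that the twist $\pi^*\cL$ of the normal bundle disappears after passing to the $G$-cover; the rest is a direct concatenation of \Cref{prop_reduction_to_quotient}, \Cref{cor_blowdown_Y_is_specA_divided_by_G} and \Cref{rem:etloccheck}.
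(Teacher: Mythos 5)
Your proposal is correct and follows essentially the same route as the paper: reduce via \Cref{rem:etloccheck} to the \'etale neighbourhoods $U_Z$ supplied by \Cref{prop_reduction_to_quotient}, apply \Cref{cor_blowdown_Y_is_specA_divided_by_G} there, and identify the resulting coarse space with $U_Z$. The only cosmetic differences are that the paper makes this last identification by applying Zariski's main theorem directly to the proper, birational, quasi-finite map $X'\to U_Z$ rather than invoking the uniqueness clause of \Cref{teo_contraction_cms}, and that you spell out points the paper leaves implicit (the codimension $\geq 2$ bound and the trivialization of the twist $\pi^*\cL$ on $\spec(A)$ after shrinking).
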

\begin{proof}
    As $X$ and $Z$ are isomorphic away from $Y$ and $X$ is the coarse moduli space of the smooth Deligne-Mumford stack $\cX$, by Remark \ref{rem:etloccheck} it suffices to check that $Z$ satisfies the hypothesis of Theorem \ref{thm_DM_stack_is_determined_by_cms_and_codim1} in a \'etale neighbourhood of $Y$. From \Cref{prop_reduction_to_quotient}, every point of $y \in Y \subset Z$ has an \'etale neighbourhood $c\colon U_Z \to Z$ such that if we base change (\ref{eqn:general}) along $c$ we obtain:
    $$\xymatrix{[\cP_A(a_1,...,a_n)/G]\ar[d] \ar[r] & [\cW/G]\ar[d] \\ [\spec(A)/G]\ar[r] & U_Z.}$$
    By \Cref{cor_blowdown_Y_is_specA_divided_by_G}, this diagram admits a stacky contraction, i.e. a smooth Deligne-Mumford stack $\cX'\to U_Z$, separated over $U_Z$, which fits in a diagram as below.
    $$\xymatrix{[\cP_A(a_1,...,a_n)/G]\ar[d] \ar[r] & [\cW/G]\ar[d] &\\ [\spec(A)/G]\ar[r] & \cX'\ar[r]&U_Z.}$$
    As $\cX'$ is separated and smooth, it has a coarse moduli space $X'\to U_Z$ with finite tame quotient singularities. The map $X'\to U_Z$ is proper, birational and quasi-finite, so it is an isomorphism by Zariski's main theorem. Thus, we may base change the diagram (\ref{eqn:cmsextend}) along $U_Z \to Z$ which can then be made into a commutative square:
 $$ 
    \begin{tikzcd}
    \mathrm{[}W/G\mathrm{]} \smallsetminus [\cP_A(a_1,...,a_n)/G] \arrow[d] \arrow[r, dashed, hook] & \cX' \arrow[d, dashed] \\
    U_Z \smallsetminus \spec(A^G) \arrow[r, hook]   & U_Z                  
    \end{tikzcd}
    $$
    where the vertical maps are coarse moduli maps and the horizontal maps are open immersions. Thus we may conclude by applying Remark \ref{rem:etloccheck}. \end{proof}
\begin{proof}[Proof of \Cref{theo:main}]
From \Cref{cor_cms_has_finite_quotient_sing}, we may apply \Cref{thm_DM_stack_is_determined_by_cms_and_codim1}, to obtain a smooth Deligne-Mumford stack $\cZ\to Z$ which has $Z$ as coarse moduli space and which contains $\cX \smallsetminus \cE$ as an open substack, or more precisely there is a commutative diagram 
$$ 
\begin{tikzcd}
\cX \smallsetminus \cE \arrow[r, hook] \arrow[rd, "{\rm open}", hook] & \cX \arrow[r, "{\rm cms}"]  & X \arrow[d] \\& \cZ \arrow[r, "{\rm cms}"'] & Z.          
\end{tikzcd}$$

We need to show that there is an arrow $\cX\to \cZ$ and a closed embedding $\cY\to \cZ$ which fit in the desired diagram.

    \textit{Construction of $\cX\to \cZ$.} We plan on using \Cref{lemma_extension_on_open_and_etale_neigh_of_complement}, so we replace $Z$ with $U_Z$ as in \Cref{prop_reduction_to_quotient}. So now the assumptions of \Cref{cor_blowdown_Y_is_specA_divided_by_G} apply, and we can construct the stack $\cX'$ of  \Cref{cor_blowdown_Y_is_specA_divided_by_G} which fits in the desired diagram. {By \Cref{thm_DM_stack_is_determined_by_cms_and_codim1} and \Cref{rem:etloccheck}, both $\cX'$ and $\cZ$ are uniquely determined by their coarse moduli
    spaces and an open substack with complement of codimension at least 2. They have isomorphic coarse moduli spaces from \Cref{lemma_isom_coarse_moduli_space}, and they are isomorphic away from the image of the exceptional divisor (which is closed and by assumption has codimension at least 2). As there is a map $\cX\to \cX'$, there is a map $\cX\to \cZ$ as desired. }

    %\textcolor{red}{OLD VERSION: As both $\cX'$ and $\cZ$ are uniquely determined from its coarse moduli space and $\cX\smallsetminus \cE$, and , they are isomorphic, so there is a map $\cX\to \cZ$.}
    
    \textit{Construction of $\cY\to \cZ$.} 
    Firstly, notice that we know that such a morphism exists \'etale locally on $\cY$ by construction, because the pullback of $\cZ$ through $U_Z\rightarrow Z$ coincides with $\cX'$. We need to prove that it gives a morphism $\cY \rightarrow \cZ$.
    We denote by $- \circ \pi$ the morphism over $\cY$ $$\cY \times \cZ \simeq \Hom_{\cY}(\cY,\cZ) \longrightarrow \Hom_{\cY}(\cE,\cZ)$$
    defined by precomposing with the morphism $\pi:\cE \rightarrow \cY$, and we use the shorthand $\Hom_{\cY}(\cY,\cZ)$ for $\Hom_{\cY}(\cY,\cY\times \cZ)$, and similarly $\Hom_{\cY}(\cE,\cZ):=\Hom_{\cY}(\cE,\cY\times\cZ)$.
    The composition $\cE \hookrightarrow \cX \rightarrow \cZ$ corresponds to a morphism $\phi:\cY \rightarrow \Hom_{\cY}(\cE,\cZ)$. We need to find a morphism $\cY \rightarrow \cY \times \cZ$ which lifts $\phi$, or equivalently a section of the projection 
    $$\eta:\cY \times_{\Hom_{\cY}(\cE,\cZ)} \Hom_{\cY}(\cY,\cZ) \rightarrow \cY$$ induced by the fiber product construction. Because $\pi:\cE \rightarrow \cY$ is a weighted projective stack over $\cY$, one has $\pi_*\cO_\cE\cong \cO_\cY$, so we can apply \Cref{lem:very-weak-rig} and thus $-\circ \pi$ is a monomorphism. By base change, the same is true for $\eta$. However, finding a section of a monomorphism is equivalent to proving it is an equivalence, which can be checked \'etale locally on $\cY$.

    \textit{Uniqueness of $\cZ$.} Let $\cZ'$ be a smooth and separated Deligne-Mumford stack that fits in the usual contraction diagram. Then we have that its coarse moduli space $Z'$ fits also in the contraction diagram at the level of coarse moduli spaces, i.e. there is an induced contraction diagram
    \[ 
    \begin{tikzcd}
        E \ar[r] \ar[d] & X \ar[d] \\
        Y \ar[r] & Z'.
    \end{tikzcd} \\
    \]
    From this we deduce that $Z\simeq Z'$, thanks to the uniqueness part of \Cref{teo_contraction_cms}. But now $\cZ$ and $\cZ'$ share the same coarse space, and are isomorphic up to substacks of codimension $\geq 2$. Then from \Cref{thm_DM_stack_is_determined_by_cms_and_codim1} we deduce that $\cZ\simeq \cZ'$. \textcolor{black}{The resulting square is a pushout since the conditions of \Cref{prop_pushout_criteria} are satisfied by \Cref{cor_conditions_are_satisfied_for_affine_cont_to_be_a_pushout}.}
\end{proof}

\begin{Lemma}\label{lem:very-weak-rig}
Let $f:\cX \rightarrow \cY$ be a 
 proper and flat morphism of finite presentation of algebraic stacks over a base algebraic space $S$ and let $\cZ$ be an algebraic stack with affine diagonal locally of finite presentation over $S$. Then the morphism of algebraic stacks over $\cY$
$$- \circ f: \cY\times_S\cZ\simeq\Hom_{\cY}(\cY,\cY\times_S\cZ) \longrightarrow \Hom_{\cY}(\cX,\cY\times_S\cZ)$$
is representable by algebraic spaces and locally of finite presentation.
Moreover, suppose that the morphism $$f^{\sharp}_T:\cO_{T}\rightarrow f_{T,*}\cO_{\cX_T}$$
is an isomorphism for every $T\rightarrow \cY$ where $\cX_T:=\cX \times_{\cY} T$, then $- \circ f$ is a monomorphism.

\end{Lemma}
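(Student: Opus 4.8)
The plan is to deduce the first assertion from the algebraicity of Hom--stacks together with a base--change computation for the relative diagonal of $-\circ f$, and to obtain the \emph{moreover} part by upgrading that same computation using the hypothesis on $f^{\sharp}_T$. Throughout write $\cW:=\cY\times_S\cZ$, which is locally of finite presentation over $\cY$ (being the base change of $\cZ\to S$) and has affine diagonal (since $\cZ$ does); note that one identifies $\cW$ with $\Hom_{\cY}(\cY,\cW)$, and that under our standing conventions (affine stabilizers, locally excellent) the algebraicity results of Hall--Rydh apply.

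First I would recall that, since $\cX\to\cY$ is proper, flat and of finite presentation and $\cW\to\cY$ is locally of finite presentation with affine diagonal, the stack $\Hom_{\cY}(\cX,\cW)$ is algebraic and locally of finite presentation over $\cY$ by \cite{HR19}. Hence $-\circ f$ is a morphism between algebraic stacks both locally of finite presentation over $\cY$, so it is itself locally of finite presentation; it remains to show it is representable by algebraic spaces, i.e. that its relative diagonal is a monomorphism. Fix a scheme $T\to\cY$ and two $T$-points $\sigma_1,\sigma_2$ of $\cW$ over $\cY$, equivalently two $S$-morphisms $\sigma_i\colon T\to\cZ$. Since $\cZ$ has affine diagonal, the sheaf $\underline{\Isom}_T(\sigma_1,\sigma_2)$ is represented by an affine $T$-scheme $I=\spec_T(\cA)$ for a quasi-coherent $\cO_T$-algebra $\cA$; because $\underline{\Isom}$ and $\spec$ commute with base change and $\sigma_i\circ f_T$ is the composite $\cX_T\xrightarrow{f_T}T\xrightarrow{\sigma_i}\cZ$, we get $\underline{\Isom}_{\cX_T}(\sigma_1\circ f_T,\sigma_2\circ f_T)=f_T^{*}I=\spec_{\cX_T}(f_T^{*}\cA)$. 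The map induced by $-\circ f$ on these $\underline{\Isom}$-sheaves is, on global sections over $T$, the composite
\[
\Hom_{\cO_T\text{-alg}}(\cA,\cO_T)\xrightarrow{\,s\mapsto f^{\sharp}_T\circ s\,}\Hom_{\cO_T\text{-alg}}(\cA,f_{T,*}\cO_{\cX_T})=\Hom_{\cO_{\cX_T}\text{-alg}}(f_T^{*}\cA,\cO_{\cX_T}),
\]
where the last equality is the adjunction between base change and pushforward of quasi-coherent algebras. Since $f$ is faithfully flat, $f^{\sharp}_T$ is injective, so this composite is injective; as $\sigma_1,\sigma_2,T$ were arbitrary this says exactly that the relative diagonal of $-\circ f$ is a monomorphism, proving the first assertion.

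For the \emph{moreover} part I would argue that $-\circ f$ is fully faithful on $T$-points for every $T\to\cY$, which is equivalent to being a monomorphism of stacks. Keeping the notation above, the hypothesis that $f^{\sharp}_T\colon\cO_T\to f_{T,*}\cO_{\cX_T}$ is an isomorphism makes the displayed composite a \emph{bijection}; hence for all $\sigma_1,\sigma_2$ the map $\Hom(\sigma_1,\sigma_2)\to\Hom(\sigma_1\circ f_T,\sigma_2\circ f_T)$ is bijective, i.e. $\cW(T)\to\Hom_{\cY}(\cX,\cW)(T)$ is fully faithful, and so $-\circ f$ is a monomorphism.

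The computations are essentially formal, so I do not anticipate a serious obstacle. The two points that require care are (i) invoking a version of the algebraicity of Hom--stacks whose hypotheses match this setting (affine diagonal on $\cZ$, locally excellent base, affine stabilizers), and (ii) the identification $\underline{\Isom}_{\cX_T}(\sigma_1\circ f_T,\sigma_2\circ f_T)\cong f_T^{*}\underline{\Isom}_T(\sigma_1,\sigma_2)$ and the translation of the section--level comparison map into post--composition with $f^{\sharp}_T$ via the algebra adjunction $\Hom_{\cO_{\cX_T}\text{-alg}}(f_T^{*}\cA,-)=\Hom_{\cO_T\text{-alg}}(\cA,f_{T,*}-)$, since it is precisely here that both assertions reduce to injectivity, respectively bijectivity, of $f^{\sharp}_T$.
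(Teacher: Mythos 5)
Your proof is correct, and it runs on the same engine as the paper's: algebraicity and finite presentation of the Hom--stack via \cite{HR19}, plus the affine diagonal of $\cZ$, which makes the relevant Isom sheaf affine over $T$ so that everything reduces to properties of $f^{\sharp}_T$. The packaging differs, though. The paper treats faithfulness and fullness separately: faithfulness is deduced from fppf descent for the Isom sheaf (two $2$-isomorphisms agreeing after pullback along $\cX_T\to T$ must agree), while fullness is obtained by pulling back the diagonal of $\cZ$ along $(g_1,g_2)$ to get an affine scheme $\Delta_T=\spec_T(\cD)$ and producing a section of $\Delta_T\to T$ from the composite $\cO_T\to\cD\to f_{T,*}\cO_{\cX_T}\simeq\cO_T$, leaving to the reader the check that this section pulls back to the given $2$-isomorphism $\beta$. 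You instead identify, once and for all, the comparison map on Isom sheaves with post-composition by $f^{\sharp}_T$ under the adjunction $\Hom_{\cO_{\cX_T}\text{-alg}}(f_T^{*}\cA,-)=\Hom_{\cO_T\text{-alg}}(\cA,f_{T,*}-)$, so that injectivity of $f^{\sharp}_T$ gives faithfulness (hence representability by algebraic spaces) and bijectivity gives full faithfulness with the compatibility $f_T^{*}\alpha=\beta$ built in; this unifies the two halves and makes the paper's ``left to the reader'' step automatic. One shared caveat: both your argument and the paper's implicitly use surjectivity of $f$ (you via faithful flatness to get $f^{\sharp}_T$ injective, the paper via fppf descent), which is not literally among the stated hypotheses; it is forced by the hypothesis of the ``moreover'' part and is harmless in the application, where $f_*\cO_\cE\cong\cO_\cY$, but strictly speaking the first assertion needs it (e.g.\ it fails for $\cX=\emptyset$), so it would be worth flagging.
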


\begin{proof}We will use the shorthand $\Hom_{\cY}(\cY,\cZ) := \Hom_{\cY}(\cY,\cY\times_S\cZ) $ and $\Hom_{\cY}(\cE,\cZ) := \Hom_{\cY}(\cE,\cY\times_S\cZ) $.
    First of all, notice that $\Hom_{\cY}(\cX,\cZ)$ is an algebraic stack locally of finite presentation with affine diagonal, thanks to \cite[Thm. 1.2]{HR19}. To see that $- \circ f$ is faithful (therefore representable), we fix two objects $g_1,g_2:T\rightarrow \cZ$ of $\Hom_{\cY}(\cY,\cZ)$ for a scheme $T\to \cY$, and two morphisms $\alpha,\alpha':g_1 \rightarrow g_2$ laying over a morphism $T\rightarrow \cY$ such that the isomorphisms $\alpha$ and $\alpha'$ agree once pulled back to $T\times_\cY\cX$.  We now have that $\alpha=\alpha'$ as they agree fppf locally, and since $\cZ$ is an algebraic stack, $\operatorname{Isom}$ is a sheaf in the fppf topology.
    %This is equivalent to finding a dotted arrow $$
    %\begin{tikzcd}
    %& T \arrow[d, "{(g_1,g_2,(\alpha,\alpha'))}"] \arrow[ld, dashed] \\
%\cZ \arrow[r, "\Delta_{\Delta_{\cZ}}"'] & \cZ \times_{\cZ \times_S \cZ} \cZ                 \end{tikzcd}$$
%such that the diagram is commutative. If we denote by $\Delta^2_T$ the fiber product of the diagram above, this is equivalent to  saying that $\Delta^2_T \rightarrow T$ is an isomorphism, because the second diagonal is always a monomorphism. Therefore it is enough to prove it fppf-locally. Because we know that it is true up to precomposing with $f_T:\cX_T \rightarrow T$ which is an fppf morphism, it is enough to take a smooth atlas of $\cX_T$ and we are done. 

We now prove that $- \circ f$ is full assuming that $f^{\sharp}_T:\cO_{T}\rightarrow f_{T,*}\cO_{\cX_T}$ is an isomorphism for every $T\rightarrow \cY$.
Suppose given two objects $g_1,g_2:T\rightarrow \cZ$ and an isomorphism of functors $\beta:g_1 \circ f_T \rightarrow g_2 \circ f_T$ over a morphism $T\rightarrow \cY$, we want to find an isomorphism $\alpha:g_1 \rightarrow g_2$ such that such that $\alpha$ pulls back to $\beta$. This is equivalent to finding a dotted arrow 
$$
\begin{tikzcd}
\cX_T \arrow[r, "f_T"] \arrow[d, "g_1 \circ f_T"'] & T \arrow[d, "{(g_1,g_2)}"] \arrow[ld, dashed] \\
\cZ \arrow[r, "\Delta_{\cZ}"']                     & \cZ \times_S \cZ                       \end{tikzcd}
$$
such that the diagram commute. Notice that we are using a specific natural transformation $$(\Id_{g_1\circ f_T},\beta):\Delta_{\cZ} \circ (g_1\circ f_T) \rightarrow (g_1\circ f_T,g_2 \circ f_T)$$ to make the diagram commute. If we denote by $\Delta_{T}$ the fiber product of the diagram above, it is enough to prove that the morphism $d_T:\Delta_{T}\rightarrow T$ admits a section. We leave to the reader to check that the compatibilities with the natural transformations follow from faithfulness. Therefore we have a sequence of morphisms 
$$ \cX_T \rightarrow \Delta_T \rightarrow T$$
and, because $\cZ$ has affine diagonal, we know that $\Delta_T\rightarrow T$ is affine over $T$ , thus given by an $\cO_T$-algebra $\cD$. Therefore we need to construct a morphism of $\cO_T$-algebra $\cD \rightarrow \cO_T$ which is a section of the structural morphism $\cO_T \rightarrow \cD$. This follows easily by considering the sequence of structural sheaves
$$ \cO_T \rightarrow d_{T,*}\cO_{\Delta_T}\simeq\cD \rightarrow f_{T,*}\cO_{\cX_T}\simeq\cO_T,$$
where the last isomorphism is exactly $(f_T^{\sharp})^{-1}$.
\end{proof}
\begin{Lemma}\label{lemma_extension_on_open_and_etale_neigh_of_complement}
    Assume that $\cX$ and $\cY$ are two separated \textcolor{black}{tame} Deligne-Mumford stacks, and assume that there is a dense open substack $\cU\subseteq \cX$ with a morphism $f_\cU:\cU\to \cY$. Assume also that $\cX$ is normal, and that each point of $\cX\smallsetminus \cU$ has an \'etale neighbourhood where $f_\cU$ extends. Then $f_\cU$ extends to a morphism $f:\cX\to \cY$. 
\end{Lemma}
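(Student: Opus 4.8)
The plan is to construct $f$ by \'etale descent from the given local extensions, the only real content being that any two extensions of $f_\cU$ to a common \'etale neighbourhood agree \emph{canonically} over the non-dense locus. By hypothesis we may pick an \'etale cover $\{V_\alpha \to \cX\}$ of $\cX$ on each member of which $f_\cU$ extends; we include $\cU$ itself as one of the $V_\alpha$, with extension $f_\cU$. Write $g_\alpha\colon V_\alpha\to\cY$ for the chosen extension and $\theta_\alpha$ for the $2$-isomorphism between $g_\alpha|_{V_\alpha\times_\cX\cU}$ and the pullback of $f_\cU$. Since $\cX$ is normal and $V_\alpha\to\cX$ is \'etale, each $V_\alpha$ is normal; and since $\cU\subseteq\cX$ is dense and \'etale morphisms are open, $V_{\alpha\beta}\times_\cX\cU$ is dense in $V_{\alpha\beta}:=V_\alpha\times_\cX V_\beta$, which is again normal.

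First I would produce the transition $2$-isomorphisms. On $V_{\alpha\beta}$ consider $\cI_{\alpha\beta}:=\uIsom_{V_{\alpha\beta}}(g_\alpha|_{V_{\alpha\beta}},g_\beta|_{V_{\alpha\beta}})$, which is the pullback of $\Delta_\cY$ along $(g_\alpha,g_\beta)\colon V_{\alpha\beta}\to\cY\times\cY$. Because $\cY$ is a separated Deligne--Mumford stack, $\Delta_\cY$ is finite, hence $\cI_{\alpha\beta}\to V_{\alpha\beta}$ is finite. Over the dense open $V_{\alpha\beta}\times_\cX\cU$ it carries the section $\theta_\beta\circ\theta_\alpha^{-1}$. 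Taking the closure of the image of this section, with its reduced structure, yields a closed subscheme of $\cI_{\alpha\beta}$ which is finite and birational over the normal scheme $V_{\alpha\beta}$, hence an isomorphism onto $V_{\alpha\beta}$ by Zariski's main theorem --- the same normality argument used in the proof of \Cref{uniquenessGerbe}. The resulting section is an extension $\phi_{\alpha\beta}$, an isomorphism $g_\alpha|_{V_{\alpha\beta}}\cong g_\beta|_{V_{\alpha\beta}}$, and it is the \emph{unique} such extension of $\theta_\beta\circ\theta_\alpha^{-1}$, since two sections of the separated $V_{\alpha\beta}$-scheme $\cI_{\alpha\beta}$ agreeing on a dense open of the reduced scheme $V_{\alpha\beta}$ must coincide.

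Next I would verify the cocycle condition and compatibility with the $\theta_\alpha$. On $V_{\alpha\beta\gamma}$ the two sections $\phi_{\beta\gamma}\circ\phi_{\alpha\beta}$ and $\phi_{\alpha\gamma}$ of the finite, separated $V_{\alpha\beta\gamma}$-scheme $\uIsom(g_\alpha|_{V_{\alpha\beta\gamma}},g_\gamma|_{V_{\alpha\beta\gamma}})$ agree on the dense open lying over $\cU$, hence agree; likewise $\phi_{\alpha\beta}$ is compatible with $\theta_\alpha,\theta_\beta$ over $\cU$, hence everywhere by the same density argument. Since an algebraic stack satisfies descent for the \'etale topology, the descent datum $(g_\alpha,\phi_{\alpha\beta})$ glues to a morphism $f\colon\cX\to\cY$, and the $\theta_\alpha$ glue to an isomorphism $f|_\cU\cong f_\cU$. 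The main obstacle is precisely the extension of sections of the $\uIsom$-scheme across the locus $V_{\alpha\beta}\smallsetminus(V_{\alpha\beta}\times_\cX\cU)$; this is handled by the standard ``finite $+$ birational $+$ normal target $\Rightarrow$ isomorphism'' argument, which applies because $\cX$, and hence each $V_{\alpha\beta}$, is normal while $\cY$ is separated and Deligne--Mumford.
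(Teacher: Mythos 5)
Your argument is correct, and its skeleton is the same as the paper's: reduce the gluing of the local extensions to the statement that extensions across the complement of $\cU$ are unique (up to unique $2$-isomorphism) and then invoke \'etale descent for morphisms to the stack $\cY$. The difference is in how that uniqueness is obtained. The paper's proof is a two-line reduction that delegates the uniqueness statement to \cite[Lem.~7.2]{DH18}, whereas you prove it directly: you extend $\theta_\beta\circ\theta_\alpha^{-1}$ to a section of $\uIsom_{V_{\alpha\beta}}(g_\alpha,g_\beta)$ (finite over $V_{\alpha\beta}$ because $\Delta_\cY$ is finite for a separated Deligne--Mumford $\cY$) by taking the reduced closure of the section over the dense locus and applying the ``finite, representable, isomorphism over a dense open, normal target $\Rightarrow$ isomorphism'' argument, and you get uniqueness of that extension from separatedness of the $\uIsom$ plus density and reducedness. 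This is exactly the technique the paper itself uses in the second half of \Cref{uniquenessGerbe}, so your write-up effectively makes the citation self-contained; what the paper's route buys is brevity, while yours makes visible where normality of $\cX$ and separatedness of $\cY$ enter (and the cocycle and $\theta_\alpha$-compatibility checks, which the citation-based proof absorbs into the uniqueness statement). One cosmetic caveat: in the intended application the \'etale neighbourhoods $V_\alpha$, hence $V_{\alpha\beta}$ and $\cI_{\alpha\beta}$, are Deligne--Mumford stacks and algebraic spaces rather than schemes, so you should phrase the key step as a finite \emph{representable} birational morphism onto a normal stack, as the paper does in \Cref{uniquenessGerbe}; the argument is unchanged.
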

\begin{proof}It suffices to check that if the aforementioned extension exists, then it is unique, as if that's the case we can use descent. The extension is unique from \cite[Lem. 7.2]{DH18}.
\end{proof}
\section{An application to moduli of stable pointed curves of genus one}\label{section_application_to_the_case_of_curves}
For basic facts on moduli of curves that are used throughout this section, the reader can consult \cite[Chapter XII]{ACGII}.

For $n\geq 2$, let $\overline{\mathscr{M}}_{1,n}$ denote the moduli stack of stable $n$-marked curves of genus one. Let $\Delta_{0,n}$ be the irreducible, smooth boundary divisor consisting of curves which are obtained by gluing a $1$-marked genus one curve to the $n+1^{\rm th}$ marking of a stable $n+1$-marked genus zero curve. In other terms, the divisor $\Delta_{0,n}$ is the image of the gluing morphism
\[ \xi_{0,n}: \overline{\mathscr{M}}_{1,1} \times \overline{\mathscr{M}}_{0,n+1} \longrightarrow \overline{\mathscr{M}}_{1,n}. \]
Recall that the Hodge line bundle $\bE\to \overline{\mathscr{M}}_{1,1}$ can be defined as follows: if $\pi:\mathscr{C}\to \overline{\mathscr{M}}_{1,1}$ is the universal curve and $\sigma:\overline{\mathscr{M}}_{1,1} \to \mathscr{C}$ is the universal section, then $\bE:=\sigma^*\omega_{\pi}$, where $\omega_{\pi}$ denotes the relative dualizing sheaf. The following fact can be found in \cite[Remark after Proposition 21]{EG}.
\begin{Prop}\label{prop:iso m11}
    Assume that the characteristic of the base field is $\neq 2,3$. Then there is an isomorphism $\varphi:\overline{\mathscr{M}}_{1,1}\simeq \cP(4,6)$ such that the Hodge line bundle $\bE\to\overline{\mathscr{M}}_{1,1}$ is identified with $\cO_{\cP(4,6)}(1)$.
\end{Prop}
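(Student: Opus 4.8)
The plan is to identify $\overline{\mathscr{M}}_{1,1}$ with the stack of \emph{short Weierstrass data} and to read off the Hodge bundle from that description.

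First I would recall the Weierstrass presentation over an arbitrary base. Given a stable $1$-pointed genus one curve $\pi\colon C\to S$ with section $\sigma$, put $\bE_{C/S}:=\sigma^*\omega_\pi$, a line bundle on $S$. Cohomology and base change show that $\pi_*\cO_C(2\sigma)$ and $\pi_*\cO_C(3\sigma)$ are locally free of ranks $2$ and $3$ and commute with base change, the fibral input being Riemann--Roch on a smooth or nodal curve of arithmetic genus one with $\sigma$ in the smooth locus. Picking local generators $x$ of $\pi_*\cO_C(2\sigma)$ and $y$ of $\pi_*\cO_C(3\sigma)$ with the expected pole orders along $\sigma$, the six sections $1,x,x^2,x^3,xy,y^2$ of the rank-$6$ sheaf $\pi_*\cO_C(6\sigma)$ satisfy a relation which, because $\operatorname{char}k\neq 2,3$, can be completed to the short form $y^2=x^3+a_4x+a_6$. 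Tracking how $x$ and $y$ transform under the allowed changes of coordinates, one sees that $a_4$ and $a_6$ are canonical sections of $\bE_{C/S}^{\otimes 4}$ and $\bE_{C/S}^{\otimes 6}$, and that $(a_4,a_6)$ vanishes nowhere simultaneously precisely because no fiber is cuspidal. This yields a morphism $\varphi\colon\overline{\mathscr{M}}_{1,1}\to[(\bA^2\smallsetminus\{0\})/\Gm]=\cP(4,6)$, where $\Gm$ acts with weights $(4,6)$, together with a tautological identification $\varphi^*\cO_{\cP(4,6)}(1)\cong\bE$: on the atlas $\bA^2\smallsetminus\{0\}$ the coordinate $a_4$ is the universal $a_4$, of weight $4$, so $\bE^{\otimes 4}\cong\cO(4)$, while the direct computation $d(\lambda^2x)/(\lambda^3y)=\lambda^{-1}\,dx/y$ shows that the invariant differential trivializing $\bE$ over the atlas carries $\Gm$-weight exactly $1$, hence $\bE\cong\cO_{\cP(4,6)}(1)$.

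Next I would prove that $\varphi$ is an isomorphism by exhibiting an inverse: to a triple $(\cL,a_4,a_6)$ with $\cL$ a line bundle on $S$ and $(a_4,a_6)\in\oH^0(S,\cL^{\otimes 4})\oplus\oH^0(S,\cL^{\otimes 6})$ nowhere simultaneously zero, associate the subscheme of $\bP(\cO_S\oplus\cL^{\otimes 2}\oplus\cL^{\otimes 3})$ defined by $y^2z=x^3+a_4xz^2+a_6z^3$ with section $[0:1:0]$. One checks that this is a family of stable $1$-pointed genus one curves (smooth exactly over the invertibility locus of the discriminant $-16(4a_4^3+27a_6^2)$, nodal elsewhere, and never cuspidal), that the two constructions are mutually inverse up to canonical isomorphism, and that for such a plane-cubic family $dx/y$ generates $\pi_*\omega_\pi$ and identifies $\bE_{C/S}$ with $\cL$. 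Since $\overline{\mathscr{M}}_{1,1}$ and $\cP(4,6)$ are then identified compatibly with $\bE$ and $\cO(1)$, this is the assertion. As this is \cite[Remark after Prop. 21]{EG}, I would keep the exposition short and mostly point to the literature.

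The main obstacle is not a genuine difficulty but the care needed to make the Weierstrass presentation — and in particular the claim that the group of admissible coordinate changes of the short Weierstrass equation is the abelian group $\Gm$ — valid globally over \emph{any} base and including the nodal fibers, not just the smooth ones. It is exactly here, and when completing the square and the cube, that $\operatorname{char}k\neq 2,3$ is used; in characteristics $2$ and $3$ one must work with the long Weierstrass equation and the relevant group is no longer $\Gm$. Granting the presentation, the identification $\bE\cong\cO_{\cP(4,6)}(1)$ is forced: $\operatorname{Pic}(\cP(4,6))\cong\bZ$ is generated by $\cO(1)$, and the $\Gm$-weight of $\bE$ on the atlas $\bA^2\smallsetminus\{0\}$ was computed above to be $1$.
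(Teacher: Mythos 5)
Your proposal is correct and matches the paper, which offers no argument of its own for this statement but simply cites \cite{EG} (Remark after Proposition 21): the Weierstrass presentation you sketch --- $(C,\sigma)\mapsto(\bE,a_4,a_6)$ with $a_i\in\oH^0(\bE^{\otimes i})$, identifying $\overline{\mathscr{M}}_{1,1}$ with $[(\bA^2\smallsetminus\{0\})/\Gm]$ of weights $(4,6)$ and $\bE$ with the weight-one character $\cO_{\cP(4,6)}(1)$ --- is exactly the standard argument behind that citation, and you yourself propose to defer to the same reference.
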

In particular, we deduce that 
\[\pr_2:\Delta_{0,n} \simeq \cP(4,6)\times\overline{\mathscr{M}}_{0,n+1} \to \overline{\mathscr{M}}_{0,n+1}\]
is a weighted projective bundle. The normal bundle of $\Delta_{0,n}$ can be computed as follows: the isomorphism $\Delta_{0,n}\simeq \overline{\mathscr{M}}_{1,1} \times \overline{\mathscr{M}}_{0,n+1}$ determines two families of curves $\pi_1:\mathscr{C}\to \Delta_{0,n}$ and $\pi_2:\mathscr{D}\to \Delta_{0,n}$ of genus respectively $1$ and $0$, together with sections $\sigma:\Delta_{0,n}\to \mathscr{C}$ and $\sigma_{i}:\Delta_{0,n}\to\mathscr{D}$ for $i=1,\ldots,n+1$. Then we have
\[ \cN_{\Delta_{0,n}\vert\overline{\mathscr{M}}_{1,n}} = \sigma^*\omega_{\pi_1}^{\vee}\otimes \sigma_{n+1}^*\omega_{\pi_2}^{\vee} = \bE^{\vee} \otimes \sigma_{n+1}^*\omega_{\pi_2}^{\vee} . \]
In particular, applying \Cref{prop:iso m11} we get that the normal bundle of $\Delta_{0,n}$ is isomorphic to $\cO_{\cP(4,6)}(-1) \otimes \pr_2^*\bL_{n+1}$, where $\bL_{n+1}$ is the line bundle on $\overline{\mathscr{M}}_{0,n+1}$ given by the $n+1^{\rm th}$-marking. Therefore, we can apply \Cref{theo:main} to deduce the following.
\begin{Prop}
    Assume that the base field has characteristic zero and that $n\geq 2$. Then there exists a smooth Deligne-Mumford stack $\overline{\mathscr{M}}_{1,n}^*$ together with a closed embedding $\overline{\mathscr{M}}_{0,n+1}\hookrightarrow \overline{\mathscr{M}}_{1,n}^*$ and a contraction
    \[
    \begin{tikzcd}
        \Delta_{0,n} \ar[r] \ar[d] & \overline{\mathscr{M}}_{1,n} \ar[d, "f"] \\
        \overline{\mathscr{M}}_{0,n+1} \ar[r, "\iota"] & \overline{\mathscr{M}}_{1,n}^*
    \end{tikzcd}
    \]
    which realizes $\overline{\mathscr{M}}_{1,n}$ as a weighted blow up of $\overline{\mathscr{M}}_{1,n}^*$ along the center $\overline{\mathscr{M}}_{0,n+1}$.
\end{Prop}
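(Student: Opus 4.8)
The plan is to deduce the statement directly from \Cref{theo:main}, applied with $\cX=\overline{\mathscr{M}}_{1,n}$, $\cE=\Delta_{0,n}$, $\cY=\overline{\mathscr{M}}_{0,n+1}$, $\pi=\pr_2$ and $\cL=\bL_{n+1}$. Thus the proof is essentially a verification that the hypotheses of that theorem hold in the present geometric situation. First I would note that since the base field has characteristic zero, every separated Deligne-Mumford stack of finite type over it is automatically tame; in particular $\overline{\mathscr{M}}_{1,n}$ is a smooth, separated and tame Deligne-Mumford stack, and (since $n\ge 2$, so that $n+1\ge 3$) $\overline{\mathscr{M}}_{0,n+1}$ is a smooth projective variety, hence also smooth, separated and tame. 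The inclusion $\Delta_{0,n}\hookrightarrow\overline{\mathscr{M}}_{1,n}$ is the inclusion of a smooth Cartier divisor in a smooth stack, hence a regular closed embedding of codimension one.

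Next I would invoke the two geometric inputs recorded above. Composing the product decomposition $\Delta_{0,n}\simeq\overline{\mathscr{M}}_{1,1}\times\overline{\mathscr{M}}_{0,n+1}$ coming from the gluing morphism $\xi_{0,n}$ with the isomorphism $\varphi\colon\overline{\mathscr{M}}_{1,1}\simeq\cP(4,6)$ of \Cref{prop:iso m11} shows that $\pr_2\colon\Delta_{0,n}\simeq\cP(4,6)\times\overline{\mathscr{M}}_{0,n+1}\to\overline{\mathscr{M}}_{0,n+1}$ is a weighted projective bundle, whose fibers $\cP(4,6)$ are one-dimensional, and in particular positive-dimensional. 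For the normal bundle, the identity $\cN_{\Delta_{0,n}\vert\overline{\mathscr{M}}_{1,n}}=\bE^{\vee}\otimes\sigma_{n+1}^*\omega_{\pi_2}^{\vee}$ together with $\varphi^*\cO_{\cP(4,6)}(1)\cong\bE$ and the identification $\sigma_{n+1}^*\omega_{\pi_2}^{\vee}\cong\pr_2^*\bL_{n+1}$ yields $\cN_{\Delta_{0,n}\vert\overline{\mathscr{M}}_{1,n}}\cong\cO_{\cP(4,6)}(-1)\otimes\pr_2^*\bL_{n+1}$, which is exactly the hypothesis $\cN_{\cE\vert\cX}\cong\cO_{\cE}(-1)\otimes\pi^*\cL$ of \Cref{theo:main}.

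With all hypotheses in place, I would then apply \Cref{theo:main}: it produces a smooth, separated and tame Deligne-Mumford stack $\cZ$, which we name $\overline{\mathscr{M}}_{1,n}^*$, a closed embedding $\overline{\mathscr{M}}_{0,n+1}\hookrightarrow\overline{\mathscr{M}}_{1,n}^*$, and a morphism $f\colon\overline{\mathscr{M}}_{1,n}\to\overline{\mathscr{M}}_{1,n}^*$ exhibiting $\overline{\mathscr{M}}_{1,n}$ as a (regular) weighted blow-up of $\overline{\mathscr{M}}_{1,n}^*$ along the center $\overline{\mathscr{M}}_{0,n+1}$, with exceptional divisor $\Delta_{0,n}$, and such that the resulting square is a pushout in algebraic stacks. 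This is precisely the asserted diagram. Since the substantive work has already been carried out in \Cref{theo:main} and in assembling the inputs from the geometry of $\overline{\mathscr{M}}_{1,1}$, there is no real obstacle remaining; the only points deserving a word of care are the reduction to characteristic zero, which guarantees both tameness and the applicability of \Cref{prop:iso m11}, and the observation that \Cref{theo:main} allows the twisting line bundle $\cL$ to be nontrivial — which is needed here, as $\bL_{n+1}$ is a nontrivial line bundle on $\overline{\mathscr{M}}_{0,n+1}$.
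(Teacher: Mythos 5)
Your proposal is correct and follows essentially the same route as the paper: identify $\Delta_{0,n}\simeq\cP(4,6)\times\overline{\mathscr{M}}_{0,n+1}$ as a weighted projective bundle via the gluing map and \Cref{prop:iso m11}, compute $\cN_{\Delta_{0,n}\vert\overline{\mathscr{M}}_{1,n}}\cong\cO_{\cP(4,6)}(-1)\otimes\pr_2^*\bL_{n+1}$, and apply \Cref{theo:main}. The extra remarks on tameness in characteristic zero and on the allowance of a nontrivial twist $\cL$ are consistent with the paper's setup.
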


Let $\overline{\mathscr{M}}_{1,n}^{\rm{ps}}$ be the moduli stack of pseudo-stable $n$-pointed curves of genus one, introduced in \cite[Sec. 5]{Sch}. This is a smooth Deligne-Mumford stack that fits into the diagram
\[
\begin{tikzcd}
    \Delta_{0,n} \ar[r] \ar[d] & \overline{\mathscr{M}}_{1,n} \ar[d, "f"] \\
    \overline{\mathscr{M}}^{\rm{ps, cusp}}_{1,n} \ar[r, "\iota"] & \overline{\mathscr{M}}_{1,n}^{\rm{ps}}
\end{tikzcd}
\]
where $\overline{\mathscr{M}}^{\rm{ps, cusp}}_{1,n}$ denotes the locus of cuspidal curves (see \cite[Prop. 3.11]{CTV}). This locus is isomorphic to $\overline{\mathscr{M}}_{0,n+1}$ via the pinching morphism \cite[Sec. 2]{DLPV}
\[ \overline{\mathscr{M}}_{0,n+1} \longrightarrow \overline{\mathscr{M}}^{\rm{ps, cusp}}_{1,n} \]
that creates a cusp at the $n+1^{\rm{th}}$ marking of the genus zero curve. Therefore, we have that the moduli stack $\overline{\mathscr{M}}_{1,n}^{\rm{ps}}$ is also a contraction of $\overline{\mathscr{M}}_{1,n}$ whose exceptional divisor is $\Delta_{0,n}$, and that the induced morphism $\Delta_{0,n}$ factors through $\overline{\mathscr{M}}_{0,n+1}$. Then from the uniqueness part of \Cref{theo:main} we deduce the following.
\begin{theorem}\label{thm:elliptic}
    The moduli stack $\overline{\mathscr{M}}_{1,n}$ is a weighted blow-up with weights $4$ and $6$ of the moduli stack $\overline{\mathscr{M}}_{1,n}^{\rm{ps}}$ of pseudo-stable curves.
\end{theorem}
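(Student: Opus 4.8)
The plan is to identify the abstract contraction target $\overline{\mathscr{M}}_{1,n}^*$ furnished by \Cref{theo:main} with the moduli stack $\overline{\mathscr{M}}_{1,n}^{\rm{ps}}$ of pseudo-stable curves, and then to conclude by the uniqueness clause of \Cref{theo:main}. Throughout we work over a base field of characteristic zero, so that tameness of every stack in sight is automatic, and we take $n\geq 2$ as in the preceding proposition.

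First I would record what the preceding proposition gives: since $\pr_2\colon \Delta_{0,n}\simeq \cP(4,6)\times\overline{\mathscr{M}}_{0,n+1}\to \overline{\mathscr{M}}_{0,n+1}$ is a weighted projective bundle with fibers $\cP(4,6)$ and $\cN_{\Delta_{0,n}\vert\overline{\mathscr{M}}_{1,n}}\cong \cO_{\cP(4,6)}(-1)\otimes\pr_2^*\bL_{n+1}$, the hypotheses of \Cref{theo:main} are met (using \Cref{prop:iso m11} for the identification of the fibers and of the normal bundle), producing a smooth separated Deligne-Mumford stack $\overline{\mathscr{M}}_{1,n}^*$, a closed embedding $\overline{\mathscr{M}}_{0,n+1}\hookrightarrow \overline{\mathscr{M}}_{1,n}^*$, and a weighted blow-up $f\colon \overline{\mathscr{M}}_{1,n}\to \overline{\mathscr{M}}_{1,n}^*$ with center $\overline{\mathscr{M}}_{0,n+1}$ and exceptional divisor $\Delta_{0,n}$; as the fibers of the exceptional divisor are $\cP(4,6)$, this weighted blow-up has weights $4$ and $6$.

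Next I would verify that the stabilization morphism $c\colon \overline{\mathscr{M}}_{1,n}\to \overline{\mathscr{M}}_{1,n}^{\rm{ps}}$ fits into a contraction diagram of exactly the shape occurring in \Cref{theo:main}. Concretely: $\overline{\mathscr{M}}_{1,n}^{\rm{ps}}$ is a smooth separated Deligne-Mumford stack by \cite[Sec. 5]{Sch}; by \cite[Prop. 3.11]{CTV} the morphism $c$ is an isomorphism away from $\Delta_{0,n}$ and contracts $\Delta_{0,n}$ onto the closed cuspidal locus $\overline{\mathscr{M}}^{\rm{ps, cusp}}_{1,n}$; and the pinching morphism of \cite[Sec. 2]{DLPV} identifies $\overline{\mathscr{M}}^{\rm{ps, cusp}}_{1,n}\simeq \overline{\mathscr{M}}_{0,n+1}$, so that $c|_{\Delta_{0,n}}$ factors as $\Delta_{0,n}\xrightarrow{\pr_2}\overline{\mathscr{M}}_{0,n+1}\hookrightarrow \overline{\mathscr{M}}_{1,n}^{\rm{ps}}$ — the point being that replacing an elliptic tail by a cusp forgets precisely the $\cP(4,6)$-coordinate and retains only the genus zero component with its $n+1$ marked points. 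With this in hand, both $\overline{\mathscr{M}}_{1,n}^*$ and $\overline{\mathscr{M}}_{1,n}^{\rm{ps}}$ are smooth separated Deligne-Mumford stacks sitting in a contraction diagram for the same data $\Delta_{0,n}\hookrightarrow\overline{\mathscr{M}}_{1,n}$, $\pr_2\colon\Delta_{0,n}\to\overline{\mathscr{M}}_{0,n+1}$, so the uniqueness part of \Cref{theo:main} (which rests on the uniqueness in \Cref{teo_contraction_cms} and in \Cref{thm_DM_stack_is_determined_by_cms_and_codim1}) yields an isomorphism $\overline{\mathscr{M}}_{1,n}^*\simeq\overline{\mathscr{M}}_{1,n}^{\rm{ps}}$ commuting with $f$, $c$, and the two closed embeddings of $\overline{\mathscr{M}}_{0,n+1}$; hence $c$ is the weighted blow-up of $\overline{\mathscr{M}}_{0,n+1}\subset\overline{\mathscr{M}}_{1,n}^{\rm{ps}}$ with weights $4$ and $6$.

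The main obstacle I expect is the compatibility assertion in the second step: one must check that the birational morphism contracting elliptic tails to cusps, read off on the divisor $\Delta_{0,n}$, genuinely coincides with the projection $\pr_2$ coming from the product decomposition $\Delta_{0,n}\simeq\overline{\mathscr{M}}_{1,1}\times\overline{\mathscr{M}}_{0,n+1}$ and from the identification $\overline{\mathscr{M}}_{1,1}\simeq\cP(4,6)$ of \Cref{prop:iso m11}. This is a direct but careful inspection of the modular interpretations of the pinching and gluing morphisms rather than a formal consequence of the earlier results; everything else is bookkeeping on top of \Cref{theo:main}.
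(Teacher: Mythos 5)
Your proposal is correct and follows essentially the same route as the paper: apply \Cref{theo:main} (via the preceding proposition on $\Delta_{0,n}\simeq\cP(4,6)\times\overline{\mathscr{M}}_{0,n+1}$ and its normal bundle) to obtain the contraction $\overline{\mathscr{M}}_{1,n}^*$, observe via \cite[Prop. 3.11]{CTV} and the pinching identification $\overline{\mathscr{M}}_{0,n+1}\simeq\overline{\mathscr{M}}^{\rm{ps,cusp}}_{1,n}$ that $\overline{\mathscr{M}}_{1,n}^{\rm{ps}}$ fits the same contraction diagram, and conclude by the uniqueness clause of \Cref{theo:main}. The compatibility of the contracted map on $\Delta_{0,n}$ with $\pr_2$, which you flag as the delicate point, is exactly what the paper asserts when it says the induced morphism on $\Delta_{0,n}$ factors through $\overline{\mathscr{M}}_{0,n+1}$.
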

\begin{Remark}
    \Cref{thm:elliptic} can be generalized to the birational morphisms appearing in Smyth's alternative compactifications of the moduli stack $\overline{\mathscr{M}}_{1,n}$ (see \cite{Smy11}). In particular, at least for $n=3,4,5, 6$, our criterion can be used for proving that the morphisms relating the different Smyth's compactifications can all be described as weighted blow-ups. This in turn gives a way to compute the integral Chow rings of these stacks, leveraging the formulas contained in \cite{AO}. This application can be found in \cite{BDL}.
\end{Remark}
\appendix
\section{The $\cO(m)$ case, by Stephen Obinna}\label{section_appendix}
We will work over an algebraically closed field of characteristic 0. Let $\cX$ be a smooth and separated Deligne-Mumford stack with a Cartier divisor $\cE\to \cX$ such that $\cE$ is also a fibration $g:\cE\to \cY$ of positive dimensional weighted projective stacks $\cP(a_1,\dots,a_r)$  over a smooth and separated Deligne-Mumford stack $\cY$.
Let $\cN=\mathcal{O}_{\cX}(\cE)|_{\cE}$ 
and assume that on every fiber we have $\cN_x\cong \mathcal{O}_{\cE_x}(-m)$.
We can then take the following rootstacks:
$\cX_m=\cX(\sqrt[m]{\cE})$, and
$\cE_m=\cE(\sqrt[m]{\cN})$.
This allows us to state the main theorem:

\begin{Teo}\label{thm_appendix}
There exists a $\bmu_m$ gerbe $\cY_m\to \cY$ as well as a smooth and separated Deligne-Mumford stack $\cZ_m$ completing the diagram below, and such that $\cX_m$ can be recovered as a weighted blow-up of $\cZ_m$ at $\cY_m$ with exceptional divisor $\cE_m$.
\end{Teo}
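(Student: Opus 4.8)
The plan is to deduce \Cref{thm_appendix} from the main theorem \Cref{theo:main} applied to the closed embedding $\cE_m\hookrightarrow\cX_m$ over a suitable $\mu_m$-gerbe $\cY_m\to\cY$. Since we work in characteristic $0$, all the stacks involved are automatically tame, so tameness requires no check. The divisor $\cE$ is smooth (it is a weighted projective bundle over the smooth stack $\cY$, hence smooth), so the root stack $\cX_m=\cX(\sqrt[m]{\cE})$ along the smooth Cartier divisor $\cE$ is again smooth and separated, and $\cE_m=\cE(\sqrt[m]{\cN})$ is smooth and separated since it is a $\mu_m$-gerbe over the smooth stack $\cE$. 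Inside $\cX_m$, the vanishing locus of the tautological section of the tautological root line bundle $\cL_{\mathrm{taut}}$ (with $\cL_{\mathrm{taut}}^{\otimes m}\cong p^*\cO_\cX(\cE)$ for $p\colon\cX_m\to\cX$) is an effective Cartier divisor which is canonically identified with $\cE_m$, and whose normal bundle is $\cN_{\cE_m/\cX_m}\cong\cL_{\mathrm{taut}}|_{\cE_m}$, an $m$-th root of $q^*\cN$, where $q\colon\cE_m\to\cE$; I would record this as a standard fact about root stacks.

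Next I would globalize the fiberwise hypothesis on $\cN$. Let $\pi\colon\cE\to\cY$ be the weighted projective bundle and $\cO_\cE(1)$ its tautological degree-one line bundle. Since $\pi_*\cO_\cE\cong\cO_\cY$ and $\cN\otimes\cO_\cE(m)$ restricts trivially to every fiber, cohomology and base change give $\cN\cong\cO_\cE(-m)\otimes\pi^*\cL$ for some $\cL\in\Pic(\cY)$. I would then set $\cY_m:=\cY(\sqrt[m]{\cL})$, a $\mu_m$-gerbe over $\cY$, with tautological root line bundle $\cM$ satisfying $\cM^{\otimes m}\cong(\text{pullback of }\cL)$. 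The key observation is that $\cN=\cO_\cE(-m)\otimes\pi^*\cL$ differs from $\pi^*\cL$ by the $m$-th power $\cO_\cE(-1)^{\otimes m}$, and tensoring a line bundle by an $m$-th power induces a canonical equivalence of root stacks; combining this with the fact that forming a root stack along a pulled-back line bundle commutes with base change, one obtains
\[ \cE_m=\cE(\sqrt[m]{\cN})\;\simeq\;\cE(\sqrt[m]{\pi^*\cL})\;\simeq\;\cE\times_\cY\cY(\sqrt[m]{\cL})\;=\;\cE\times_\cY\cY_m. \]
Thus $\cE_m\to\cY_m$ is the base change of the weighted projective bundle $\cE\to\cY$, hence is itself a weighted projective bundle with the same (positive-dimensional) fibers $\cP(a_1,\dots,a_r)$, and $\cY_m$ is smooth and separated.

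It then remains to identify $\cN_{\cE_m/\cX_m}$ and invoke \Cref{theo:main}. Tracking $\cL_{\mathrm{taut}}|_{\cE_m}$, whose $m$-th power is $q^*\cN=q^*(\cO_\cE(-m)\otimes\pi^*\cL)$, through the equivalence above — which multiplies the root line bundle by the pullback of $\cO_\cE(1)$ — one finds $\cN_{\cE_m/\cX_m}\cong\cO_{\cE_m}(-1)\otimes\pi_m^*\cM$, where $\pi_m\colon\cE_m\to\cY_m$ is the weighted projective bundle above and $\cO_{\cE_m}(1)$ is its tautological degree-one line bundle. This is exactly the hypothesis of \Cref{theo:main} for $\cE_m\hookrightarrow\cX_m$ over $\cY_m$, so applying it produces a smooth separated (tame) Deligne-Mumford stack $\cZ_m$ with a closed embedding $\cY_m\hookrightarrow\cZ_m$ and a weighted blow-up $\cX_m\to\cZ_m$ with center $\cY_m$ and exceptional divisor $\cE_m$, fitting into the required (pushout) diagram together with the root-stack maps $\cX_m\to\cX$, $\cE_m\to\cE$ and $\cY_m\to\cY$. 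I expect the only delicate point to be the bookkeeping of line bundles through this chain of root-stack equivalences — in particular verifying the canonical identification $\cE_m\simeq\cE\times_\cY\cY_m$ and computing $\cN_{\cE_m/\cX_m}$ precisely — since once the hypotheses of \Cref{theo:main} are checked the conclusion is immediate.
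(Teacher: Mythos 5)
Your argument is correct, and it reaches the same endpoint as the paper (verify the hypotheses of \Cref{theo:main} for $\cE_m\hookrightarrow\cX_m$ over a $\mu_m$-gerbe $\cY_m$), but the way you produce $\cY_m$ and the normal bundle identity is genuinely different from the paper's. The paper constructs $\cY_m$ cohomologically: $\mu_m$-gerbes are classified by $\oH^2(-,\bmu_m)$, and the Leray spectral sequence for $\pi\colon\cE\to\cY$, together with $R^1\pi_*\bmu_m=0$ (torsion-freeness of the Picard group of weighted projective stacks, via proper base change), shows that the class of the root gerbe $\cE(\sqrt[m]{\cN})$, being fiberwise trivial, descends to a class in $\oH^2(\cY,\bmu_m)$; this gives an abstract gerbe $\cY_m$ with $\cE_m\cong\cE\times_\cY\cY_m$, and the normal bundle condition is then checked only fiberwise, again using torsion-freeness of the Picard groups of the fibers. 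You instead globalize the hypothesis first, writing $\cN\cong\cO_\cE(-m)\otimes\pi^*\cL$ by a cohomology-and-base-change (seesaw) argument, and take $\cY_m:=\cY(\sqrt[m]{\cL})$ explicitly; the identification $\cE_m\cong\cE\times_\cY\cY_m$ and the formula $\cN_{\cE_m\vert\cX_m}\cong\cO_{\cE_m}(-1)\otimes\pi_m^*\cM$ then follow from elementary bookkeeping with root gerbes (invariance under twisting by $m$-th powers, compatibility with base change). What your route buys is explicitness: you avoid the spectral sequence entirely and you hand \Cref{theo:main} its hypothesis in the exact global form in which it is stated, whereas the paper verifies only the fiberwise condition and leaves the (standard) fiberwise-to-global passage implicit. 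What it costs is the use of a globally defined tautological $\cO_\cE(1)$ on $\cE$ and the seesaw globalization of the fiberwise triviality of $\cN\otimes\cO_\cE(m)$; both are available because $\cE\to\cY$ is a weighted projective bundle in the paper's sense (a $\bG_m$-quotient of a weighted affine bundle), which is in any case required to invoke \Cref{theo:main}, so this is not a gap. The two constructions of $\cY_m$ agree up to isomorphism, since the class of $\cY(\sqrt[m]{\cL})$ in $\oH^2(\cY,\bmu_m)$ pulls back to the class of $\cE_m$.
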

$$\begin{tikzcd}
\cE_m \arrow[rr,""] \arrow[dr] \arrow[dd,""] &&  \cX_m \arrow[dr] \arrow[dd,dotted,"",pos=0.3] \\
&\cE \arrow[rr,"",pos=0.3] \arrow[dd,"\pi",pos=0.3] && \cX  \\
\cY_m \arrow[dr] \arrow[rr,dotted,"",pos=0.3] && \cZ_m\\
& \cY 
\end{tikzcd}$$

\begin{proof}The proof is essentially in two steps.

\textbf{Step 1.} There is a $\bmu_m$-gerbe $\cY_m\to \cY$ which fits in the diagram above, such that $\cE_m\cong \cY_m\times_\cY\cE$. 

Indeed, recall that $\bmu_m$ gerbes over a Deligne-Mumford stack $\cW$ are classified by the \'etale cohomology $\oH^2(\cW,\bmu_m)$. Consider $\pi:\cE\to \cY$ the projection. Since $\pi_*\cO_\cE=\cO_\cY$, we have that $\pi_*\bmu_m = \bmu_m$, where the left hand side is the push-forward of $\bmu_m$ on the \'etale site of $\cE$, and the right hand side is $\bmu_m$ on the \'etale site of $\cY$. Note that $R^1\pi_*\bmu_m=0$. Indeed, by the proper base change theorem in \'etale cohomology, it suffices to check that $\oH^1(\cP, \bmu_m)=0$, where $\cP$ is a weighted projective stack over an algebraically closed field $k$. This follows from the cohomology of the Kummer exact sequence, since the Picard group of $\cP$ has no torsion. 

Therefore the low degree terms of the Leray spectral sequence associated to $\pi$ and $\bmu_m$ yields an exact sequence
\[
0\to \oH^1(\cY, \bmu_m)\to \oH^1(\cE, \bmu_m)\to 0\to \oH^2(\cY, \bmu_m)\to \oH^2(\cE, \bmu_m)\xrightarrow{\alpha} \oH^0(\cY, R^2\pi_*\bmu_m).
\]
The morphism $\alpha$ is the restriction of a class on $\oH^2(\cE,\bmu_m)$ along the fibers of $\pi$. Therefore, to prove the desired statement, it suffices to check that the root gerbe $\cE(\sqrt[m]{\cN})$, which corresponds to a class $c\in \oH^2(\cE,\bmu_m)$, has a section when restricted to the geometric fibers of $\pi$. But this is true, as along the geometric fibers of $\pi$ the line bundle $\cN_x$ is the $m$-th power of a line bundle.

\textbf{Step 2.} We check that $\mathcal{N}_{\cE_m|\cX_m}$ restricts to $\cO(-1)$ along each fiber of $\pi$.

Observe that the fibers of $\cE_m\to \cY_m$ are weighted projective stacks, as it is true for $\pi$ and $\cE_m=\cY_m\times_\cY\cE$.
Let $\cN_m$ be the pull-back of $\cN$ to $\cE_m$.
By how root stacks along a divisor are constructed, and since $\cN=\cO_\cX(\cE)|_\cE$, we have that \[
\cN_{\cE_m\vert\cX_m}^{\otimes m}\cong (\cN_m)|_{\cE_{m}}.\] Or in other terms, $\cN_m$ is isomorphic to the $m$-th power of the normal bundle of $\cE_m$ in $\cX_m$. Then:
\begin{enumerate}
    \item $\cN$ restricts to  $\mathcal{O}(-m)$ along each fiber of $\cE\to \cY$, so $\cN_m$ restricts to  $\mathcal{O}(-m)$ along each fiber of $\cE_m\to \cY_m$, and
    \item the Picard group of the fibers of $\cE_m\to \cY_m$ is torsion free.
\end{enumerate}
Then $\cN_{\cE_m\vert
\cX_m}$ restricts to $\mathcal{O}(-1)$ along the fibers of $\cE_m\to \cY_m$.
Now \Cref{theo:main} applies.
\end{proof}

\bibliographystyle{amsalpha}
\bibliography{bibliography}
\end{document}